\documentclass[11pt]{article}
\usepackage{amsmath,amsthm,amsfonts,amssymb,bm,mathrsfs,wasysym}
\usepackage{epsfig}
\usepackage[usenames]{color}
\usepackage{verbatim}
\usepackage{hyperref}
\usepackage{multicol}
\usepackage{comment}
\usepackage{float}
\usepackage{graphicx}
\usepackage[utf8]{inputenc}

\topmargin 0in
\oddsidemargin .01in
\textwidth 6.5in
\textheight 9in
\evensidemargin 1in
\addtolength{\voffset}{-.6in}
\addtolength{\textheight}{0.22in}
\parskip \medskipamount
\parindent	0pt


\newtheorem{theorem}{Theorem}[section]

\numberwithin{equation}{section}
\newtheorem{lemma}[theorem]{Lemma}
\newtheorem{proposition}[theorem]{Proposition}
\newtheorem{corollary}[theorem]{Corollary}

\newtheorem{remark}[theorem]{Remark}

\numberwithin{equation}{section}


\def\Z{\mathbb{Z}}

\def\T{\mathcal{T}}

\def\S{\mathcal{S}}

\def\bE{\mathbb{E}}

\allowdisplaybreaks

\newcommand{\1}{{\text{\Large $\mathfrak 1$}}}

\def\bs{\backslash}

\renewcommand{\emptyset}{\varnothing}
\renewcommand{\phi}{\varphi}
\renewcommand{\epsilon}{\varepsilon}
\def\tilde{\widetilde}

\def\reff#1{(\ref{#1})}

\begin{document}
\title{\bf Time spent in a ball by a critical branching random walk}

\author{Amine Asselah \thanks{
LAMA, Univ Paris Est Creteil, Univ Gustave Eiffel, UPEM, CNRS, F-94010, Cr\'eteil, France \& New York University at Shanghai; amine.asselah@u-pec.fr} \and
Bruno Schapira\thanks{Aix-Marseille Universit\'e, I2M, CNRS UMR 7373, 13453 Marseille, France;  \&  Universit\'e Lyon 1, Institut Camille Jordan, CNRS UMR 5208, ; bruno.schapira@univ-amu.fr} 
}
\date{}
\maketitle
\begin{abstract}
We study a critical branching random walk on $\Z^d$.
We focus on the tail of the time spent in a ball, 
and our study, in dimension four and higher, sheds new light on the
recent result of Angel, Hutchcroft and Jarai \cite{AHJ}, 
in particular on the special features of the critical dimension four.
Finally, we analyse the number of walks transported 
by the branching random walk on the boundary of a distant ball. \\

\noindent \emph{Keywords and phrases.} Branching random walk; 
local times; range. \\
MSC 2010 \emph{subject classifications.} Primary 60G50; 60J80.
\vspace{0.2cm}
\end{abstract}

\section{Introduction}\label{sec-intro}
In this paper we study a critical branching random walk (BRW) on $\Z^d$.
Whereas the study of the volume of the range of random walks is a central 
object of probability theory, the range of branching random walks,
 in dimension larger than one, stayed in the shadows. 
Quite recently, Le Gall and Lin \cite{LGL1,LGL2} proved limit theorems
for the volume of the range, say $\mathcal R_n$, of a random walk indexed by a Galton-Watson tree 
conditioned on having $n$ vertices, as $n$ goes
to infinity. 
In particular they discovered that in dimension five and larger, 
$\mathcal R_n$
scales linearly, whereas in dimension four it scales like
$n/\log(n)$, and in dimension three and lower it scales 
like $n^{d/4}$. Thus with BRW one recovers the well-known trichotomy for the asymptotic behavior of the range 
of a simple random walk, going back to Dvoretzky and Erd\"os \cite{DE}, except that 
the critical dimension is now equal to four instead of two. 
Later, Zhu in a series of works~\cite{Zhu1,Zhu2,Zhu4,Zhu3} extended part of Le Gall and Lin's analysis to general offspring and jump distributions, 
and most notably brought into light   
the notion of {\it branching capacity}, which is associated with 
BRW just as the {\it electrostatic capacity}
is associated with random walk.  
Lalley and Zheng \cite{LZ11}, 
analyzed the occupation statistics at a fixed generation of the tree, 
and observed similar behaviors as for a simple random walk. 
Angel, Hutchcroft and Jarai in \cite{AHJ} 
studied the tail of the local times for the full tree, and 
discovered that the tail speed is 
exponential above the critical dimension (dimensions five and higher), but
stretched exponential in the critical dimension four, a fact which does not have natural counterpart in the random walk setting. 
One of our motivation for the present paper 
is to bring some light on this remarkable observation, and in particular 
explain the tail behaviour in dimensions four and higher.
When \cite{AHJ} follows a moment method, rooted in statistical mechanics,
our approach is probabilistic, and aims at developing an analogue of
excursion theory so useful to analyse random walks. Finally, to
emphasize the recent vigor of BRW studies in high dimensions,
let us mention \cite{BC12,LSS24} dealing with recurrence and transience of a discrete snake, some recent results 
on the electrostatic capacity of the range of a BRW \cite{BW20,BH21,BH22}, 
and others on Branching interlacements \cite{PZ19,Zhu18}, or on the range of tree-valued BRWs \cite{DKLT}.

Here, we consider one Euclidean ball centered at the origin, and study 
two objects:
(i) the tail of the time spent in this ball when the 
BRW starts at the origin;
(ii) the tail of the (rescaled) number of walks hitting the ball, 
when the BRW starts from far away.

To state our results, let us introduce the needed notation. We let $\mathbf {\mathcal T}$ be a critical 
Bienaym\'e-Galton-Watson tree (BGW tree for short), 
whose offspring distribution has a finite exponential moment. 
Consider $\{S_u, u\in \mathbf {\mathcal T}\}$ 
an associated tree-indexed random walk, 
which we view alternatively as a branching random walk, where
time is encoded by the tree $\mathcal T$ and
whose jump distribution is the uniform measure $\theta$ on the neighbors of the origin.
In other words, independent
increments $\{X(e)\}$ are associated to the edges of the tree, and if
$[\emptyset,u]$ is the sequence of edges between the root $\emptyset$ and
vertex $u$, then
\[
S_u=S_\emptyset+\sum_{e\in [\emptyset,u]}X(e).
\]
When $z\in \Z^d$, we let $\mathbb P_z$ be the law of the BRW
starting from $z$, i.e. conditioned on $\{S_\emptyset=z\}$, and simply write $\mathbb P$ when it starts from the origin. 
Given $\Lambda\subset \Z^d$, we define the time spent in $\Lambda$ by the BRW as 
\begin{equation}\label{def-times}
\ell_{\T}(\Lambda) := \sum_{u\in \mathcal T}
\1\{S_u \in \Lambda\}.  
\end{equation}
Let $B_r:=\{z\in \Z^d : \|z\|< r\}$, where $\|\cdot \|$ 
denotes the Euclidean distance, and
write $\Theta(f(t,r))$ for a function which is uniformly 
bounded from above and below by $f$, up to 
multiplicative positive constants (that may depend on the dimension).  Our main result reads as follows.

\begin{theorem} \label{theo.main}
One has uniformly in $r\ge 1$, and $t\ge 1$, 
\begin{equation}\label{cost.maintheo}
\mathbb P(\ell_{\T}(B_r) > t) = 
\Theta\Big(\frac{1}{\sqrt{\min(t,r^4)}}\Big) \times \left\{
\begin{array}{ll}
\exp \Big(-\Theta(t/r^4)\Big) & \text{if }d\ge 5 \\
\exp\Big(- \Theta( \sqrt{ t/r^4})\Big) & \text{if }d=4\\
(1+\frac t{r^4})^{-\frac{2}{4-d}} & \text{if }d=1,2,3.\\ 
\end{array}
\right.
\end{equation}
\end{theorem}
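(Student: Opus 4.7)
The plan is to start from the decomposition
\[
\mathbb P(\ell_\T(B_r)>t)=\sum_n \mathbb P(|\T|=n)\,\mathbb P(\ell_\T(B_r)>t\mid|\T|=n),
\]
combining the classical size-tail $\mathbb P(|\T|=n)\asymp n^{-3/2}$ with a spatial analysis of the conditioned tree-indexed walk. The essential scaling is that a critical BGW tree of size $n$ has depth $\Theta(\sqrt n)$ and its tree-indexed walk has spatial extent $\Theta(n^{1/4})$, so the threshold $r^4$ naturally separates the regime ``tree-walk fits inside $B_r$'' from ``tree-walk overflows $B_r$''.

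\textbf{Regime $t\le r^4$.} Since $\ell_\T(B_r)\le|\T|$, the upper bound $\mathbb P(\ell_\T(B_r)>t)\le \mathbb P(|\T|>t)\asymp t^{-1/2}$ is immediate. For the matching lower bound, condition on $|\T|\in[t,2t]$ (probability $\Theta(t^{-1/2})$); on that event $\sqrt{|\T|}\lesssim r^2$, and a standard confinement estimate for the tree-indexed walk gives positive probability that the whole tree-walk remains in $B_r$, forcing $\ell_\T(B_r)=|\T|>t$.

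\textbf{Regime $t>r^4$, $d\le 3$.} A local-limit description of a uniformly chosen vertex of a BGW conditioned on $|\T|=n$ yields $\mathbb E[\ell_\T(B_r)\mid|\T|=n]\asymp\min(n,\,r^d n^{(4-d)/4})$, which grows with $n$ in low dimensions. Choose $n_0:=(t/r^d)^{4/(4-d)}$ so that the mean matches $t$; the lower bound then follows by requiring $|\T|\ge n_0$ (cost $\asymp n_0^{-1/2}$) and noting that on a typical tree of this size the snake spends $\Theta(t)$ time in $B_r$. The matching upper bound requires summing conditional Markov estimates over $n\ge t$, together with a second-moment control on the fluctuations of the snake's occupation in $B_r$ above its conditional mean, and yields the prefactor $n_0^{-1/2}=r^{-2}(t/r^4)^{-2/(4-d)}$.

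\textbf{Regime $t>r^4$, $d\ge 4$ (the hard part).} Here $\mathbb E[\ell_\T(B_r)\mid|\T|=n]$ is bounded by $\Theta(r^4)$ (and decreasing in $n$ for $d\ge 5$), so enlarging the tree cannot raise the conditional mean past $r^4$, and exceeding $t\gg r^4$ becomes a genuine spatial large deviation. I would pass to the Laplace transform $v_z(\lambda):=1-\mathbb E_z[e^{-\lambda\ell_\T(B_r)}]$, which by the BGW branching recursion satisfies the discrete semilinear equation
\[
-\Delta v_z+\tfrac{\sigma^2}{2}v_z^2=\lambda\1_{B_r}(z)+O(v_z^3,\lambda v_z),
\]
with continuum limit $-\Delta v+cv^2=\lambda\1_{B_r}$. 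In $d\ge 5$ the quadratic nonlinearity localises $v$ near $B_r$ and produces rapid decay outside, yielding after inverse Laplace transform the exponential tail $r^{-2}e^{-\Theta(t/r^4)}$. In $d=4$ the would-be self-similar profile $|x|^{-2}$ is exactly discrete-harmonic, so a logarithmic boundary layer emerges and converts the exponential rate into the stretched exponential $\exp(-\Theta(\sqrt{t/r^4}))$. This critical $d=4$ case is the main obstacle, both for the PDE approach and for the alternative probabilistic route hinted at by ``excursion theory'' in the introduction: decompose $\ell_\T(B_r)$ over sub-BRWs entering $B_r$, whose number has a geometric-like tail in $d\ge 5$ by a branching-capacity transience estimate but only a log-critical tail in $d=4$. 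Lower bounds in both cases come from tilting the offspring law to plant enough well-localised sub-BRWs inside $B_r$.
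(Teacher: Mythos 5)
Your plan diverges from the paper exactly at the part that is genuinely hard, and there it contains real gaps rather than just different packaging.

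For $t\le r^4$ and for $d\le3$, $t>r^4$, your outline is essentially fine and close in spirit to the paper: the upper bound via $\ell_\T(B_r)\le|\T|$ is the paper's, and the low-dimensional case really is a first/second/third moment computation (the paper imports Lemma~4.3 of \cite{AHJ} and applies Paley--Zygmund, which matches your conditional-mean heuristic after $n_0=R^4$). The one caveat is your lower bound in the regime $t\le r^4$: conditioning on $|\T|\in[t,2t]$ and then invoking a ``standard confinement estimate'' for the conditioned discrete snake is not a one-liner; the paper instead avoids any conditioning on $|\T|$ by planting $\Theta(\sqrt t)$ frozen particles on $\partial B_{t^{1/4}/2}$ via Paley--Zygmund and growing them for $\Theta(\sqrt t)$ generations, which is a more elementary route.

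The serious issues are in the case $d\ge4$, $t>r^4$. Your primary route, passing to $v_z(\lambda)=1-\mathbb E_z[e^{-\lambda\ell_\T(B_r)}]$ and the semilinear recursion $-\Delta v+\tfrac{\sigma^2}{2}v^2\approx\lambda\1_{B_r}$, is essentially the moment/generating-function method of \cite{AHJ}. The paper's own Remark after Theorem~\ref{theo.main} states explicitly that it is \emph{unclear how to extend that method to extract the scaling of time in $r^4$}; \cite{AHJ} controls a fixed ball (or a point), and the content of the present theorem is precisely the uniform $t/r^4$ (respectively $\sqrt{t/r^4}$) rate with the $\min(t,r^4)^{-1/2}$ prefactor. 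Asserting that the continuum PDE ``yields after inverse Laplace transform the exponential tail $r^{-2}e^{-\Theta(t/r^4)}$'' assumes the conclusion: there is no indication how to get the constant-order rate in $\lambda r^4$, uniformly in $r$, out of this recursion, nor how the log-critical boundary layer in $d=4$ produces exactly the exponent $1/2$ in the stretched exponential.

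Your alternative ``excursion theory'' route is closer in spirit to what the paper actually does, but it is stated at a level of generality where the key technical inputs are invisible. The paper's upper bound is built on \emph{exponential moment} bounds for the number $|\eta_r|$ of frozen particles per wave, both from inside (Theorem~\ref{theo.inside}, relying on the martingale $M_n$ of~\eqref{def-Mar1} and the exponential moments of $|\mathcal T(B_r)|$ in Proposition~\ref{prop.Tr}) and from outside (Theorems~\ref{theo.outside5} and~\ref{theo.outside4}); the factor $1-\varepsilon$ in \eqref{outside3}/\eqref{outside4.3}, coming from transience, is what makes the geometric series over waves summable. In $d=4$ one must in addition truncate at scale $R$ and renormalise by $\log(R/r)$ (the set $\eta_{r,R}$ of~\eqref{etarR} and Theorem~\ref{theo.outside4}), since the untruncated $|\eta_r|/r^2$ has no exponential moments; this truncation, and the choice $R=r2^{\sqrt{t/r^4}}$, is exactly where the exponent $1/2$ comes from. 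None of that is in your sketch, and a ``geometric-like tail vs.\ log-critical tail'' dichotomy does not by itself produce these rates.

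Finally, the lower bound in $d=4$ is the most novel part of the paper and your proposed mechanism does not reach it. ``Tilting the offspring law to plant sub-BRWs inside $B_r$'' is not what is used, and it is hard to see how it would give a stretched-exponential rate: a tilt to supercriticality produces exponential growth in the number of particles at cost exponential in the number of tilted generations, which is the correct heuristic in $d\ge5$ (and is realised in the paper as $\Theta(t/r^4)$ waves between $\partial B_r$ and $\partial B_{r/2}$), but it does not give the $\sqrt{t/r^4}$ rate in $d=4$. The paper's $d=4$ lower bound instead sends the BRW out to radius $R_I=r2^I$ with $I=C\sqrt{t/r^4}$, and then builds $\Theta(I)$ \emph{spines} returning across the dyadic shells, using Zhu's spine decomposition (Proposition~\ref{prop.Zhu.spine}) and its Markov property (Corollary~\ref{cor.spine.Markov}); each spine brings $\Theta(r^2 I)$ particles to $\partial B_{r/2}$, for a total of $\Theta(r^2I^2)=\Theta(t/r^2)$ entry points each contributing $\Theta(r^2)$ local time, at total cost $r^{-2}\exp(-\Theta(I))$. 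This multi-spine spatial construction is the idea missing from your proposal, and without it the matching lower bound in $d=4$ does not follow.
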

Note that when $t$ is of order $r^d$, then in the exponential we obtain a factor which is of order $r^{d-4}$, in dimension $5$ and higher, that is of the same order as the branching capacity of the ball $B_r$, as shown in~\cite{Zhu1}. 
This is not merely a coincidence, and a more general result in this direction has been shown in the recent paper~\cite{ASS23}. 
On the other hand, it remains an interesting open problem to prove the existence of a limiting constant, in front of the $t/r^4$ in the exponential. 
\begin{remark}
\emph{In fact the result in dimension $1$ and $2$ holds under the weaker assumption that the offspring distribution of the BGW tree has only a finite second moment, and a finite third moment in dimension three, instead of a finite exponential moment. Moreover, the proof in dimension $1,2,3$ can be done the same way as in \cite{AHJ}, using a simple moment method. 
}
\end{remark}

\paragraph{Heuristics.}
Let us explain at a heuristic level the difference between
dimension four on one hand and five and higher on the other hand.
In the latter case, to occupy $B_r$, a good strategy for the BRW is to produce 
{\bf waves}, which go from the boundary $\partial B_r$, 
up to the boundary of a larger concentric ball, 
say $\partial B_{2r}$, and back to $B_r$. 
Furthermore, (i) each wave starting with order $r^2$ BRWs 
hits the other boundary with order $r^2$ particles, at a constant cost; and 
(ii) when a single BRW starts on $\partial B_r$, 
it spends typically a time $r^2$ in $B_r$.
Thus $t/r^4$ waves typically produce a local time $t$ in $B_r$, and
the cost is that of creating these $t/r^4$ waves.

In dimension four, the situation is drastically different: conditioned
on coming from a distance $R$, a BRW typically brings $r^2\cdot \log(R/r)$ particles on
the boundary of $B_r$, so there is some advantage to coming from
far away (the $\log(R/r)$ factor is absent in $d\ge 5$). This can be seen
by saying that the process conditioned on hitting $B_r$ has a single
spine from which critical BRWs grow. We show that a {\it correct} scenario
(see Figure ~\ref{fig:dessin-d4} in Section~\ref{sec-LB4}) 
consists in producing $\log(R/r)$ spines, bringing a total number
of particles of order $r^2\cdot \log^2(R/r)$ and we equate this order
with $t/r^2$ to get the desired stretched exponential cost.

We note that the idea of using waves to study the tail of the local time distribution was also found useful in the recent paper~\cite{BHJ23}, which studies the thick points of branching Brownian motion and branching random walks.

Theorem~\ref{theo.main} generalizes the analysis of \cite{AHJ},
and its proof follows a probabilistic method. In this proof, we
encounter many interesting objects whose exponential moments
are studied, and they do present interest on their own.
In order to present these additional results, we now define more objects.
For a subset $\Lambda\subset \Z^d$, let $\mathcal T(\Lambda)$
be the set of vertices of $\mathcal T$ at which the BRW is in $\Lambda$, as well as at all its ancestral positions. In other words, 
\begin{equation}\label{def.Tr}
\mathbf {\mathcal T}(\Lambda) := 
\big\{u\in \mathbf{\mathcal T}: S_v\in \Lambda, \text{ for all }v\le u\},
\end{equation}
where we write $v\le u$ if $v$ is an ancestor of $u$ (by which we include the case $v=u$). 
Also, the set of vertices corresponding to hitting times of $\Lambda$, called here frozen particles, is denoted as 
\begin{equation}\label{etar}
\eta(\Lambda) : = \big\{u\in \mathbf {\mathcal T}: S_u \in \Lambda 
\text{ and }S_v \notin \Lambda \text{ for all }v< u\}, 
\end{equation}
where by $v<u$ we mean that $v$ is an ancestor of $u$, which is different from $u$. 
Our main new estimates concern the exponential moments of the 
number of frozen particles during each wave. There are two distinct
problems as whether we deal with starting points inside the ball, or outside it;  
the latter problem being the technical core of this paper.

Our first result concerns the case of a starting point 
lying inside the ball $B_r$, and 
we freeze walks as they reach its outer boundary $\partial B_r$. The result holds true irrespective of the dimension. 
For simplicity we write $\eta_r:=\eta(\partial B_r)$. 
 
\begin{theorem}\label{theo.inside}
Assume $d\ge 1$. There exist positive constants $c$ and $\lambda_0$ (only depending on the dimension), such that for any $0\le \lambda\le \lambda_0$, and any $r\ge 1$, 
\begin{equation}\label{inside2}
\sup_{x\in B_r}\, \mathbb E_x\Big[\exp(\lambda \frac{|\eta_r|}{r^2})\Big]
 \le \exp(\frac{c\lambda}{r^2}). 
\end{equation}
Moreover, 
\begin{equation}\label{inside1}
\sup_{x\in B_r} \, \mathbb E_x\Big[\exp(\lambda \frac{|\eta_r|}{r^2})\ \big| \ \eta_r\neq \emptyset\Big] \le \exp(c\lambda), 
\end{equation}
and 
\begin{equation}\label{inside3}
\sup_{x\in B_{r/2}} \, \mathbb E_x\Big[\exp(\lambda \frac{|\eta_r|}{r^2})
\Big] \le \exp\big( \frac{\lambda + c\lambda^2}{r^2}\big). 
\end{equation}
\end{theorem}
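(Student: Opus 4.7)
The plan is to base everything on a single functional equation for
\[
G(x,\lambda)\ :=\ \mathbb E_x\!\left[\exp\!\bigl(\lambda|\eta_r|/r^2\bigr)\right].
\]
Conditioning on the first generation of the BGW tree and using independence gives
\[
G(x,\lambda)\ =\ f\!\bigl(\mathbb E_X[\,G(x+X,\lambda)\,]\bigr),\qquad x\in B_r,
\]
with boundary data $G(\cdot,\lambda)=e^{\lambda/r^2}$ on $\partial B_r$, where $f(s)=\mathbb E[s^K]$ is the offspring generating function. The finite exponential moment hypothesis makes $f$ analytic in a neighborhood of $1$, with $f(1)=f'(1)=1$ and $f^{(m)}(1)/m!\le C^m$ for some $C$.

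The core of the proof is a Taylor expansion $G(x,\lambda)=1+\sum_{k\ge 1}\phi_k(x)\lambda^k$. Plugging it into the functional equation yields a hierarchy
\[
\phi_k(x)-\mathbb E_X[\phi_k(x+X)]\ =\ F_k(\phi_1,\ldots,\phi_{k-1})(x),
\]
with boundary $\phi_k=1/(k!\,r^{2k})$ on $\partial B_r$, where $F_k$ is a polynomial with nonnegative coefficients, a sum of products $\phi_{i_1}\cdots\phi_{i_m}$ with $m\ge 2$ and $i_1+\cdots+i_m=k$. For $k=1$ the source vanishes, so $\phi_1$ is discrete harmonic on $B_r$ with the constant boundary value $1/r^2$; by the maximum principle $\phi_1\equiv 1/r^2$. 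For $k\ge 2$, I would invert the discrete Laplacian using the Green function $\mathcal G_r$ of the random walk killed on $\partial B_r$. Since $\sup_{x\in B_r}\mathbb E_x[\tau_{\partial B_r}]=\Theta(r^2)$, one has $\|\mathcal G_r F_k\|_\infty\le Cr^2\|F_k\|_\infty$, and the crucial gain is that each summand of $F_k$ carries $m\ge 2$ factors, hence a prefactor $1/r^{2m}\le 1/r^4$, which exactly cancels the $r^2$ from Green-function inversion and leaves a net $1/r^2$. An induction on $k$ yields
\[
\phi_k(x)\ \le\ K_k/r^2\qquad\forall\, x\in B_r,
\]
with constants $K_k$ satisfying a convolution recursion whose generating function $H(\lambda)=\sum K_k\lambda^k$ solves an implicit equation of the form $H=\lambda+CH^2/(1-CH)$ and is therefore analytic in a disk around $0$ independent of $r$. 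Summing the series gives, for $\lambda\le\lambda_0$ and every $x\in B_r$,
\[
G(x,\lambda)\ \le\ 1+\frac{\lambda}{r^2}+\frac{C\lambda^2}{r^2(1-C\lambda)}\ \le\ \exp\!\Bigl(\frac{\lambda+c\lambda^2}{r^2}\Bigr),
\]
which is \eqref{inside3} (for $x\in B_{r/2}$, or even all of $B_r$), and implies \eqref{inside2} by absorbing the linear term into a looser constant.

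For the conditional bound \eqref{inside1} I would write
\[
\mathbb E_x\!\left[\exp(\lambda|\eta_r|/r^2)\,\big|\,\eta_r\neq\emptyset\right]\ =\ 1+\frac{G(x,\lambda)-1}{L(x)},\qquad L(x):=\mathbb P_x(\eta_r\neq\emptyset),
\]
and combine the bound $G(x,\lambda)-1\le C\lambda/r^2$ from the previous step with the uniform survival lower bound $L(x)\ge c/r^2$ valid for every $x\in B_r$. The latter follows from the Kolmogorov--Yaglom-type estimate $\mathbb P(\mathrm{height}(\mathcal T)\ge r^2)\asymp 1/r^2$ together with a diffusive lower bound on the probability that the spine (a random walk of length $r^2$) reaches $\partial B_r$, combined with criticality to handle starting points close to $\partial B_r$ where $L(x)\to 1$.

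The main obstacle is the combinatorial control of the $K_k$'s, i.e. verifying that the nonlinearity coming from the Taylor expansion of $f$ composed with Green-function inversion produces a convergent series with radius of convergence uniform in $r$. This hinges on the analyticity of $f$ at $1$ (provided by the exponential moment assumption) and on the uniform $O(r^2)$ bound on $\|\mathcal G_r\|_\infty$ which, together with the degree-$\ge 2$ structure of $F_k$, produces exactly one extra $1/r^2$ at each inductive step rather than a blowup in $r$. The sharp linear coefficient in \eqref{inside3} and \eqref{inside2} reflects the non-trivial identity $\phi_1\equiv 1/r^2$, itself a consequence of the criticality assumption $f'(1)=1$.
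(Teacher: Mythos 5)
Your approach to \eqref{inside2} and \eqref{inside3} is genuinely different from the paper's. The paper constructs the explicit quadratic martingale
\[
M_n= \sum_{u\in \mathcal Z_n(B_r)}\|S_u\|^2 + \sum_{u\in \eta^n_r}\|S_u\|^2-\bigl(|\mathcal T_n(B_r)|+|\eta^n_r|\bigr),
\]
whose limit $M_\infty$ pins $r^2|\eta_r|$ to $|\mathcal T(B_r)|$, and then feeds this into Proposition~\ref{prop.Tr} (exponential moments of $|\mathcal T(B_r)|/r^4$, itself proved by a generation-slicing plus H\"older trick). You instead write the functional equation $G=f(\bar G)$ for the MGF, solve it order by order in $\lambda$, observe the crucial identity $\phi_1\equiv 1/r^2$ (which is the same fact $\mathbb E_x[|\eta_r|]=\mathbf P_x(H_r<\infty)=1$ the paper uses in its proof of \eqref{inside3}), invert the discrete Laplacian via $\|\mathcal G_r\|_\infty=O(r^2)$, and close the recursion with a majorant series. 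Both arguments are correct; yours is self-contained (it never invokes Proposition~\ref{prop.Tr}), is the more classical branching-process route, and as you note it actually gives \eqref{inside3} on all of $B_r$, not just $B_{r/2}$, because the key ingredient $\mathbb E_x[|\eta_r|^2]\le 1+\sigma^2\mathbf E_x[H_r]\lesssim r^2$ holds uniformly on $B_r$. The paper's route is more machinery-heavy but that machinery (Proposition~\ref{prop.Tr}, and the martingale idea reused as $\widetilde M_n$ in Lemma~\ref{lem.phir2}) is needed anyway in Sections~\ref{sec-outside5}--\ref{sec-LB4}, so there is no net saving in the paper's context. Your treatment of \eqref{inside1} is identical to the paper's: the elementary conditional-expectation rewrite plus Lemma~\ref{lem.hit.inside}. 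One gap you should make explicit: the Taylor expansion and the term-by-term functional equation need an a priori justification that $\mathbb E_x[|\eta_r|^k]<\infty$ for every $k$ (the tree is critical, so $|\mathcal T|$ itself has no finite first moment); the clean fix is to work with $\eta_r^n:=\{u\in\eta_r:|u|\le n\}$, which has all moments finite by Lemma~\ref{lem.BGW2}, derive the recursion at level $n$, and pass to the limit by monotone convergence. You should also state that $f$ being a probability generating function makes all $c_m=f^{(m)}(1)/m!\ge 0$, which is what makes the majorant/comparison argument monotone and hence valid.
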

Actually~\eqref{inside1} easily follows from~\eqref{inside2}, once we know that the probability for the BRW to exit $B_r$ starting from a point
inside $B_r$, is {\it at least} of order $1/r^2$, see Lemma~\ref{lem.hit.inside}. On the other hand, \eqref{inside3} follows from \eqref{inside1}, once we 
know that the former probability, starting from a point in $B_{r/2}$ is 
{\it at most} of order $1/r^2$, see Proposition~\ref{prop.hit.inside}.

Thus the heart of the matter is to prove \eqref{inside2}. 
For this, we first show an analogous estimate for 
the total time spent inside the ball $B_r$ by a BRW killed on its boundary, 
whose set of particles is given by $\mathcal T(B_r)$, 
see \eqref{def.Tr} and Proposition~\ref{prop.Tr} below. 
Then we observe that $|\eta_r|$ and $|\mathcal T(B_r)|$ 
are linked via some natural martingales whose exponential moments are 
controlled by those of $|\mathcal T(B_r)|$. 
\begin{proposition}\label{prop.Tr}
Assume $d\ge 1$. There exist positive constants $c$ and $\lambda_0$, 
such that for any $0\le \lambda\le \lambda_0$, and any $r\ge 1$, 
$$
\sup_{x\in B_r}\ \mathbb E_x\Big[\exp\big(\lambda\frac{ 
|\mathbf{\mathcal T}(B_r)|}{r^4}\big)\Big]
\le\exp(c\frac{\lambda}{r^2}). 
$$ 
\end{proposition}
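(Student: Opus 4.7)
The plan is to set up a nonlinear functional equation for the exponential moment $\phi_\lambda(x):=\mathbb{E}_x[\exp(\lambda|\mathcal{T}(B_r)|/r^4)]$ and to dominate $\phi_\lambda$ pointwise by a supersolution built from the expected exit time of the underlying random walk. Conditioning on the first generation of the BGW tree, and using that $|\mathcal{T}(B_r)|\equiv 0$ whenever the root starts outside $B_r$, yields
\[
\phi_\lambda(x)=e^{\lambda/r^4}\,G(P\phi_\lambda(x)),\qquad x\in B_r,
\]
with $\phi_\lambda\equiv 1$ off $B_r$, where $G(s)=\mathbb{E}[s^N]$ is the offspring generating function (analytic in a neighborhood of $1$ by the finite exponential moment assumption) and $Pf(x)=\mathbb{E}[f(x+X)]$ is the one-step averaging operator of the single jump $X$. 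Writing $\phi_\lambda=1+\psi_\lambda$ and Taylor-expanding $G$ near $1$ (recall $G(1)=G'(1)=1$) converts this into $\psi_\lambda=g(P\psi_\lambda)$ on $B_r$, with $g(s):=e^{\lambda/r^4}G(1+s)-1$ smooth, increasing and convex, $g(0)\sim\lambda/r^4$.

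The heart of the argument is the construction of a supersolution. Let $\tau$ denote the first exit time from $B_r$ of the BRW's underlying random walk and set
\[
h(x):=c_0\lambda\,\mathbb{E}_x[\tau]/r^4\ \text{ for } x\in B_r,\qquad h\equiv 0\ \text{ otherwise.}
\]
The identity $(I-P)\mathbb{E}_\cdot[\tau]=\mathbf{1}_{B_r}$ gives $Ph=h-c_0\lambda/r^4$ on $B_r$, while the standard bound $\mathbb{E}_x[\tau]\le C_0 r^2$ yields $h\le c_0C_0\lambda/r^2$. Substituting into the expansion of $g$, the inequality $h\ge g(Ph)$ on $B_r$ reduces to
\[
(c_0-1)\lambda/r^4\ \ge\ \tfrac{1}{2}G''(1)(Ph)^2+O(\lambda\,Ph/r^4)+O((Ph)^3),
\]
whose right-hand side is of order $\lambda^2/r^4$ because $Ph\le h=O(\lambda/r^2)$. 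Choosing $c_0=2$ and $\lambda_0$ small enough (depending only on $G''(1)$ and $C_0$) makes the inequality hold on $B_r$.

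To transfer the supersolution bound to $\psi_\lambda$, I truncate at generation $n$: set $T_n:=|\mathcal{T}(B_r)\cap\{u:|u|\le n\}|$ and $\psi_\lambda^{(n)}:=\mathbb{E}_\cdot[\exp(\lambda T_n/r^4)]-1$. The same first-generation conditioning gives the recursion $\psi_\lambda^{(n+1)}=g(P\psi_\lambda^{(n)})$, and $\psi_\lambda^{(0)}(x)=e^{\lambda/r^4}-1\le 2\lambda/r^4\le h(x)$ on $B_r$ (using $\mathbb{E}_x[\tau]\ge 1$). A straightforward induction on $n$, using monotonicity of $g$ and $P$ together with the supersolution property, simultaneously yields $\psi_\lambda^{(n)}\le h$ and the well-posedness of the next step (one needs $P(1+\psi_\lambda^{(n)})\le 1+h$ to lie in the domain of $G$, guaranteed for $\lambda_0$ small). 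Monotone convergence $T_n\uparrow|\mathcal{T}(B_r)|$ upgrades this to $\psi_\lambda\le h$, hence
\[
\phi_\lambda(x)\le 1+h(x)\le 1+c_0C_0\lambda/r^2\le \exp(c\lambda/r^2)
\]
with $c=c_0C_0$, which is the claim. The main technical delicacy lies in the supersolution verification: the barrier must scale as $\lambda/r^2$, a scale dictated by the $r^2$ size of the Green's function $(I-P)^{-1}\mathbf{1}_{B_r}=\mathbb{E}_\cdot[\tau]$, and the verification amounts to balancing the linear source $\lambda/r^4$ arising from $e^{\lambda/r^4}$ against the quadratic contribution from $G''(1)$.
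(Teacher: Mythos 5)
Your proof is correct, and it takes a genuinely different route from the paper. The paper decomposes $|\mathcal T(B_r)|=\sum_{j<r^2}\Upsilon_j$, applies H\"older's inequality to reduce to a single $\Upsilon_j$, and then iterates generation-by-generation in blocks of $r^2$, using the contraction bound $\sup_{x\in B_r}\mathbb E_x[Z_{j+r^2}(B_r)]\le\rho<1$ together with the conditional exponential moment bound for $Z_n/n$ from Lemma~\ref{lem.BGW}. You instead write the exact nonlinear fixed-point equation $\phi_\lambda=e^{\lambda/r^4}G(P\phi_\lambda)$ in space and dominate $\psi_\lambda=\phi_\lambda-1$ by the explicit barrier $h=c_0\lambda\,\mathbf E_\cdot[H_r]/r^4$, verifying the supersolution property via a Taylor expansion of $G$ near $1$ and the identity $(I-P)\mathbf E_\cdot[H_r]=\mathbf 1_{B_r}$, with the truncation $T_n\uparrow|\mathcal T(B_r)|$ and monotone convergence handling finiteness. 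Both proofs ultimately hinge on the $r^2$ scale of $\mathbf E_x[H_r]$, but your argument is more conceptual and self-contained: it bypasses Lemmas~\ref{lem.BGW} and~\ref{lem.BGW2} entirely, produces a pointwise spatial bound $\phi_\lambda(x)\le 1+c_0\lambda\mathbf E_x[H_r]/r^4$ (which is sharper near $\partial B_r$ than the uniform $\exp(c\lambda/r^2)$), and makes transparent that the constraint on $\lambda_0$ comes from balancing the source $\lambda/r^4$ against the quadratic term $\tfrac12 G''(1)(Ph)^2\sim\lambda^2/r^4$. The paper's block decomposition, on the other hand, is closer in spirit to the martingale and iterated-wave arguments used in Sections~6--8, so it keeps the exposition stylistically uniform; but as a proof of this particular proposition, yours is arguably cleaner. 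One small point of care you handled correctly: the induction $\psi_\lambda^{(n)}\le h$ must be run simultaneously with the domain check $1+P\psi_\lambda^{(n)}\le 1+\|h\|_\infty$ (so $G$ is evaluated where it is finite), which is exactly why the truncation is needed rather than applying the fixed-point equation directly to $\phi_\lambda$.
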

In fact we prove a slightly stronger result 
in Proposition~\ref{prop.Tr+}, which provides an important 
additional factor $1/r^2$ in the tail distribution, 
needed in the proof of Theorem~\ref{theo.main}. 

To conclude the proof of Theorem~\ref{theo.main}, 
we need to control the exponential moments of $|\eta_r|$, 
when starting from a point outside $B_r$. 
This part is delicate, and this is where the role of the dimension 
comes into play. Roughly, the reason for this is that 
it could happen that many particles would freeze 
on $\partial B_r$, only after doing very large excursions away from it. 
In dimension five and higher the price to pay for 
these large excursions is too expensive for playing a significant role. 
As a consequence one can deduce 
a result which is similar to Theorem~\ref{theo.inside}.
Let us however emphasize the factor $(1-\varepsilon)$ 
appearing in~\eqref{outside3}, which 
comes from the transience of the random walk in 
dimension three and higher, 
and which guarantees that only a finite number of waves matter.

\begin{theorem}\label{theo.outside5}
Assume $d\ge 5$. 
There exist positive constants $c$, $r_0$ and $\lambda_0$ (only depending on the dimension), such that for any $0\le \lambda\le \lambda_0$, any $r\ge r_0$, 
and $x\in B_{2r}^c$,
\begin{equation}\label{outside1}
\mathbb E_x\Big[\exp(\lambda \frac{|\eta_r|}{r^2})\Big] \le \exp(\frac{c\lambda}{\|x\|^2}). 
\end{equation}
As a consequence,
\begin{equation}\label{outside2}
\sup_{x\in \partial B_{2r}} \mathbb E_x\Big[\exp(\lambda \frac{|\eta_r|}{r^2})
\ \Big|\ \eta_r\neq \emptyset \Big] \le \exp(c\lambda), 
\end{equation}
and there exists $\varepsilon\in (0,1)$, such that for any $r\ge r_0$, and any $0\le \lambda\le \lambda_0$, 
\begin{equation}\label{outside3}
\sup_{x\in \partial B_{2r}} \mathbb E_x\left[\exp\big(\lambda \frac{ 
|\eta_r|}{r^2}\big)\right] \le \exp\big(\frac{\lambda (1-\varepsilon)}{r^2}\big).
\end{equation}
\end{theorem}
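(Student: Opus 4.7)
\emph{Proof plan.} My approach is to control the Laplace transform
$$F(x):=\mathbb{E}_x\!\bigl[\exp(\lambda|\eta_r|/r^2)\bigr]$$
by means of an explicit supersolution. A first-generation decomposition, combined with the observation that a vertex of $\mathcal{T}$ landing in $\partial B_r$ contributes exactly $1$ to $|\eta_r|$ while its descendants contribute nothing, yields the fixed-point equation
$$F(x)=q\bigl(PF(x)\bigr)\text{ for }x\in B_r^c\setminus\partial B_r,\qquad F(x)=e^{\lambda/r^2}\text{ for }x\in\partial B_r,$$
with $F\to 1$ at infinity; here $q(s):=\mathbb{E}[s^{\xi}]$ is the offspring generating function (so $q(1)=q'(1)=1$ and $q''(1)=\sigma^2$) and $Pf(x):=\mathbb{E}[f(x+X)]$ is the random-walk transition.

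For \eqref{outside1} I would take the ansatz $g(x):=\exp\bigl(C\lambda/\|x\|^2\bigr)$ with $h(x):=1/\|x\|^2$ and a constant $C\ge 1$ to be tuned. Expanding $q$ around $1$ and the exponential around $0$,
$$g(x)-q(Pg(x))=C\lambda\,(h-Ph)(x)-\frac{\sigma^2}{2}\bigl(C\lambda\,Ph(x)\bigr)^2+O\bigl((\lambda h)^3\bigr).$$
The decisive input in dimension $d\ge 5$ is the quantitative superharmonicity
$$(h-Ph)(x)\ge \frac{c_d}{\|x\|^4},\qquad \|x\|\ge 1,$$
which follows from the continuous identity $\Delta(|x|^{-2})=2(4-d)|x|^{-4}<0$ via a discrete Taylor expansion (and a direct check at $\|x\|$ of constant size). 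Since $Ph\le h=1/\|x\|^2$, the negative quadratic contribution is at most $C^2\sigma^2\lambda^2/(2\|x\|^4)$, so for $\lambda\le 2c_d/(C\sigma^2)$ the positive first-order term dominates and $g\ge q(Pg)$ on $B_r^c\setminus\partial B_r$. Choosing $C$ large enough (uniformly in $r\ge 1$) that $g\ge e^{\lambda/r^2}$ on $\partial B_r$ makes $g$ a supersolution of the full problem. Iterating $F_0\equiv 1$ off $\partial B_r$, $F_{n+1}=q(PF_n)$ with the boundary condition preserved, monotonicity of $q$ and $P$ on functions $\ge 1$ yields $F_n\le g$ for every $n$, hence $F\le g$, which is \eqref{outside1}.

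The estimates \eqref{outside2} and \eqref{outside3} I would derive from \eqref{outside1} by probabilistic arguments. For \eqref{outside2}, writing
$$\mathbb{E}_x\!\bigl[\exp(\lambda|\eta_r|/r^2)\,\big|\,\eta_r\ne\emptyset\bigr]=1+\frac{\mathbb{E}_x\!\bigl[\exp(\lambda|\eta_r|/r^2)\bigr]-1}{\mathbb{P}_x(\eta_r\ne\emptyset)},$$
the numerator is $O(\lambda/r^2)$ by \eqref{outside1} at $\|x\|=2r$, and a many-to-two Paley--Zygmund argument gives $\mathbb{P}_x(\eta_r\ne\emptyset)\gtrsim 1/r^2$ (from $\mathbb{E}_x|\eta_r|=\Theta(1)$ and $\mathbb{E}_x[|\eta_r|^2]=O(r^2)$, since a non-empty $\eta_r$ typically carries of order $r^2$ first hits). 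For \eqref{outside3} the gain by a factor $(1-\varepsilon)$ exploits transience in $d\ge 3$: for $x\in\partial B_{2r}$ one has $\mathbb{E}_x|\eta_r|=\mathbb{P}_x^{\mathrm{RW}}(\tau_r<\infty)\le 2^{-(d-2)}<1$, so already the first-moment contribution lies strictly below $\lambda/r^2$; combined with $\mathbb{P}_x(\eta_r\ne\emptyset)\le O(1/r^2)$ and \eqref{outside2} to absorb higher-moment terms, this yields $\mathbb{E}_x[\exp(\lambda|\eta_r|/r^2)]\le \exp(\lambda(1-\varepsilon)/r^2)$ for some $\varepsilon>0$ and $\lambda$ small.

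\emph{Main obstacle.} The delicate step is verifying the uniform superharmonicity $(h-Ph)(x)\ge c_d/\|x\|^4$ for all $\|x\|\ge 1$: the continuous Laplacian only controls the asymptotic regime, and the discrete corrections of size $\|x\|^{-5}$ are not of lower order at $\|x\|$ of constant size, so one must verify $P(\|x\|^{-2})<\|x\|^{-2}$ directly at small $\|x\|$ in $\mathbb{Z}^d$, $d\ge 5$. A secondary subtlety is that the boundary constant $C$ for $g\ge e^{\lambda/r^2}$ on $\partial B_r$ depends mildly on $r$ (since $\|x\|/r>1$ slightly on the outer boundary), so \eqref{outside3} cannot be read off directly from \eqref{outside1} and requires the separate hitting-probability and transience inputs described above.
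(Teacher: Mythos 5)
Your supersolution/comparison approach via the first-generation fixed-point equation $F=q(PF)$ is a genuinely different and attractive route, but the decisive input you flag as the ``main obstacle'' is not merely delicate to check: the quantitative discrete superharmonicity $(h-Ph)(x)\ge c_d\|x\|^{-4}$ for $h(x)=\|x\|^{-2}$ and all $\|x\|\ge 1$ is in fact \emph{false}. For the simple random walk in $d=5$, at $x=(2,0,0,0,0)$ one has $h(x)=1/4$ whereas
$$Ph(x)=\frac{1}{10}\Big(\frac{1}{9}+1+8\cdot\frac{1}{5}\Big)=\frac{122}{450}>\frac{1}{4}=h(x),$$
so $Ph>h$ there; since $q(s)\ge s$ for $s\ge 1$, the inequality $g\ge q(Pg)$ is violated at that point whenever $r=1$, precisely a radius the theorem must cover. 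Moreover, for the general centered nearest-neighbor laws $\theta$ that the paper allows, the second-order Taylor expansion gives $(h-Ph)(x)\approx \big(1-4\,x^{T}\Sigma x/\|x\|^{2}\big)/\|x\|^{4}$, with $\Sigma$ the covariance of $\theta$; if the top eigenvalue of $\Sigma$ exceeds $1/4$, this is negative along the corresponding direction and $\|x\|^{-2}$ is not even asymptotically superharmonic, so the ansatz must be replaced by the anisotropic $(x^{T}\Sigma^{-1}x)^{-1}$. In either case one still needs a separate argument for small $\|x\|$, i.e.\ small $r$ --- exactly the reduction the paper performs at the start of the proof of Proposition~\ref{prop.phi.i}, disposing of $r\le r_0$ via Proposition~\ref{prop.Tr} and proving the estimate only for $r\ge r_0$.

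The paper sidesteps superharmonicity of any explicit power of $\|x\|$ altogether: it uses the exactly harmonic Green's function $G$ to build the martingale of Lemma~\ref{lem.phir2}, then a dyadic-annulus ``wave'' decomposition and an induction over annuli in Proposition~\ref{prop.phi.i}, which is robust to the geometry of $\theta$ and to small $r$. Your passage from \eqref{outside1} to \eqref{outside2} and \eqref{outside3} --- expanding the exponential, the second-moment lower bound on the hitting probability, and the transience input $\mathbb E_x|\eta_r|=\mathbf P_x(H_r<\infty)\le 1-\varepsilon$ on $\partial B_{2r}$ --- matches the paper's, which invokes Lemma~\ref{lem.hitting} and the first point of Lemma~\ref{lem.mom.etar}. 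If you repair the supersolution step as indicated (anisotropic or perturbed ansatz, valid only for $r\ge r_0$, plus a separate small-$r$ argument), you would obtain a short and genuinely alternative proof of~\eqref{outside1}.
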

We note that the restriction $r\ge r_0$ in the above theorem could be dropped and replaced by $r\ge 1$, at the cost of some mild additional work, but since we shall not need it, we refrain from giving more details.

In dimension four, the situation is more subtle, and large excursions start to play a decisive role.  
In particular, one can show that 
all exponential moments of $|\eta_r|/r^2$ are infinite, when starting for instance from $\partial B_{2r}$. 
Thus one needs to consider instead a truncated version of $\eta_r$ and renormalize it conveniently. 
We do this here, by killing the BRW once it reaches some 
large distance. To formulate our result, we define the deposition on $B_r$ of
trajectories which remain in $B_R$ for $R>2r$:
\begin{equation}\label{etarR}
\eta_{r,R} = \eta_r\cap \T(B_R).
\end{equation}
In other words $\eta_{r,R}$ is the set of vertices 
of $\mathcal T(B_R)$ corresponding to hitting times of $\partial B_r$. Then, we obtain the following. 
\begin{theorem}\label{theo.outside4}
Assume $d = 4$. There exist positive constants $c$, $r_0$ and $\lambda_0$, such that for any $0\le \lambda\le \lambda_0$, any $r\ge r_0$, $R\ge 2r$,  
and all $x\in B_R\setminus B_{2r}$,
\begin{equation}\label{outside4.1}
\mathbb E_x\Big[\exp \big( \frac{\lambda|\eta_{r,R}|}{r^2\log (R/r)} \big)\Big] \le \exp\big(\frac{c\lambda}{\|x\|^2\log (R/r)}\big). 
\end{equation}
Furthermore, if $R\ge 4r$,
\begin{equation}\label{outside4.2}
\sup_{x\in \partial B_{2r}} \mathbb E_x\Big[
\exp \big( \frac{\lambda|\eta_{r,R}|}{r^2\log (R/r)} \big)\ \Big|\ 
\eta_{r,R}\neq \emptyset \Big] \le \exp(c\lambda),  
\end{equation}
and there exists $\varepsilon \in (0,1)$, such that 
\begin{equation}\label{outside4.3}
\sup_{x\in \partial B_{2r}} \mathbb E_x\left[\exp\big( 
\frac{\lambda |\eta_{r,R}|}{r^2\log(R/r)}\big)\right] \le \exp\big(\frac{\lambda(1-\varepsilon)}{r^2\log(R/r)}\big).
\end{equation}
\end{theorem}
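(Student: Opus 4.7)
The plan is to focus on~\eqref{outside4.1}, from which \eqref{outside4.2} and \eqref{outside4.3} will be derived by essentially the same arguments as in the proof of Theorem~\ref{theo.outside5}. Set $\mu := \lambda/(r^2\log(R/r))$ and $\phi(x) := \mathbb{E}_x[\exp(\mu|\eta_{r,R}|)]$. By decomposing the BRW at the children of the root, $\phi$ satisfies the functional equation
\begin{equation*}
\phi(x) = f(P\phi(x)), \qquad x\in B_R\setminus\partial B_r,
\end{equation*}
where $f(s)=\mathbb{E}[s^N]$ is the offspring PGF (with $f'(1)=1$ and $f''(1)=\sigma^2$) and $P$ is the one-step operator of the underlying walk, with boundary data $\phi=e^\mu$ on $\partial B_r$ and $\phi=1$ on $B_R^c$. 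Writing $g:=\phi-1$ and expanding $f(1+u)=1+u+\tfrac{\sigma^2}{2}u^2+O(u^3)$, the Dirichlet representation on the annulus $D:=B_R\setminus\overline{B_r}$ takes the form
\begin{equation*}
g(x) = (e^\mu - 1)\,\mathbf P_x\big(\tau_{\partial B_r}<\tau_{B_R^c}\big) + \sum_{y\in D} G^D(x,y)\Big[\tfrac{\sigma^2}{2}(Pg(y))^2 + O((Pg(y))^3)\Big],
\end{equation*}
where $G^D$ is the Green function of the walk killed on $\partial B_r\cup B_R^c$.

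The heart of the proof is a bootstrap on the ansatz $g(y)\le K\lambda/(\|y\|^2\log(R/r))$ for $y\in\overline D$. Two ingredients are needed in $d=4$: (i) the harmonic measure bound $\mathbf P_x(\tau_{\partial B_r}<\tau_{B_R^c})\lesssim r^2/\|x\|^2$, which bounds the linear term by $C_1\lambda/(\|x\|^2\log(R/r))$; and (ii) the Green integral estimate
\begin{equation*}
\sum_{y\in B_R\setminus B_r}\frac{1}{\|x-y\|^2\,\|y\|^4}\lesssim \frac{\log(R/r)}{\|x\|^2},\qquad x\in B_R\setminus B_{2r},
\end{equation*}
obtained by splitting the sum into the three regions $\|y\|\le\|x\|/2$, $\|y\|\asymp\|x\|$, $\|y\|\ge 2\|x\|$; the logarithm is produced by the first region (intermediate scales $r\le\|y\|\le\|x\|$), and is precisely the critical-dimension mechanism explaining the $\log(R/r)$ rescaling in the theorem. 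Combined with $G^D(x,y)\le G(x,y)\lesssim 1/\|x-y\|^2$, the quadratic term is controlled by $C_2 K^2\lambda^2/(\|x\|^2\log(R/r))$, and the cubic remainder is negligible. Hence $g(x)\le (C_1+C_2K^2\lambda)\lambda/(\|x\|^2\log(R/r))$, so fixing $K=2C_1$ and choosing $\lambda_0$ small enough that $C_2K^2\lambda_0\le C_1$ closes the bootstrap. It is initialized by first establishing the estimate for the BRW truncated at a finite generation $n$ (where $\phi$ is trivially finite) and passing to the limit $n\to\infty$ by monotone convergence.

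For~\eqref{outside4.2}, I would write $\mathbb{E}_x[e^{\mu|\eta_{r,R}|};\eta_{r,R}\ne\emptyset] = \mathbb{E}_x[e^{\mu|\eta_{r,R}|}]-\mathbf{P}_x(\eta_{r,R}=\emptyset)$ and use~\eqref{outside4.1} for the first term, after which it suffices to prove the Paley--Zygmund bound $\mathbf{P}_x(\eta_{r,R}\ne\emptyset)\gtrsim 1/(r^2\log(R/r))$ for $x\in\partial B_{2r}$; this follows from $\mathbb{E}_x[|\eta_{r,R}|]\asymp 1$ (many-to-one) and $\mathbb{E}_x[|\eta_{r,R}|^2]\lesssim r^2\log(R/r)$ (many-to-two, using the same integral estimate). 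For~\eqref{outside4.3}, the factor $(1-\varepsilon)$ comes from sharpening the constant in (i) above: for $x\in\partial B_{2r}$, transience of the underlying walk yields $\mathbf{P}_x(\tau_{\partial B_r}<\tau_{B_R^c})\le 1-\varepsilon'$ strictly, and a careful bookkeeping of the constants in the bootstrap converts this gap into the required factor.

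The main technical obstacle is the tight handling of powers of $\log(R/r)$: the nonlinear correction produces exactly one factor $\log(R/r)$ via the integral in (ii), which must cancel exactly one of the two $\log(R/r)$'s in the denominator of $(Pg)^2$ under the ansatz, so as to leave only a subdominant $O(\lambda)$ correction to the linear term. This logarithmic accounting has no analog in $d\ge 5$ and any misalignment destroys the closure of the bootstrap; a precise computation of the Green function integral in the critical dimension is therefore essential.
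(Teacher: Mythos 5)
Your proposal is sound but takes a genuinely different route from the paper's. The paper proceeds probabilistically via a dyadic multi-scale decomposition: it splits the annulus into shells at scales $R_i=r2^i$, bounds a single wave across a shell by a Green's-function martingale together with second-moment estimates (Lemma~\ref{lem.phir4}), runs the delicate induction of Proposition~\ref{prop.phi.d4} with the recursively defined products $\alpha_i,\beta_i$ that track how the logarithmic renormalization accumulates across scales, and finally telescopes through the shells, invoking \eqref{inside3} for the return legs, to obtain \eqref{outside4.1}. You instead treat the stationary nonlinear equation $\phi=f(P\phi)$ for the moment generating function directly, pass to its Dirichlet representation on the annulus, and close a bootstrap on the pointwise ansatz $g(y)\lesssim \lambda/(\|y\|^2\log(R/r))$ using the four-dimensional Green integral $\sum_y \|x-y\|^{-2}\|y\|^{-4}\lesssim \log(R/r)/\|x\|^2$: the single log produced by this integral cancels exactly one of the two logs coming from $(Pg)^2$, which is precisely the critical-dimension mechanism that the paper encodes in the $\alpha_i$-product. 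Your route is more analytic and self-contained (it avoids the scale-by-scale bookkeeping and does not need Theorem~\ref{theo.inside}), at the cost of needing the finiteness/initialization step, which you correctly address via the monotone iteration $g_n=f(1+Pg_{n-1})-1$ with the ansatz propagated along the iterates; note also that the ansatz must be verified on the inner annulus $B_{2r}\setminus B_r$ as well, where the Green integral loses its log but the linear term alone already matches the ansatz. Two small tightenings: (a) for \eqref{outside4.2} your second-moment bound is slightly pessimistic --- for $x\in\partial B_{2r}$ one has in fact $\mathbb E_x[|\eta_{r,R}|^2]\lesssim r^2$ with no extra log (cf.\ the proof of Lemma~\ref{lem.hitting}), though your weaker bound still suffices; (b) for \eqref{outside4.3}, rather than vaguely "bookkeeping the constants in the bootstrap," the clean step is to substitute $x\in\partial B_{2r}$ once more into the Dirichlet representation after the ansatz is established: the linear term is $(e^\mu-1)\mathbf P_x(\tau_{\partial B_r}<\tau_{B_R^c})\le(1-\varepsilon')\mu(1+O(\mu))$ by transience (first point of Lemma~\ref{lem.mom.etar}), the quadratic term is $O(\lambda)\cdot\mu$, and the factor $1-\varepsilon$ follows for $\lambda_0$ small.
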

Here as well, we note that the restriction $r\ge r_0$ could be dropped and replaced by $r\ge 1$.

These estimates allow to consider starting points which 
are not contained in the ball (equivalently balls not centered at the origin). 
For instance when $d\ge 5$, then uniformly in $r\ge 1$, $t\ge 1$, and $\|x\|\ge 2r$, 
\begin{equation*}
\mathbb P_x(\ell_{\T}(B_r) > t) = 
\Theta\big(\frac{r^{d-4}}{\|x\|^{d-2}}\big) \times\exp \big(-\Theta(t/r^4)\big). 
\end{equation*}
Similar estimates could be proved when $\|x\|\le 2r$, 
depending on the value of $t$, and the same could also be done in lower dimension. 

The rest of the paper is organized as follows.
Section~\ref{sec-notation} sets the notation and
recall some basic results.
Section~\ref{sec-prelim} deals with some moment bounds for 
a Bienaym\'e-Galton-Watson process. We also 
recall there the spine decomposition of the BRW obtained by Zhu, 
and give bounds for the small moments of 
$|\eta_r|$ both when the starting point lies inside and outside the ball. 
Section~\ref{sec-LD} deals with Theorem~\ref{theo.main} in low dimensions,
and Section~\ref{sec-Tr} deals with the 
exponential moments of the size of the localized BRW, as 
presented in Proposition~\ref{prop.Tr}. Theorem~\ref{theo.inside} is proved
in Section~\ref{sec-inside}. The technical heart of the paper
spreads over three sections: Theorem~\ref{theo.outside5} dealing with $d\ge 5$ is proved in
Section~\ref{sec-outside5}, Theorem~\ref{theo.outside4} dealing with $d=4$ is proved in Section~\ref{sec-outside4}, and the conclusion of the proof of the upper bounds in Theorem~\ref{theo.main} for $d\ge 4$ is explained in Section~\ref{sec-UB4}. Finally, the lower
bounds in high dimensions are given in Section~\ref{sec-LB4}.

\section{Notation and basic tools}\label{sec-notation}

We let $\mathbf{\mathcal T}$ be a Bienaym\'e-Galton-Watson tree (BGW for short), 
with offspring distribution some measure $\mu$ on the set of integers. Throughout the paper we assume that $\mu$ is critical, in the sense that its mean is equal to one, and that it has a finite variance, which we denote by $\sigma^2$. 
When dimension is three, we assume furthermore that it has a finite third moment, and in higher dimension we assume that it has some finite exponential moment. 
For $u\in \mathbf{\mathcal T}$ 
we let $\xi_u$ be its number of children, so that 
$$
\mathbb E[\xi_u]= 1,  \quad \mathbb V\text{ar}(\xi_u)= \sigma^2, 
\quad \text{for all }u \in \mathbf{\mathcal T}.
$$

We denote the root of the tree by $\emptyset$. We write $|u|$ the generation of a vertex $u\in \mathcal T$, i.e. its distance to the root of the tree. 
We let $u\wedge v$ be the least common ancestor of $u$ and $v$, i.e. the vertex at maximal distance from the root, among the ancestors of 
both $u$ and $v$.  For $n\ge 0$, we let $Z_n$ be the number of vertices at generation $n$; in particular by definition $Z_0= 1$ (the process $\{Z_n\}_{n\ge 0}$ is often called a BGW process, or sometimes just a Galton--Watson process in the literature). We also let $\mathcal T_n:=\{u\in \mathcal T:|u|\le n\}$. 
It follows from our hypotheses on $\mu$, that for any $n\ge 1$, one has 
\begin{equation}\label{Zn}
\mathbb E[Z_n] = 1, \quad \text{and}\quad \mathbb V\text{ar}(Z_n) = n\sigma^2.
\end{equation}
We also recall Kolmogorov's estimate (see \cite[Theorem 1 p.19]{AN}): 
\begin{equation}\label{Kol}
\mathbb P(Z_n\neq 0) \sim \frac{2}{\sigma^2 n }, \quad \text{as }n\to \infty.
\end{equation}

Recall the definition \eqref{def.Tr} of $\mathcal T(\Lambda)$, for $\Lambda\subset
\Z^d$, and for $n\ge 1$, set
\begin{equation}\label{ZnLambda}
\mathcal Z_n(\Lambda)=\{u\in \mathcal T(\Lambda):\ |u|=n\}. 
\end{equation}
We define the outer boundary of a subset $\Lambda\subset \mathbb Z^d$ as 
$$\partial \Lambda := \{ z \in \Lambda^c : \exists y\in  \Lambda \text{ with }\|y-z\|= 1\}. $$ 
We let $(X_e)_e$ be a collection of independent and identically distributed random variables indexed by the edges of the tree, with 
joint law the uniform measure on the neighbors of the origin in $\mathbb Z^d$ 
(for a formal construction, see for instance \cite{Shi}). Then we define the branching random walk $\{S_u,\ u\in \mathbf {\mathcal T}\}$, as the tree-indexed random walk, which means that for 
any vertex $u$, $S_u-S_\emptyset$ is the sum of the
random variables $X_e$ along the unique geodesic path from $u$ to the root. 
We write $\bE$ the expectation with respect to the BRW. We let  
$\mathbf P$ be the law of the standard random walk (which we shall also abbreviate as SRW) $\{S_n\}_{n\ge 0}$ on $\Z^d$, starting from the origin. 
For $x\in \mathbb Z^d$, we let $\mathbf P_x$ be the law of the SRW starting from $x$. For $r > 0$, 
we denote by $H_r$ the hitting time 
of $\partial B_r$, for the SRW: 
\begin{equation}\label{Hr}
H_r:= \inf\{n \ge 0 : S_n \in \partial B_r\}.
\end{equation}
If $d\ge 3$, we let 
$G$ be the Green's function, which is defined for any $z\in \mathbb Z^d$, by 
$$G(z) = \sum_{n=0}^\infty  \mathbf P(S_n= z). $$ 
We recall that under our assumption on the jump distribution, there exists a constant $c_G>0$ (only depending on the dimension), such that (see Theorem 4.3.1 in \cite{LL10}):  
\begin{equation}\label{Green}
G(z) = c_G\cdot \|z\|^{2-d} + \mathcal O(\|z\|^{-d}).
\end{equation}
Furthermore, the function $G$ is harmonic on $\mathbb Z^d\setminus \{0\}$, in the sense that for all $x$ different from the origin, 
$G(x) = \mathbf E_x[G(S_1)]$. As a consequence, using the optional stopping time theorem, we deduce that for some positive constants 
$c$ and $C$, one has for any $r\ge 1$ and any $x\notin B_r$, 
\begin{equation}\label{Green2}
\frac{cr^{d-2}}{\|x\|^{d-2}}\le \frac{G(x)}{\sup_{z\in \partial B_r} G(z)} 
\le \mathbf P_x(H_r<\infty) \le \frac{G(x)}{\inf_{z\in \partial B_r} G(z)}
\le \frac{Cr^{d-2}}{\|x\|^{d-2}}.
\end{equation}

As in \cite{AHJ}, we shall also make use of Paley-Zygmund's inequality, which asserts that for any nonnegative random variable $X$ having finite second moment, 
and for any $\varepsilon \in [0,1)$, 
\begin{equation}\label{PZ1}
\mathbb P(X\ge \varepsilon \mathbb E[X])  \ge \frac {(1-\varepsilon)^2\cdot \mathbb E[X]^2}{\mathbb E[X^2]}. 
\end{equation}
A usefull variant of \reff{PZ1}, which comes after a little
algebra reads 
\begin{equation}\label{PZ}
\mathbb P\big(X\ge \varepsilon\cdot \mathbb E[X\mid X\neq 0]\big)  \ge \frac {(1-\varepsilon)^2\cdot \mathbb E[X]^2}{\mathbb E[X^2]}. 
\end{equation}
Finally, given two functions $f$ and $g$, we write $f\lesssim g$, if there exists a constant $C>0$, such that $f\le Cg$, and similarly for $f\gtrsim g$. 


\section{Preliminary results}\label{sec-prelim}
\subsection{Exponential moments for the BGW process}
In this subsection, we prove two elementary facts on the BGW process. Recall that we assume the offspring distribution $\mu$ to have mean one, and some finite exponential moment. Our first result shows that some exponential moment of $Z_n/n$ is finite, conditionally on $Z_n$ being nonzero 
(which is also known to converge in law to an exponential random variable with mean one as $n$ goes to infinity, see \cite[Theorem 2 p.20]{AN}).  
\begin{lemma}\label{lem.BGW}
There exist $\lambda>0$, such that 
\begin{equation}\label{Zn.exp}
\sup_{n\ge 1} \ \mathbb E\big[\exp(\frac{\lambda Z_n}{n})\ \big|\ Z_n\neq 0\big] < \infty. 
\end{equation}
\end{lemma}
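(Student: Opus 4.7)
The strategy is to pass to the generating function. Let $\phi(s) := \mathbb{E}[s^\xi]$, so that by the branching property $\mathbb{E}[s^{Z_n}] = \phi_n(s)$, where $\phi_n$ is the $n$-fold iterate of $\phi$. Decomposing according to $\{Z_n = 0\}$ and $\{Z_n \neq 0\}$, one has the identity
\[
\mathbb{E}\big[\exp(\lambda Z_n/n)\,\big|\,Z_n \neq 0\big] \;=\; 1 + \frac{\phi_n(e^{\lambda/n}) - 1}{\mathbb{P}(Z_n \neq 0)}.
\]
Combined with Kolmogorov's estimate \reff{Kol}, which gives $\mathbb{P}(Z_n \neq 0) \gtrsim 1/n$ for all $n \ge 1$, proving \reff{Zn.exp} reduces to showing that for some $\lambda > 0$,
\[
\sup_{n \ge 1}\; n \cdot \bigl(\phi_n(e^{\lambda/n}) - 1\bigr) \;<\; \infty.
\]

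I would establish this by analysing the recursion $w_{k+1} = \phi(1+w_k) - 1$, with initial value $w_0 = e^{\lambda/n} - 1 \le 2\lambda/n$. The finite-exponential-moment hypothesis on $\mu$ guarantees that $\phi$ is analytic in a neighborhood of $s=1$ on both sides, and a Taylor expansion using $\phi'(1) = 1$, $\phi''(1) = \sigma^2$ gives
\[
\phi(1+w) \;=\; 1 + w + \frac{\sigma^2}{2}\,w^2 + O(w^3), \qquad w \to 0.
\]
Setting $a_k := 1/w_k$ linearises the recursion into $a_{k+1} = a_k - \sigma^2/2 + O(w_k)$. Since $a_0 \ge n/(2\lambda)$ and the deterministic decrements $-\sigma^2/2$ telescope to $-k\sigma^2/2$ over $k$ steps, choosing $\lambda < 1/\sigma^2$ should keep $a_k$ positive of order $n$ throughout $k = 0,\dots,n$, which yields $w_n \lesssim 1/n$ as required.

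The technical heart is the inductive bookkeeping to avoid a feedback loop between the remainder $O(w_k)$ and the bound on $w_k$ itself. Concretely, I would assume inductively that $w_j \le C\lambda/n$ for all $j \le k$ (some constant $C$ to be chosen), telescope to obtain
\[
a_k \;\ge\; \frac{n}{2\lambda} - \frac{k\sigma^2}{2} - C'\sum_{j=0}^{k-1} w_j \;\ge\; \frac{n}{2\lambda} - \frac{n\sigma^2}{2} - C' C \lambda,
\]
and verify that for $\lambda$ small enough (independent of $n$) the right-hand side is at least $n/(C\lambda)$, closing the induction. Finitely many small values of $n$ are handled separately, since $\phi_n(s)$ is continuous in $s$ near $1$ for each fixed $n$, and the supremum in \reff{Zn.exp} can then be completed by enlarging the constant. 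The main (modest) obstacle is calibrating $C$ and $\lambda$ simultaneously so that the cubic Taylor remainder does not destabilise the linear decay of $a_k$; everything else is standard BGW arithmetic.
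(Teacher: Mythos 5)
Your proposal is correct, and the initial reduction (writing $\mathbb E[e^{\lambda Z_n/n}\mid Z_n\neq 0]=1+\frac{\phi_n(e^{\lambda/n})-1}{\mathbb P(Z_n\neq 0)}$ and invoking Kolmogorov) is exactly what the paper does. Where you diverge is in how to bound the iterate $\phi_n(e^{\lambda/n})$. You fix $n$, set $w_k:=\phi_k(e^{\lambda/n})-1$, and run the classical reciprocal linearisation $a_k=1/w_k$ over $k=0,\dots,n$ — this is the same device that proves Kolmogorov's estimate itself, and your sketch of the bookkeeping (telescope $a_{k+1}\ge a_k-\sigma^2/2-C'w_k$ under the hypothesis $w_j\le C\lambda/n$ for $j<k$, then verify $a_k\ge n/(C\lambda)$) is sound once one notes the inductive hypothesis should be over $j<k$, not $j\le k$, to avoid circularity; the displayed sum $\sum_{j=0}^{k-1}w_j$ already only uses $j<k$, so this is merely a slip of wording. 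The paper instead inducts directly on the iteration depth $n$: using the branching identity $\varphi_{n+1}(\lambda)=\varphi_n\bigl(n\log\varphi_1(\lambda/(n+1))\bigr)$, it shows that the \emph{family} of bounds $\varphi_n(\lambda)\le\exp\bigl((\lambda+c\lambda^2)/n\bigr)$ is self-reproducing under one composition step. Both arguments hinge on the same quadratic Taylor input $\varphi_1(u)=1+u+O(u^2)$ (equivalently $\phi(1+w)=1+w+\tfrac{\sigma^2}{2}w^2+O(w^3)$). Your route is the more traditional singularity analysis of branching iterates and needs the extra reciprocal change of variable plus a separate treatment of small $n$; the paper's route avoids the $1/w_k$ trick altogether, yields the cleaner uniform estimate $\varphi_n(\lambda)\le\exp\bigl((\lambda+c\lambda^2)/n\bigr)$ directly (which is then reused in the proof of Lemma~\ref{lem.BGW2}), and absorbs the small-$n$ cases into the base case $n=1$.
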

Note that the result is not new, and much stronger results are known, see for instance \cite{NV75,NV03}, but for reader's convenience 
we shall provide a direct and short proof here. 

\begin{proof}
Note that by \eqref{Kol}, there exists $c>0$, such that for all $n\ge 1$, 
\begin{align*}
\mathbb E\big[\exp(\frac{\lambda Z_n}{n})\ \big|\ Z_n\neq 0\big] & = 1+ \frac{\mathbb E\big[\exp(\frac{\lambda Z_n}{n})\big] - 1}{\mathbb P(Z_n\neq 0)}\\
& \le 1+ cn \left(\mathbb E\big[\exp(\frac{\lambda Z_n}{n})\big] - 1\right).  
\end{align*}
Thus it suffices to show that for some positive constants $c$ and $\lambda_0$, one has for all $\lambda\le \lambda_0$, and all $n\ge 1$,  
\begin{equation}\label{goal.Zn}
\varphi_n(\lambda):=\mathbb E\big[\exp(\frac{\lambda Z_n}{n})\big] \le \exp\big(\frac{\lambda+c\lambda^2}{n}\big). 
\end{equation}
We prove this by induction over $n$. Note that the result for $n=1$ follows from the fact that $Z_1$ is distributed as $\mu$, which 
has a finite exponential moment by hypothesis. 
Indeed, this implies that for some $c_0>0$, and all $\lambda$ small enough, 
\begin{align*}
\varphi_1(\lambda) = \mathbb E\big[\exp(\lambda Z_1)\big] & \le 1 + \lambda+ \lambda^2 \mathbb E\big[Z_1^2\exp(\lambda Z_1)\big] \le 1+  \lambda+ 
c_0 \lambda^2 \le \exp( \lambda+c_0 \lambda^2). 
\end{align*}
Now assume that~\eqref{goal.Zn} holds true for some $n$, and let us show it for $n+1$.   
Recall that conditionally on $Z_n$, $Z_{n+1}$ is distributed 
as a sum of $Z_n$ i.i.d. random variables with the same law as $Z_1$. 
Therefore, plugging the above computation yields 
$$
\varphi_{n+1}(\lambda)= \mathbb E\big[\exp(\frac{\lambda Z_{n+1}}{n+1})\big] = \mathbb E\left[ \varphi_1(\frac{\lambda}{n+1})^{Z_n}\right]\le  \varphi_n\Big(\frac{\lambda n}{n+1} + c_0\frac{\lambda^2 n}{(n+1)^2}\Big) . $$ 
Note that the induction hypothesis reads also as 
$n\log \varphi_n(\lambda)\le \lambda+ c\lambda^2$. Then 
\begin{equation}\label{wrong-1}
\begin{split}
(n+1)\log \varphi_{n+1}(\lambda) & \le 
\frac{n+1}{n}\Big(\frac{n}{n+1}\lambda+c_0\frac{n}{(n+1)^2}\lambda^2 +
c\big(\frac{\lambda n}{n+1} + c_0\frac{\lambda^2 n}{(n+1)^2}\big)^2\Big)\\
& \le \lambda +\frac{c_0}{n+1}\lambda^2+c\frac{n}{n+1}\lambda^2
+ 2cc_0\frac{\lambda^3}{n}+cc_0^2\frac{\lambda^4}{n^2}\\
& \le \lambda+ c\lambda^2-\frac{c-c_0}{n+1}\lambda^2+
2cc_0\frac{\lambda^3}{n}+cc_0^2\frac{\lambda^4}{n^2}. 
\end{split}
\end{equation}
Thus if we choose $c=2c_0$, and $\lambda$ small enough so that 
for any $n\ge 1$,
\[
4c_0 \frac{n+1}{n} \lambda+ 2c_0^2\frac{n+1}{n^2} \lambda^2<1,
\]
one obtains 
\begin{equation*}
\varphi_{n+1}(\lambda)\le \exp(\frac{\lambda +c\lambda^2}{n+1}). 
\end{equation*}
This establishes the induction step,  and concludes the proof of the lemma. 
\end{proof}
Our second result concerns the exponential moments for the total size of the BGW tree, up to some fixed generation, and is proved along the same lines. 
\begin{lemma}\label{lem.BGW2}
There exist positive constants $c$ and $\lambda_0$, such that for any $0\le \lambda\le  \lambda_0$, and any $n\ge 1$, 
\begin{equation*}
\mathbb E\big[\exp(\frac{\lambda |\mathcal T_n|}{n^2})\big] \le \exp(\frac{\lambda+c\lambda^2}{n}).   
\end{equation*}
\end{lemma}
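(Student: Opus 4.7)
The plan is to proceed by induction on $n$, following the structure of the proof of Lemma~\ref{lem.BGW}. Set $\psi_n(\lambda):=\mathbb E[\exp(\lambda|\mathcal T_n|/n^2)]$. Two ingredients will feed the recursion: the bound $\varphi_1(a)\le\exp(a+c_0a^2)$ valid for small $a$ (established at the start of the proof of Lemma~\ref{lem.BGW}), and the branching decomposition at the root, which gives $|\mathcal T_n|=1+\sum_{i=1}^{Z_1}|\mathcal T_{n-1}^{(i)}|$ with the $|\mathcal T_{n-1}^{(i)}|$ being i.i.d.\ copies of $|\mathcal T_{n-1}|$. Conditioning on $Z_1$ then yields the functional recursion
\[
\psi_n(\lambda)=e^{\lambda/n^2}\,\varphi_1(a_n),\qquad a_n:=\log\psi_{n-1}(\lambda(n-1)^2/n^2).
\]

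For the inductive step I would assume $\psi_{n-1}(\mu)\le\exp((\mu+c\mu^2)/(n-1))$ for all $\mu\in[0,\lambda_0]$, substitute $\mu=\lambda(n-1)^2/n^2$ (which lies in $[0,\lambda_0]$ whenever $\lambda\le\lambda_0$), and simplify to obtain
\[
a_n\le \frac{\lambda(n-1)}{n^2}+\frac{c\lambda^2(n-1)^3}{n^4}\lesssim \frac{\lambda}{n}
\]
for $\lambda$ small enough. Feeding this back and using the bound on $\varphi_1$ then produces
\[
\log\psi_n(\lambda)\le \frac{\lambda}{n^2}+a_n+c_0a_n^2\le \frac{\lambda}{n}+\frac{c\lambda^2(n-1)^3}{n^4}+c_0a_n^2,
\]
so the task reduces to showing that the last two terms are bounded by $c\lambda^2/n$ uniformly in $n\ge 1$.

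The only real obstacle is the constant bookkeeping required at this last stage; this mirrors the delicate accounting of~\eqref{wrong-1}. The crucial cancellation comes from expanding $(n-1)^3/n^3=1-(3n^2-3n+1)/n^3$, which rewrites
\[
\frac{c\lambda^2(n-1)^3}{n^4}=\frac{c\lambda^2}{n}-\frac{c\lambda^2(3n^2-3n+1)}{n^4};
\]
the subtracted quantity is of order $c\lambda^2/n^2$, which is the same order as $c_0a_n^2$. Choosing $c$ sufficiently large (e.g.\ $c=2c_0$, as in Lemma~\ref{lem.BGW}) and $\lambda_0$ correspondingly small makes this negative gain dominate the $c_0a_n^2$ error uniformly in $n\ge 1$, closing the induction. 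The base case is essentially the bound on $\varphi_1$ itself.
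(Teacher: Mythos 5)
Your proof is correct and follows essentially the same route as the paper's: induction on $n$, the branching decomposition at the root conditioning on $Z_1$, plugging the inductive hypothesis into the one-step estimate on $\varphi_1$, and closing via the cancellation coming from $(n-1)^3/n^3=1-O(1/n)$. The constant bookkeeping you outline, including the choice $c=2c_0$ and taking $\lambda_0$ small, mirrors the paper's own accounting.
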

\begin{proof}
We prove the result by induction on $n\ge 1$. The case $n=1$ has already been seen in the proof of Lemma~\ref{lem.BGW}, 
and only relies on the fact that $Z_1$ has a finite exponential moment by assumption. Assume now that it holds for some $n$, and let us prove 
it for $n+1$. Since conditionally on $Z_1$, $|\mathcal T_{n+1}|$ is the sum of $Z_1$ i.i.d. random variables distributed as $|\mathcal T_{n}|$, we deduce from the induction hypothesis, that for some $c$ and $\lambda_0$ one has for all $\lambda\le \lambda_0$, 
$$\mathbb E\Big[\exp(\frac{\lambda |\mathcal T_{n+1}|}{(n+1)^2})\ \big|\ Z_1\Big] 
\le \exp\big(\frac{\lambda\frac{n}{n+1}+c\lambda^2(\frac{n}{n+1})^3}{n+1}Z_1\big)\le \exp\Big(\frac{\lambda+c\lambda^2 - \frac{\lambda}{n+1} }{n+1}Z_1\Big) .$$
Integrating now both sides over $Z_1$, and using the result for $n=1$, gives 
$$\mathbb E\Big[\exp(\frac{\lambda |\mathcal T_{n+1}|}{(n+1)^2})\Big] \le \exp\Big(\frac{\lambda+c\lambda^2 - \frac{\lambda}{n+1}+\frac{c(\lambda + c\lambda^2)^2}{n+1} +\frac{c\lambda^2}{(n+1)^3}}{n+1}\Big) \le \exp\Big(\frac{\lambda+c\lambda^2 - \frac{\lambda - 2c\lambda^2+ \mathcal O(\lambda^3)}{n+1} }{n+1} \Big),$$
and the right-hand side is well smaller than $\exp(\frac{\lambda + c\lambda^2}{n+1})$, provided $\lambda_0$ is small enough. 
This concludes the proofs of the induction step, and of the lemma. 
\end{proof}


\subsection{Spine decomposition for the BRW}
We present here a spine decomposition of the BRW conditioned to hit a set, which was introduced by Zhu to derive upper bounds on hitting probabilities, see \cite{Zhu1, Zhu3}. We shall  
use it also later for proving the lower bound in Theorem~\ref{theo.main} 
in dimension four.

Following the terminology of \cite{Zhu1,Zhu3}, an
{\it adjoint BGW tree} is a BGW tree in which only the law 
of the number of children of 
the root has been modified, and follows the law $\tilde \mu$, given by $\tilde \mu(i) := \sum_{j\ge i+1} \mu(j)$, for $i\ge 0$. The associated 
tree-indexed random walk is the {\it adjoint BRW}. 
Then we define $k_\Lambda(x)$, for $x\in  \mathbb Z^d$, as the probability for an adjoint BRW starting from $x$ to hit $\Lambda$.  
Now, given an integer $n$ and 
a path $\gamma:\{0,\dots,n\} \to \mathbb Z^d$, 
we define, with $|\gamma|= n$,  
\begin{equation}\label{def.pLambda}
p_\Lambda(\gamma):= \prod_{i=0}^{|\gamma|-1}\theta\big(\gamma(i+1)-\gamma(i)\big) \cdot \big(1- k_\Lambda(\gamma(i))\big), 
\end{equation}
where we recall that $\theta$ is the uniform measure on the neighbors of the origin.
In other words, $p_\Lambda(\gamma)$ is the probability that a SRW starting from $\gamma(0)$ follows the path $\gamma$ during its first $n$ steps, 
when it is killed at each step with probability given by the function $k_\Lambda$ at its current position.

We next define the probability measures $\{\mu_\Lambda^z\}_{z\in \Lambda^c}$ on the integers by  
\begin{equation*}
\mu_\Lambda^z(m) := 
\sum_{\ell \ge 0} \mu(\ell + m +1) r_\Lambda(z)^\ell / \big(1-k_\Lambda(z)\big), \quad \text{for all }m\ge 0, 
\end{equation*}
where $r_\Lambda(z)$ is the probability that a BRW starting from $z$ does not visit $\Lambda$, conditionally on the root having only one child. 
We call biased BRW starting from $z$, a BRW starting from $z$, conditioned on the number of children of the root having law $\mu_\Lambda^z$.

Furthermore, a finite path $\gamma$, 
is said to go from $x$ to $\Lambda$, 
which we denote as $\gamma:x\to \Lambda$, 
if for $n=|\gamma|$, $\gamma(n)\in \Lambda$, 
and $\gamma(i)\notin \Lambda$ for all $i<n$. 
In other words this simply means that the path $\gamma$ is defined up 
to its hitting time of $\Lambda$ 
(note that it includes the possibility that $n=0$ and $\gamma(0) \in \Lambda$). We shall also later 
write for simplicity $\gamma:x\to y$, when $\Lambda$ is reduced to a single point $y$. Moreover, if $\Lambda\subset A$, we write $\gamma:A\to \Lambda$ when the path $\gamma$ is such that $\gamma(0)\in A$, and $\gamma$ goes from $\gamma(0)$ to $\Lambda$. 
Then given $x\in \mathbb Z^d$, and $\gamma:x\to \Lambda$, 
we call $\gamma$-biased BRW, the union of $\gamma$, together with for each $i\in \{0,\dots,|\gamma|-1\}$, 
a biased BRW starting from $\gamma(i)$, independently for each $i$, and starting from $\gamma(n)$ some usual independent BRW .

We are now in position to describe the law of the BRW starting
at some $x\in \mathbb Z^d$, and
conditioned on hitting $\Lambda$. 
For simplicity, we restrict ourselves to the part which intersects $\Lambda$, 
since this is the only one that is needed here. 
On the event $\{\ell_{\mathcal T}(\Lambda)>0\}$, 
one defines the first entry vertex, as the smallest vertex 
$u\in \mathcal T$ in the lexicographical order, for which $S_u\in \Lambda$. 
Then, denoting by $\overleftarrow{u}$  
the unique geodesic path in the BGW tree going from the root to 
the vertex $u$, we let $\Gamma=\Gamma(\mathcal T)$
be the path in $\mathbb Z^d$, 
made of the successive positions of the BRW along that path. 
Thus if $\overleftarrow{u} = (u_0=\emptyset, u_1,\dots,u_n=u)$, 
with $n=|u|$, then 
$$
\Gamma=\big(S_\emptyset=x,S_{u_1},\dots,S_{u_n}\big).
$$ 
Note that by definition $\Gamma$ 
is a path which goes from $x$ to $\Lambda$ in our terminology.  
The following result comes from \cite[Proposition 2.4]{Zhu3}. 

\begin{proposition}[\cite{Zhu3}] \label{prop.Zhu.spine}
Assume $d\ge 1$. Let $\Lambda\subset \Z^d$, 
and $x\in \mathbb Z^d$ be given. 
\begin{enumerate}
\item For any path $\gamma:x\to \Lambda$, one has 
\begin{equation}\label{zhu-1}
\mathbb P_x(\Gamma = \gamma, \ell_{\T}(\Lambda)>0)=p_\Lambda(\gamma).
\end{equation}
\item Furthermore, conditionally on $\{\Gamma=\gamma,\, 
\ell_{\T}(\Lambda)>0\}$, the trace of the BRW 
on $\Lambda$ has the same law as the trace of a $\gamma$-biased BRW. 
\end{enumerate}
\end{proposition}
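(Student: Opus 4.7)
My strategy is to enumerate the independent random choices that produce a prescribed spine $\gamma$, and then identify the conditional law of the subtrees hanging off it.

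Fix $\gamma=(\gamma(0),\dots,\gamma(n))$ with $\gamma(0)=x$, $\gamma(n)\in\Lambda$ and $\gamma(i)\notin\Lambda$ for $i<n$, and let $u_0=\emptyset,u_1,\dots,u_n$ be the associated vertices of $\mathcal T$. Since $u_n$ must be the first entry in the depth-first (lexicographic) order, the event $\{\Gamma=\gamma,\,\ell_{\mathcal T}(\Lambda)>0\}$ decomposes, independently across $i\in\{0,\dots,n-1\}$, into the following choices at $u_i$: an offspring count $\xi_i\sim\mu$; a rank $J_i\in\{0,\dots,\xi_i-1\}$ for the spine child $u_{i+1}$ among the children of $u_i$; the jump $\gamma(i+1)-\gamma(i)$ of that child; and, for each of the $J_i$ siblings of $u_{i+1}$ preceding it in lex order, the event that its whole subtree carries a BRW avoiding $\Lambda$, which has probability $r_\Lambda(\gamma(i))$ per sibling (by the very definition of $r_\Lambda$, as each such sibling's subtree is an unrestricted BRW conditionally on its jump). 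Summing the independent contributions of spine step $i$ over $(\xi_i,J_i)$ gives
\[
\theta(\gamma(i+1)-\gamma(i))\cdot\sum_{\xi\ge 1}\mu(\xi)\sum_{J=0}^{\xi-1}r_\Lambda(\gamma(i))^J=\theta(\gamma(i+1)-\gamma(i))\cdot\sum_{J\ge 0}\tilde\mu(J)\,r_\Lambda(\gamma(i))^J,
\]
and the rightmost sum is, by construction, the probability that an adjoint BRW starting from $\gamma(i)$ avoids $\Lambda$, i.e.\ $1-k_\Lambda(\gamma(i))$. Taking the product over $i$ produces $p_\Lambda(\gamma)$ and proves (1).

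For (2), I would observe that conditionally on $\{\Gamma=\gamma,\,\ell_{\mathcal T}(\Lambda)>0\}$ the residual randomness splits into mutually independent pieces: for each $i<n$, the $J_i$ left-sibling subtrees of $u_{i+1}$ (conditioned to avoid $\Lambda$ and hence contributing nothing to the trace on $\Lambda$), and the $M_i:=\xi_i-J_i-1$ right-sibling subtrees, which are unrestricted BRWs rooted at independent positions $\gamma(i)+X$ with $X\sim\theta$; together with the unrestricted BRW rooted at $u_n$. Marginalizing the conditional joint law of $(\xi_i,J_i)$ over $J_i$ yields
\[
\mathbb P(M_i=m\mid\Gamma=\gamma)=\frac{1}{1-k_\Lambda(\gamma(i))}\sum_{J\ge 0}\mu(J+m+1)\,r_\Lambda(\gamma(i))^J=\mu_\Lambda^{\gamma(i)}(m),
\]
which is precisely the root-offspring law of a biased BRW at $\gamma(i)$. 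Recognizing the family of $M_i$ right-sibling subtrees at $\gamma(i)$ as a biased BRW rooted there, and the subtree at $u_n$ as an unrestricted BRW at $\gamma(n)$, we see that the trace on $\Lambda$ coincides in distribution with that of a $\gamma$-biased BRW, proving (2).

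The one delicate step is the identity $\sum_{J\ge 0}\tilde\mu(J)r_\Lambda(z)^J=1-k_\Lambda(z)$: this is essentially the defining property of the adjoint BRW, and it is what collapses the double sum over $(\xi_i,J_i)$ into a clean product in (1) and makes the biased offspring law $\mu_\Lambda^z$ appear naturally upon marginalization in (2). Once this identification is in hand, the rest of the argument is just careful bookkeeping of the independent subtrees hanging off the spine.
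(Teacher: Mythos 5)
Your reconstruction is correct. Note that the paper does not give its own proof of this proposition: it is stated as \cite[Proposition~2.4]{Zhu3} and cited as is, so there is no in-paper argument to compare against. That said, your argument is the standard spine decomposition and checks out at every step: the event $\{\Gamma=\gamma,\ \ell_{\mathcal T}(\Lambda)>0\}$ is a disjoint union over the offspring counts $\xi_i$ and spine-child ranks $J_i$ at the spine vertices; on each piece the $J_i$ left-sibling subtrees must avoid $\Lambda$ (probability $r_\Lambda(\gamma(i))$ each, integrating out the sibling's jump), the spine child must take the prescribed step, and the right siblings and the subtree at $u_n$ are unconstrained. The identity $\sum_{J\ge 0}\tilde\mu(J)\,r_\Lambda(z)^J=1-k_\Lambda(z)$ is exactly the first-generation decomposition of the adjoint BRW's avoidance probability, so the interchange of summation that collapses the double sum into $1-k_\Lambda(\gamma(i))$ is valid, giving $p_\Lambda(\gamma)$ after taking the product over $i$. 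For part (2), marginalizing the conditional law of $(\xi_i,J_i)$, which is proportional to $\mu(\xi_i)r_\Lambda(\gamma(i))^{J_i}$ on $\{J_i\le\xi_i-1\}$, onto $M_i=\xi_i-J_i-1$ does yield $\mu_\Lambda^{\gamma(i)}$; the conditioned left-sibling subtrees avoid $\Lambda$ and so contribute nothing to the trace, while the $M_i$ unrestricted right-sibling subtrees, together with the point $\gamma(i)\notin\Lambda$, reproduce the biased BRW rooted at $\gamma(i)$, and independence across $i$ and with the subtree at $u_n$ is preserved by the product structure. This is a faithful and complete proof of both assertions.
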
 
The product formula~\eqref{def.pLambda} 
defining $p_\Lambda$ implies that $\Gamma$ satisfies the (strong) 
Markov property, in the following sense.  
Given $x\in \mathbb Z^d$, we define a probability measure $\mathbb P_\Lambda^x$ on the set of paths $\gamma:x\to \Lambda$, by 
$$\mathbb P_\Lambda^x(\gamma ) := \frac{p_\Lambda(\gamma)}{\sum_{\gamma':x\to \Lambda} p_\Lambda(\gamma')},$$
which is nothing else than the law of $\Gamma$, 
conditionally on the event $\{\ell_{\mathcal T}(\Lambda)>0\}$. For convenience, we also set $\mathbb P_\Lambda^x(\gamma) = 0$, if $\gamma$ is not a path that goes from $x$ to $\Lambda$. 
Then we can state the Markov property as follows 
(we only state a particular case, but the same would hold 
for any stopping time). 
\begin{corollary}\label{cor.spine.Markov}
Let $\Lambda\subset A\subset \mathbb Z^d$, and $x\in \mathbb Z^d$ be given. Let $\tau_A:=\inf\{i\ge 0 :\Gamma(i) \in A\}$. 
Then for any path $\gamma:A\to \Lambda$, one has  
$$\mathbb P_\Lambda^x\Big((\Gamma(\tau_A), \dots) 
=\gamma\mid \Gamma(0),\dots,\Gamma(\tau_A) \Big) 
= \mathbb P_\Lambda^{\Gamma(\tau_A)}(\gamma).
$$
\end{corollary}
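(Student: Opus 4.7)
The plan is to deduce the Markov property directly from the product form of $p_\Lambda$ in~\eqref{def.pLambda}: because the weight of a path factorizes along the path, the normalizing constants defining $\mathbb P_\Lambda^z$ telescope when one restricts to a deterministic prefix, and the Markov property drops out mechanically.

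First I would record the following factorization. For any path $\gamma:x\to \Lambda$ and any index $0\le k\le |\gamma|$ such that $\gamma(i)\notin \Lambda$ for $i<k$, split $\gamma$ into $\gamma_1=(\gamma(0),\dots,\gamma(k))$ and $\gamma_2=(\gamma(k),\dots,\gamma(|\gamma|))$. Then~\eqref{def.pLambda} immediately gives
\begin{equation*}
p_\Lambda(\gamma)=q(\gamma_1)\cdot p_\Lambda(\gamma_2),\qquad q(\gamma_1):=\prod_{i=0}^{k-1}\theta\big(\gamma_1(i+1)-\gamma_1(i)\big)\,\big(1-k_\Lambda(\gamma_1(i))\big),
\end{equation*}
with $\gamma_2:\gamma(k)\to \Lambda$. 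Since $\Lambda\subset A$, for any $\gamma:x\to \Lambda$ the choice $k=\tau_A(\gamma)$ is legitimate (the prefix $\gamma_1$ is automatically $\Lambda$-avoiding), so the decomposition applies in the setting of the corollary.

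Second, I would carry out the conditional probability computation. Write $Z_z:=\sum_{\gamma':z\to \Lambda}p_\Lambda(\gamma')$, so that $\mathbb P_\Lambda^z(\gamma')=p_\Lambda(\gamma')/Z_z$. Fix an admissible prefix $\gamma_1$ ending at $y:=\gamma_1(\tau_A)$. Summing the factorization over all continuations in the denominator gives
\begin{equation*}
\mathbb P_\Lambda^x\big(\Gamma(0)=\gamma_1(0),\dots,\Gamma(\tau_A)=y\big)=\sum_{\gamma_2':y\to \Lambda}\frac{q(\gamma_1)\,p_\Lambda(\gamma_2')}{Z_x}=\frac{q(\gamma_1)\,Z_y}{Z_x}.
\end{equation*}
Dividing $\mathbb P_\Lambda^x(\Gamma=\gamma)=q(\gamma_1)p_\Lambda(\gamma_2)/Z_x$ by this marginal, the factor $q(\gamma_1)/Z_x$ cancels and what remains is $p_\Lambda(\gamma_2)/Z_y=\mathbb P_\Lambda^{y}(\gamma_2)=\mathbb P_\Lambda^{\Gamma(\tau_A)}(\gamma_2)$, which is exactly the claim.

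There is no substantive obstacle here: the argument is a one-line Bayes computation once the factorization of $p_\Lambda$ is observed. The only point requiring care is to verify that the prefix up to $\tau_A$ is $\Lambda$-avoiding so that the factorization applies, which is ensured by the inclusion $\Lambda\subset A$. The same reasoning, summing additionally over the finitely many admissible values of $\tau_A$ on the corresponding fibre, extends verbatim to any stopping time for $\Gamma$, yielding the strong Markov statement alluded to in the text.
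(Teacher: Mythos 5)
Your proof is correct and follows essentially the same route as the paper: factorize $p_\Lambda$ along the path via~\eqref{def.pLambda}, compute the marginal law of the prefix up to $\tau_A$, and cancel the normalizations in a Bayes ratio; the paper's proof writes the ratio directly rather than naming the marginal separately, but the content is identical, including the (implicit in the paper, explicit in yours) use of $\Lambda\subset A$ to guarantee that the prefix stays outside $\Lambda$ so the split makes sense.
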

\begin{proof}
It suffices to notice that by~\eqref{def.pLambda}, for any path $\gamma_0:x\to \gamma(0)$,  
writing $\gamma\circ \gamma_0$ for the concatenation of $\gamma_0$ and $\gamma$, one has
\begin{align*}
 \mathbb P_\Lambda^x\Big((\Gamma(\tau_A),\dots)= \gamma\mid (\Gamma(0),\dots,\Gamma(\tau_A)) = \gamma_0 \Big) & = \frac{p_\Lambda(\gamma\circ \gamma_0)}{\sum_{\gamma':x\to \Lambda}p_\Lambda(\gamma')} \times \frac{\sum_{\gamma':x\to \Lambda} p_\Lambda(\gamma')}{p_\Lambda(\gamma_0)\sum_{\gamma'':\gamma(0)\to \Lambda} p_\Lambda(\gamma'')}\\
& = \frac{p_\Lambda(\gamma)\cdot p_\Lambda(\gamma_0)}{p_\Lambda(\gamma_0)\sum_{\gamma'':\gamma(0)\to \Lambda} p_\Lambda(\gamma'')} = \mathbb P_\Lambda^{\gamma(0)}(\gamma). 
\end{align*}
\end{proof}

\subsection{Hitting Probability Lower Bounds}\label{sec.hitting}
Here we derive rough lower bounds for the hitting probabilities of balls, using a second moment method. The result is as follows (recall \eqref{etar}): 

\begin{lemma}\label{lem.hitting}
There exists a constant $c>0$ (only depending on the dimension), 
such that for any $r\ge 1$ and $x\notin B_r$, 
\begin{equation} 
\mathbb P_x(|\eta_r|>0) \ge c\cdot \left\{
\begin{array}{ll}
\frac{r^{d-4}}{\|x\|^{d-2}} & \text{if }d\ge 5 \\
\frac 1{\|x\|^2\log (1+\frac{\|x\|}{r}) }& \text{if }d=4.
\end{array}
\right. 
\end{equation}
\end{lemma}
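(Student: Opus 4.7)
The plan is to apply the Paley-Zygmund inequality \eqref{PZ1} (with $\varepsilon=0$) to $|\eta_r|$, which gives $\mathbb P_x(|\eta_r|>0)\ge \mathbb E_x[|\eta_r|]^2/\mathbb E_x[|\eta_r|^2]$. One may assume $\|x\|\ge 2r$, since otherwise the bound is either trivial ($x\in\partial B_r$ gives $|\eta_r|\ge 1$ deterministically) or follows from the same computation with all quantities of constant order. The first moment is easy: by the many-to-one identity together with $\mathbb E[Z_n]=1$,
$$\mathbb E_x[|\eta_r|]=\sum_{n\ge 0}\mathbf P_x(H_r=n)=\mathbf P_x(H_r<\infty),$$
and by the Green's function estimate \eqref{Green2} this is $\gtrsim (r/\|x\|)^{d-2}$. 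The task thus reduces to a matching upper bound on the second moment.

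For $\mathbb E_x[|\eta_r|^2]$ I would decompose the sum $\sum_{u,v}\mathbb P_x(u,v\in\eta_r)$ according to the common ancestor $w=u\wedge v$. The diagonal $u=v$ contributes $\mathbb E_x[|\eta_r|]$. For $u\ne v$, set $q(y):=\mathbf P_y(H_r<\infty)$; the subtrees rooted at two distinct children of $w$ are independent, and the expected number of first hits of $\partial B_r$ contained in the subtree attached to a child with starting position $z$ is exactly $q(z)$. Using $\mathbb E[\xi(\xi-1)]=\sigma^2$ together with the harmonicity of $q$ off $\partial B_r$ (which allows one to replace the one extra step from $w$ to a child by $q(S_w)$), and then summing over $w$ whose ancestral path avoids $\partial B_r$, one obtains
$$\mathbb E_x[|\eta_r|^2]-\mathbb E_x[|\eta_r|]=\sigma^2\sum_{y\in B_r^c} G_{B_r^c}(x,y)\,q(y)^2,$$
where $G_{B_r^c}(x,y):=\sum_{k\ge 0}\mathbf P_x(S_k=y,H_r>k)$ is the Green's function of the walk killed on $\partial B_r$.

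It then remains to bound this sum using $G_{B_r^c}(x,y)\le G(y-x)\asymp\|y-x\|^{2-d}$ and $q(y)\asymp (r/\|y\|)^{d-2}$, which follow from \eqref{Green}-\eqref{Green2}. Splitting the sum over $y\in B_r^c$ into the three regions $\{r\le\|y\|\le 2r\}$, $\{2r\le\|y\|\le\|x\|/2\}$, and $\{\|y-x\|\le\|x\|/2\}$, the intermediate region reduces to the integral $\int_{2r}^{\|x\|/2}\rho^{3-d}\,d\rho$, which converges at its lower end when $d\ge 5$ and makes all three contributions of order $r^d/\|x\|^{d-2}$; in $d=4$ the same integral is of order $\log(\|x\|/r)$, producing the extra logarithmic factor and an overall bound $r^4\log(\|x\|/r)/\|x\|^2$. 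Inserting these estimates into Paley-Zygmund yields exactly the two announced lower bounds. The main obstacle is the careful region-wise estimation of this Green's function integral, especially isolating the logarithmic divergence responsible for the $d=4$ behaviour; the harmonicity argument used in the many-to-two reduction is essentially routine once the setup is clear.
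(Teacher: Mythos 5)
Your proposal follows essentially the same route as the paper: first moment via $\mathbb E[Z_n]=1$ giving $\mathbb E_x[|\eta_r|]=\mathbf P_x(H_r<\infty)$, second moment by decomposing over the common ancestor $u\wedge v$ and reducing to a Green's-function sum against $\mathbf P_\cdot(H_r<\infty)^2$, then Cauchy--Schwarz (your Paley--Zygmund at $\varepsilon=0$). The only cosmetic difference is that you keep the killed Green's function $G_{B_r^c}$ and write an exact identity for the off-diagonal contribution, whereas the paper simply drops the killing to bound it by $\sum_{z\notin B_r} G(z-x)\,\|z\|^{-2(d-2)}$; both lead to the same region-wise estimate and the same $\log(\|x\|/r)$ factor in $d=4$.
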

\begin{remark}\emph{Note that in dimension five and higher the result follows from \cite{Zhu1}, which proves as well an upper bound of the same order.
We include a short proof here, for the reader's convenience. 
In dimension four, a more precise asymptotic is proved in 
\cite{Zhu3} in case $r=1$ (see also \cite{Zhu4} for a rough upper bound still in the case $r=1$).} 
\end{remark}
\begin{remark}  \label{rem.lb.biased} \emph{We shall also use this lower bound in dimension four for a biased BRW. In this case the result follows immediately from the lemma, and the fact that a biased BRW has a probability bounded from below to have at least one child, from where starts a fresh usual BRW.}
\end{remark} 

\begin{proof}
Recall that we denote by $H_r$ the hitting time of $\partial B_r$ 
for a standard random walk.  
One has, using \eqref{Green2} for the last inequality, 
\begin{align*}
\mathbb E_x\big[|\eta_r|\big] & = \sum_{n=0}^\infty 
\mathbb E_x\Big[\sum_{|u|=n} \1\{u \in \eta_r \}\Big]=
 \sum_{n=0}^\infty \mathbb E[Z_n] \cdot \mathbf P_x(H_r = n) 
= \mathbf P_x(H_r<\infty ) \gtrsim  (\frac r{\|x\|})^{d-2}. 
\end{align*}
Now we bound the second moment. 
Recall that by definition, if $u\in \eta_r$, then none of its descendant can be in $\eta_r$. Thus summing first over all possible 
$w\in \mathbf{\mathcal T}$ 
and then integrating over all possible $u\neq v$, with $u\wedge v=w$ 
(and necessarily both $u$ and $v$ different from $w$),  gives (recall the notation \eqref{ZnLambda})
\begin{align*}
\mathbb E_x\big[|\eta_r|^2\big] & = 
\mathbf P_x(H_r<\infty) + \sum_{k=0}^\infty 
\mathbb E_x \Big[ \sum_{w\in \mathcal Z_k((\partial B_r)^c)} \xi_w(\xi_w-1) 
\mathbf P_{S_w}(H_r<\infty)^2  \Big] \\
& = \mathbf P_x(H_r<\infty) + \sigma^2 \sum_{k=0}^\infty 
\mathbb E_x\Big[\sum_{w\in \mathcal Z_k((\partial B_r)^c)}  \mathbf P_{S_w}(H_r<\infty)^2  \Big] \\
&\le \mathbf P_x(H_r<\infty) + \sigma^2 \sum_{k=0}^\infty 
\mathbb E_x \Big[ \sum_{|w| = k} \1\{S_w\notin \partial B_r\}\cdot
\mathbf P_{S_w}(H_r<\infty)^2  \Big]\\
& \lesssim (\frac{r}{\|x\|})^{d-2} + r^{2(d-2)}\sum_{k=0}^\infty 
\mathbf E_x\Big[\frac{\1\{S_k \notin \partial B_r\}}{\|S_k\|^{2(d-2)}}\Big] 
=  (\frac{r}{\|x\|})^{d-2} + r^{2(d-2)}\sum_{z\notin B_r} \frac{G(z-x)}{\|z\|^{2(d-2)}}\\
& \lesssim \left\{ 
\begin{array}{ll}
\frac{r^d}{\|x\|^{d-2} } & \text{if }d\ge 5 \\
\frac{r^4}{\|x\|^2} \cdot \log (1+\frac{\|x\|}{r}) & \text{if }d=4.
\end{array}
\right.
\end{align*}
The result follows using that by Cauchy-Schwarz inequality, 
$$\mathbb P_x(|\eta_r|>0) \ge \frac{\mathbb E_x\big[|\eta_r|\big]^2}{\mathbb E_x\big[|\eta_r|^2\big]}. $$ 
\end{proof}

The next result holds in any dimension. 
\begin{lemma}\label{lem.hit.inside}
Assume $d\ge 1$. There exists a constant $c>0$ (only depending on the dimension), such that for any $r\ge 1$,  
$$
\inf_{x\in B_r} \ \mathbb P_x(|\eta_r|>0) \ge c/r^2. $$ 
\end{lemma}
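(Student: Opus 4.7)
My plan is to mirror the second moment method of Lemma~\ref{lem.hitting}, but adapted to a starting point \emph{inside} $B_r$, and to exploit the fact that a simple random walk confined to a ball of radius $r$ exits in expected time of order $r^2$.

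First, I would compute the first moment. Since $B_r$ is finite and the SRW is irreducible on $\Z^d$, for any $x\in B_r$ one has $\mathbf{P}_x(H_r<\infty)=1$. Writing as in the proof of Lemma~\ref{lem.hitting},
\begin{equation*}
\mathbb E_x[|\eta_r|] \;=\; \sum_{n=0}^\infty \mathbb E[Z_n]\,\mathbf P_x(H_r=n)\;=\;\mathbf P_x(H_r<\infty)\;=\;1,
\end{equation*}
uniformly in $x\in B_r$. This uses only $\mathbb E[Z_n]=1$ from~\eqref{Zn}.

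Next, I would bound the second moment. Replaying the ancestor-decomposition argument in the proof of Lemma~\ref{lem.hitting} (summing over the pair $(u,v)$ of hitting vertices via their least common ancestor $w$, and using $\mathbb E[\xi_w(\xi_w-1)]=\sigma^2$), one gets
\begin{equation*}
\mathbb E_x\big[|\eta_r|^2\big] \;\le\; 1 \;+\; \sigma^2\sum_{k=0}^\infty \mathbb E_x\Big[\sum_{|w|=k}\1\{S_w\notin\partial B_r\}\,\mathbf P_{S_w}(H_r<\infty)^2\Big].
\end{equation*}
Using $\mathbf P_y(H_r<\infty)\le 1$ together with $\mathbb E[Z_k]=1$, the double sum collapses to
\begin{equation*}
\sum_{k=0}^\infty \mathbf P_x(H_r>k) \;=\; \mathbf E_x[H_r].
\end{equation*}
Finally, a standard martingale argument (e.g.\ optional stopping applied to $\|S_n\|^2 -\alpha n$, with $\alpha$ the trace of the covariance matrix of $\theta$) yields $\mathbf E_x[H_r]\le Cr^2$ uniformly in $x\in B_r$, because $\|S_{H_r}\|\le r+1$. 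Hence $\mathbb E_x[|\eta_r|^2]\lesssim r^2$.

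Putting things together through the Paley--Zygmund inequality~\eqref{PZ1},
\begin{equation*}
\mathbb P_x(|\eta_r|>0) \;\ge\; \frac{\mathbb E_x[|\eta_r|]^2}{\mathbb E_x[|\eta_r|^2]} \;\gtrsim\; \frac{1}{r^2},
\end{equation*}
uniformly in $x\in B_r$, which is the desired bound. There is no real obstacle here beyond the two standard inputs (certainty of exit, and the $O(r^2)$ bound on $\mathbf E_x[H_r]$); the only care needed is the Galton--Watson branching identity $\mathbb E[\xi_w(\xi_w-1)]=\sigma^2$, valid since $\mu$ is critical with variance $\sigma^2$.
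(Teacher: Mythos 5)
Your overall strategy coincides with the paper's: compute the first moment exactly ($=1$), bound the second moment by $O(r^2)$ via the ancestor decomposition, and conclude by Cauchy--Schwarz (your Paley--Zygmund with $\varepsilon=0$). The martingale estimate $\mathbf E_x[H_r]\lesssim r^2$ is also the right and standard input.

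There is, however, a genuine slip in the second-moment chain as written. You bound the ancestor sum by
\[
\sigma^2\sum_{k\ge 0}\mathbb E_x\Big[\sum_{|w|=k}\1\{S_w\notin\partial B_r\}\,\mathbf P_{S_w}(H_r<\infty)^2\Big],
\]
and then simultaneously discard the factor $\mathbf P_{S_w}(H_r<\infty)^2\le 1$ and claim the result is $\sum_k\mathbf P_x(H_r>k)=\mathbf E_x[H_r]$. But $\mathbb E_x\big[\sum_{|w|=k}\1\{S_w\notin\partial B_r\}\big]=\mathbf P_x(S_k\notin\partial B_r)$, not $\mathbf P_x(H_r>k)$; the former does not tend to zero (indeed $\sum_k\mathbf P_x(S_k\notin\partial B_r)=\infty$, since $\partial B_r$ is a thin set). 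You have performed two relaxations --- dropping the ancestral restriction on $w$, and dropping the hitting probabilities --- and doing both at once yields a divergent bound. The fix is to keep the restriction that the least common ancestor $w$ of two distinct elements of $\eta_r$ has its \emph{entire} ancestral line off $\partial B_r$ (this is exactly $w\in\mathcal Z_k((\partial B_r)^c)$ in the paper's notation, and it is what the decomposition actually gives you). Then $\mathbb E_x\big[|\mathcal Z_k((\partial B_r)^c)|\big]=\mathbf P_x(H_r>k)$, and bounding $\mathbf P_{S_w}(H_r<\infty)\le 1$ at that stage gives the desired $\mathbf E_x[H_r]\lesssim r^2$. With this correction the argument is identical to the paper's. (In the proof of Lemma~\ref{lem.hitting} for $x\notin B_r$ the relaxation to $\1\{S_w\notin\partial B_r\}$ is harmless only because the Green's-function factor $\mathbf P_{S_w}(H_r<\infty)^2$ is retained; here you cannot afford to give both up.)
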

\begin{proof}
The proof is similar to the previous lemma. 
Note first that for any $x\in B_r$, 
$$\mathbb E_x[|\eta_r|] = \mathbf P_x(H_r<\infty) = 1,$$
and as before, 
$$\mathbb E_x\big[|\eta_r|^2\big]   \le 1 + \sigma^2\sum_{k=0}^\infty \mathbf P_x(H_r>k) = 1+ \sigma^2\, \mathbf E_x[H_r] \lesssim r^2. $$
The result follows. 
\end{proof}

Finally we state a result concerning the first and third moments 
of $|\eta_r|$, when starting from $\partial B_{2r}$.
\begin{lemma}\label{lem.mom.etar}
Assume $d\ge 1$. 
\begin{enumerate}
\item For any $x\in B_r^c$, 
one has $\mathbb E_x\big[|\eta_r|\big] = \mathbf P_x(H_r<\infty)$. 
In particular, when $d\ge 3$, there exists 
$\varepsilon>0$, such that for any $r\ge 1$, 
$$
\sup_{x\in \partial B_{2r}} \mathbb E_x\big[|\eta_r|\big] 
\le 1-\varepsilon.
$$
\item Assume $d\ge 4$. There exists $C>0$, such that for any $r\ge 1$, 
$$\sup_{x\in \partial B_{2r}} \mathbb E_x\big[|\eta_r|^3\big] \le Cr^4.$$
\end{enumerate}
\end{lemma}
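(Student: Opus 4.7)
The many-to-one formula for a critical BGW tree, using $\mathbb{E}[Z_n]=1$, yields
\begin{equation*}
\mathbb{E}_x[|\eta_r|]=\sum_{n\ge 0}\mathbb{E}_x\Big[\sum_{|u|=n}\1\{u\in\eta_r\}\Big]=\sum_{n\ge 0}\mathbf{P}_x(H_r=n)=\mathbf{P}_x(H_r<\infty).
\end{equation*}
For $d\ge 3$ and $x\in\partial B_{2r}$, combining~\eqref{Green2} with the asymptotic~\eqref{Green} gives $\mathbf{P}_x(H_r<\infty)\le(r/\|x\|)^{d-2}(1+o(1))\le 2^{-(d-2)}+o(1)$ as $r\to\infty$, which is strictly less than $1$ for $r$ large. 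For $r$ in any bounded range the bound is uniform by transience, since $\mathbf{P}_x(H_r<\infty)<1$ for each of finitely many $x\in\partial B_{2r}$.

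\textbf{Part (2).} I expand $|\eta_r|^3=|\eta_r|+3|\eta_r|(|\eta_r|-1)+|\eta_r|(|\eta_r|-1)(|\eta_r|-2)$. The first term is $\le 1$ and the second is bounded by $3\mathbb{E}_x[|\eta_r|^2]\lesssim r^2$ via Lemma~\ref{lem.hitting}. For the third term, I sum over ordered triples $(u,v,w)$ of distinct vertices of $\eta_r$ and decompose by the tripod structure of the three ancestral lines. \emph{Topology A} (all three lines split at a common vertex $z$ whose three distinct children each carry one of $u,v,w$): a three-particle many-to-one computation bounds the contribution by
$\mathbb{E}[\xi(\xi-1)(\xi-2)]\sum_{z\notin\partial B_r}G(z-x)\mathbf{P}_z(H_r<\infty)^3\le C\mathbb{E}_x[|\eta_r|^2]\lesssim r^2$,
after using $\mathbf{P}_z(H_r<\infty)\le 1$; the third factorial moment of $\mu$ is finite by the exponential-moment hypothesis. \emph{Topology B} (one of $u,v,w$ branches off at a vertex $z$ and the other two at a descendant $a$ of $z$; three symmetric sub-cases): bounding each Green's function of the walk killed on $\partial B_r$ by the unrestricted $G$, a two-step many-to-one argument gives
\begin{equation*}
\lesssim\sigma^4\sum_{y_z\notin\partial B_r}G(y_z-x)\,\mathbf{P}_{y_z}(H_r<\infty)\,\Phi(y_z),\qquad\Phi(y_z):=\sum_{y_a\notin\partial B_r}G(y_a-y_z)\mathbf{P}_{y_a}(H_r<\infty)^2.
\end{equation*}

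The key observation is that $\Phi(y_z)$ is exactly the inner sum controlled in the proof of Lemma~\ref{lem.hitting} (and for $y_z\in B_r$ in the proof of Lemma~\ref{lem.hit.inside}): $\Phi(y_z)\lesssim r^2$ for $y_z\in B_r$, while for $y_z\notin B_r$, $\Phi(y_z)\lesssim r^d/\|y_z\|^{d-2}$ when $d\ge 5$ and $\Phi(y_z)\lesssim r^4\log(1+\|y_z\|/r)/\|y_z\|^2$ when $d=4$. I then estimate the outer sum in three shells. For $y_z\in B_r$: $G(y_z-x)\lesssim r^{2-d}$ since $\|y_z-x\|\ge r$, giving $|B_r|\cdot r^{2-d}\cdot r^2\asymp r^4$. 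For the intermediate shell $r\le\|y_z\|\le 3r$: polar coordinates around $x$ give $r^2\int_0^{Cr}\rho\,d\rho\asymp r^4$. For the outer shell $\|y_z\|\ge 3r$, where $\|y_z-x\|\asymp\|y_z\|$: in $d\ge 5$ the integrand is $\asymp r^{2d-2}\|y_z\|^{6-3d}$, yielding $r^{2d-2}\int_{3r}^\infty\rho^{5-2d}d\rho\asymp r^4$; in $d=4$ the integrand is $\asymp r^6\|y_z\|^{-6}\log(\|y_z\|/r)$, yielding $r^6\int_{3r}^\infty\rho^{-3}\log(\rho/r)d\rho\asymp r^4$. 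The main technical obstacle is this outer shell in dimension four, where the logarithm from $\Phi$ must be absorbed by the $\rho^{-3}$ decay of the Green's-weighted integrand; beyond this, only Lemma~\ref{lem.hitting} and the Green's asymptotic~\eqref{Green} are needed.
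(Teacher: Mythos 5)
Your proof is correct and follows essentially the same strategy as the paper: expand the third moment by the tree topology of the three ancestral lines (a single triple branch point versus two successive binary branch points), compute via many-to-one, and close with the Green's function asymptotic~\eqref{Green} and the second-moment bound from the proof of Lemma~\ref{lem.hitting}. The differences are purely cosmetic — the paper keeps the inner quantity packaged as $\mathbb E_{S_u}[|\eta_r|^2]$ and evaluates a single Green's-weighted sum rather than unrolling it into $\Phi$ and doing a shell-by-shell estimate, and your case $y_z\in B_r$ is in fact vacuous when $x\in\partial B_{2r}$ (a vertex of $\mathcal T((\partial B_r)^c)$ started from $x$ always has position outside $B_r\cup\partial B_r$, since $\partial B_r$ separates for nearest-neighbor steps) — but neither affects correctness.
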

\begin{remark}\emph{The last point of the lemma can be understood, by considering that starting from $\partial B_{2r}$, the probability to hit $\partial B_r$ is of order $1/r^2$, and conditionally on hitting it, the number of frozen particles $|\eta_r|$ is typically of order $r^2$. This reasoning would apply also 
to all other moments of fixed order, but the third moment will suffice for our purpose. }
\end{remark}
  
\begin{proof}
We start with the first point. The equality $\mathbb E_x\big[|\eta_r|\big]=\mathbf P_x(H_r<\infty)$ has already been seen in the proof of Lemma~\ref{lem.hitting}. Then the fact that if $\|x\|\ge 2r$, this quantity is bounded from above by some constant smaller than one, in dimension three and higher, follows from \eqref{Green} and \eqref{Green2}.

Let us prove the second point now. When we sum over triples, say $(u,v,w)\in \eta_r$, we distinguish two cases. 
Either at least two of them are equal, and we just bound the corresponding sum by three times the second moment of $|\eta_r|$, 
or the three points are distinct. In the latter case, we again 
distinguish between two possible situations: 
either, $u\wedge v = u\wedge w$, or $u\wedge v \neq u\wedge w$. 
In both cases, by summing first over $(u\wedge v)\wedge(u\wedge w)$ yields, 
for any $x\in \partial B_{2r}$, (recall the notation \eqref{ZnLambda}),
\begin{align}\label{moment3}
\nonumber \mathbb E_x\big[|\eta_r|^3\big]  & 
\le  3\mathbb E_x\big[|\eta_r|^2\big]   
+ \sum_{k=0}^\infty \mathbb E_x\Big[
\sum_{u\in \mathcal Z_k((\partial B_r)^c)}
\xi_u(\xi_u-1)(\xi_u-2) \mathbf P_{S_u}(H_r <\infty)^3\Big]\\
 &  + \sum_{k=0}^\infty 
\mathbb E_x\Big[\sum_{u\in  \mathcal Z_k((\partial B_r)^c)}
\xi_u(\xi_u-1) \mathbf P_{S_u}(H_r <\infty)\cdot \mathbb E_{S_u} \big[|\eta_r|^2\big] \Big]. 
\end{align}
Next, using \eqref{Green2} we get
\begin{align*} 
& \sum_{k=0}^\infty \mathbb E_x\Big[\sum_{u\in  \mathcal Z_k((\partial B_r)^c)}
\xi_u(\xi_u-1)(\xi_u-2) \mathbf P_{S_u}(H_r <\infty)^3\Big]\\
& \le C r^{3(d-2)} \sum_{k=0}^\infty 
\mathbf E_x\Big[\frac{\1\{S_k\notin B_r\}}{\|S_k\|^{3(d-2)}} \Big]\le Cr^{3(d-2)} \sum_{z\notin B_r} \frac{G(z-x)}{\|z\|^{3(d-2)}}\le Cr^2. 
\end{align*}
Finally for the last sum in ~\eqref{moment3}, we use the computation from the proof of Lemma~\ref{lem.hitting}, in particular the fact that 
the second moment of $|\eta_r|$ is $\mathcal O(r^2)$, when starting from $x\notin B_r$. 
This yields the upper bound 
$$Cr^{2d-2}\sum_{k=0}^\infty \mathbf E_x\Big[\frac{\1\{S_k\notin B_r\}}{ \|S_k\|^{2(d-2)}}\log(1+\frac{\|S_k\|}{r}) \Big] \le Cr^{2d-2}\sum_{z\notin B_r}
\frac{G(z-x)}{\|z\|^{2(d-2)}}\cdot \log (1+\frac{\|z\|}{r})\le Cr^4,$$
concluding the proof of the lemma.  
\end{proof}

\subsection{Hitting probability upper bounds}
In this section we provide upper bounds for hitting probabilities of spheres, which are of the same order as the lower bounds obtained previously.

The first result considers hitting probabilities of a sphere, starting from inside the ball. 
The result is not new, in particular it was already proved in~\cite[Proposition 10.3]{Zhu1} under only a first moment hypothesis on the offspring distribution (and mild condition on the jump distribution of the walk). For completeness, we provide here an alternative proof, which however requires a finite second moment of the offspring distribution. We also mention~\cite{Kes} which proves similar results.
\begin{proposition}[\cite{Zhu1}]\label{prop.hit.inside}
Assume $d\ge 1$. There exists $C>0$, such that for any $r\ge 1$, 
\begin{equation*}
\sup_{x\in B_{r/2}} \mathbb P_x(|\eta_r|>0) \le \frac{C}{r^2}.
\end{equation*}
\end{proposition}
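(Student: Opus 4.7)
The plan is to recast the desired upper bound as a lower bound on a conditional expectation. Since a simple random walk started in $B_r$ exits $B_r$ almost surely, the computation in the proof of Lemma~\ref{lem.hit.inside} gives $\mathbb E_x[|\eta_r|]=\mathbf P_x(H_r<\infty)=1$ for every $x\in B_r$, and consequently
\[
\mathbb P_x(|\eta_r|>0)=\frac{1}{\mathbb E_x[|\eta_r|\mid |\eta_r|>0]}.
\]
It therefore suffices to show $\mathbb E_x[|\eta_r|\mid |\eta_r|>0] \gtrsim r^2$, uniformly in $x\in B_{r/2}$.

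For this conditional lower bound I would use Zhu's spine decomposition (Proposition~\ref{prop.Zhu.spine}). Conditionally on $\{|\eta_r|>0\}$, the first-entry spine $\Gamma$ has law $\mathbb P_{\partial B_r}^{x}$, and $\eta_r$ is the disjoint union of $\{\Gamma(|\Gamma|)\}$ and the $\eta_r$-sets of the independent biased BRWs planted at each interior spine vertex. Each such biased BRW produces, at its root, a random number of unbiased critical BRW subtrees starting at neighbors of $\Gamma(i)\in B_r$, and each such subtree contributes $\mathbb E_y[|\eta_r|]=1$ in expectation. This yields
\[
\mathbb E_x[|\eta_r|\mid \Gamma=\gamma]=1+\sum_{i=0}^{|\gamma|-1} m_{\gamma(i)},
\]
where $m_z$ denotes the mean of the biased offspring distribution $\mu_{\partial B_r}^z$. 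A direct computation from the definition of $\mu_\Lambda^z$ shows $m_z\ge c_0>0$ uniformly for $z$ at distance at least one from $\partial B_r$ (the limiting value being $\sigma^2/2$), so the problem reduces to proving $\mathbb E_{\mathbb P_{\partial B_r}^{x}}[|\Gamma|]\gtrsim r^2$.

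The hard part is this last lower bound on the expected spine length. Under $\mathbb P_{\partial B_r}^{x}$ the spine has the law of a SRW from $x$ reweighted by $\prod_i(1-k_{\partial B_r}(\Gamma(i)))$ and conditioned to reach $\partial B_r$, and one must argue that this reweighting does not concentrate the law on paths much shorter than the natural $r^2$ hitting time. I would handle this by invoking the Markov property of the spine (Corollary~\ref{cor.spine.Markov}) at the successive hitting times of the geometric sequence of concentric spheres $\partial B_{r(1-2^{-k})}$ between $\partial B_{r/2}$ and $\partial B_r$: on each annular shell, the weights $1-k_{\partial B_r}$ can be uniformly bounded below by a positive constant via a crude a priori estimate on $k_{\partial B_r}$, and the expected crossing time of the spine is then comparable to that of an unweighted SRW on the shell, of order $r\cdot(\text{shell width})$. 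Summing the resulting geometric series over the $O(\log r)$ shells then yields the required $\gtrsim r^2$ bound, and completes the proof.
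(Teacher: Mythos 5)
Your reduction $\mathbb P_x(|\eta_r|>0)=1/\mathbb E_x[|\eta_r|\mid|\eta_r|>0]$ (using $\mathbb E_x[|\eta_r|]=\mathbf P_x(H_r<\infty)=1$), and the spine computation $\mathbb E_x[|\eta_r|\mid\Gamma=\gamma]=1+\sum_i m_{\gamma(i)}$ with $m_z\ge \mu(0)>0$, are both correct and genuinely different from what the paper does. (The paper argues directly: it splits $\{|\eta_r|>0\}$ over the generation $k$ at which the BGW process dies, bounds each term by $\mathbb E[Z_k\mu(0)^{Z_k}]\cdot\mathbf P_x(H_r\le k)\lesssim k^{-1}e^{-cr^2/k}$, and sums over $k\le r^2$, with the residual $\mathbb P(Z_{r^2}\neq 0)\lesssim r^{-2}$. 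No spine decomposition is used.)

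However, the final and central step of your argument --- $\mathbb E_{\mathbb P^x_{\partial B_r}}[|\Gamma|]\gtrsim r^2$ --- has a genuine gap. You assert that a uniform lower bound $1-k_{\partial B_r}(z)\ge c>0$ on each shell suffices to make the weighted spine's crossing time comparable to that of an unweighted SRW. That implication is false: a killed random walk with \emph{constant} killing rate $p>0$, conditioned to survive to a distant boundary, acquires a drift of order $\sqrt p$ toward that boundary and crosses ballistically, in time $\Theta(n/\sqrt p)$ rather than $\Theta(n^2)$. A lower bound $1-k\ge c$ with $c<1$ therefore allows the conditional law to concentrate on paths of length $O(r)$, not $r^2$. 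To get the diffusive $r^2$ scaling one actually needs the quantitative decay $k_{\partial B_r}(z)\lesssim \mathrm{dist}(z,\partial B_r)^{-2}$, so that the survival penalty over a diffusive shell-crossing is $O(1)$. But that bound is, up to replacing $\mu$ by the adjoint offspring law $\tilde\mu$ at the root, exactly the hitting-probability estimate you are trying to prove --- $k_{\partial B_r}(z)$ is controlled by $\sup_{y\sim z}\mathbb P_y(|\eta_r|>0)$ --- so invoking it as a ``crude a priori estimate'' makes the argument circular. The weaker non-circular bound $k_{\partial B_r}(z)\lesssim \mathrm{dist}(z,\partial B_r)^{-1}$ coming from Kolmogorov's tail estimate on the tree height is also insufficient: over a shell of width $w$ at distance $\Theta(w)$ from the boundary, it still gives a survival penalty of order $e^{-\Theta(r)}$ for diffusive crossings. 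So the step where you claim comparability with unweighted SRW is where the proof breaks, and fixing it requires essentially the input you were hoping to avoid.
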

\begin{proof}
Assume without loss of generality that $r^2$ is an integer. We first write for $x\in B_{r/2}$, using \eqref{Kol}, that for some constant $C>0$, 
\begin{align*} 
\mathbb P_x(|\eta_r| >0 ) & \le \sum_{k=0}^{r^2} \mathbb P_x(Z_k\neq 0, Z_{k+1}=0, |\eta_r|>0) + \mathbb P(Z_{r^2} \neq 0) \\
& \le\sum_{k=0}^{r^2} \mathbb P_x(Z_k\neq 0, Z_{k+1}=0, |\eta_r|>0) + \frac{C}{r^2}. 
\end{align*} 
Now fix some $k\le r^2$, and note that conditionally on $Z_k$, the probability for $Z_{k+1}$ to be zero is equal to $\mu(0)^{Z_k}$. Then by using a first moment bound, and the fact that for any vertex at generation $k$, the probability that the BRW reaches $\partial B_r$ along the line of its ancestors is given exactly by the probability for a SRW to reach $\partial B_r$, we get for some constant $c>0$, 
\begin{align} \label{ZkZk+1}
\nonumber \mathbb P_x(Z_k\neq 0, Z_{k+1}=0, |\eta_r|>0) & \le \mathbb E\big[Z_k \mu(0)^{Z_k}\big]\cdot \mathbf P_x(H_r\le k) \\
 & \lesssim \mathbb E\big[Z_k \mu(0)^{Z_k}\big]\cdot \exp(-cr^2/k), 
\end{align}
where the last inequality is well-known, see e.g. Proposition 2.1.2 in~\cite{LL10}. Letting $f_k(s) = \sum_{n=0}^\infty \mathbb P(Z_k=n)s^n$, the generating function of $Z_k$, one has 
$$
\mathbb E\big[Z_k \mu(0)^{Z_k}\big] = \sum_{n=0}^\infty n\mu(0)^n \mathbb P(Z_k=n) = \mu(0) f'_k(\mu(0)). 
$$
We claim that for some constant $C>0$ (depending only on $\mu$), one has 
\begin{equation}\label{exp.Zk}
f'_k(\mu(0)) \le \frac C{k^2}. 
\end{equation} 
Indeed, this follows from the results in~\cite{AN}. First, note that it suffices to prove~\eqref{exp.Zk} when $k$ is an even integer, since for any $s\le 1$, and $k\ge 1$, $f'_k(s) = f'(f_{k-1}(s)) f'_{k-1}(s)\le f'_{k-1}(s)$. 
Now the process $(Z_{2k})_{k\ge 0}$ is a critical branching process, whose offspring distribution $\mu'$ satisfies $\mu'(1) >0$. 
Hence Lemma 2 p.12 in~\cite{AN} shows that $\mathbb P(Z_k=n) \le \mathbb P(Z_k=1)\cdot \pi_n$, for all $n\ge 1$, and some constants $(\pi_n)_n$ in $[0,\infty]$. Then Theorem 2 p.13 shows that $\sum_{n\ge 1} n\pi_n\mu(0)^n <\infty$, and Corollary 2 p.23 gives that $\mathbb P(Z_k=1) \le C/k^2$, for some constant $C>0$, which altogether proves~\eqref{exp.Zk}. Injecting this estimate in~\eqref{ZkZk+1} and summing over $k$ concludes the proof of the lemma. 
\end{proof}

The second result concerns hitting probabilities of a ball starting from a point outside it. 

\begin{proposition}[\cite{Zhu1,Zhu4}] \label{prop.hit.upper}
There exists $C>0$, such that for any $r\ge 1$, and any $x\notin B_{2r}$, 
\begin{equation}
\mathbb P_x(|\eta_r|>0) \le C\cdot \left\{
\begin{array}{ll}
\frac{r^{d-4}}{\|x\|^{d-2}} & \text{if }d\ge 5 \\
\frac 1{\|x\|^2\log (\|x\|/r) }& \text{if }d=4.
\end{array}
\right. 
\end{equation}
\end{proposition}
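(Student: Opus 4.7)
The strategy is to combine the first-moment identity of Lemma~\ref{lem.mom.etar} with the spine decomposition of Proposition~\ref{prop.Zhu.spine}, in the spirit of \cite{Zhu1,Zhu3}. Writing $h(x):=\mathbb P_x(|\eta_r|>0)$, the elementary factorisation
\[
h(x)\cdot\mathbf E_x\!\left[\,|\eta_r|\ \big|\ |\eta_r|>0\,\right] \;=\; \mathbf E_x[|\eta_r|] \;=\; \mathbf P_x(H_r<\infty) \;\lesssim\; (r/\|x\|)^{d-2},
\]
obtained from Lemma~\ref{lem.mom.etar} and \eqref{Green2}, reduces the task to a matching lower bound on the conditional expectation: of order $r^2$ in $d\ge 5$, and of order $r^2\log(\|x\|/r)$ in $d=4$.

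Conditionally on $|\eta_r|>0$, Proposition~\ref{prop.Zhu.spine}(2) represents the hits on $\partial B_r$ as the endpoint of the spine $\Gamma$ together with independent biased subtrees rooted at each interior spine vertex $\Gamma(i)$. A direct generating-function computation, using $1-k_{\partial B_r}(z)=\tilde F(r_{\partial B_r}(z))$ together with $r_{\partial B_r}(z)\to 1$ as $\|z\|\to\infty$, shows that the biased root-offspring measure $\mu_{\partial B_r}^z$ has mean $\sigma^2/2+o(1)$ in that regime; combined with the harmonicity of $y\mapsto\mathbf P_y(H_r<\infty)$ outside $\partial B_r$, the expected number of $\partial B_r$-hits generated by the biased subtree rooted at $\Gamma(i)$ is $(\sigma^2/2+o(1))\,\mathbf P_{\Gamma(i)}(H_r<\infty)$. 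Therefore
\[
\mathbf E_x\!\left[\,|\eta_r|\ \big|\ |\eta_r|>0\,\right] \;\gtrsim\; \sum_{y\notin\partial B_r}\mathbf E^{\mathrm{sp}}_x[L(y)]\,\mathbf P_y(H_r<\infty),
\]
where $L(y)$ is the number of interior spine vertices equal to $y$ and $\mathbf E^{\mathrm{sp}}_x$ denotes expectation under the spine law $\mathbb P^x_{\partial B_r}$ of Corollary~\ref{cor.spine.Markov}.

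The final step is a dyadic estimate. The spine law being, up to multiplicative constants, the Doob $h$-transform of the SRW by the harmonic function $y\mapsto\mathbf P_y(H_r<\infty)$, a standard computation with the killed Green function yields that the Doob-transformed walk spends a time of order $4^kr^2$ in each dyadic annulus $A_k:=\{z:\ 2^kr\le\|z\|<2^{k+1}r\}$, uniformly in $0\le k\lesssim\log_2(\|x\|/r)$. Since $\mathbf P_y(H_r<\infty)\asymp 2^{-k(d-2)}$ on $A_k$, the contribution of $A_k$ to the above sum is of order
\[
4^kr^2\cdot 2^{-k(d-2)} \;=\; r^2\cdot 2^{k(4-d)}.
\]
Summing over $k$ then produces a geometric series of order $r^2$ in $d\ge 5$ and a harmonic series of order $r^2\log(\|x\|/r)$ in $d=4$, which is exactly the required lower bound.

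The main obstacle is the comparison between $\mathbb P^x_{\partial B_r}$ and the Doob $h$-transform, which is sharp precisely in the critical dimension $d=4$. Writing spine law $=\prod_i(1-k_{\partial B_r}(\Gamma(i)))\cdot(\mathbf P_x(H_r<\infty)/h(x))\cdot$ Doob law, one has to control the cumulative killing along Doob paths. Since $k_{\partial B_r}(y)\lesssim h(y)\le\mathbf P_y(H_r<\infty)$ (immediate from $\tilde F(1-t)=1-\frac{\sigma^2}{2}t+O(t^2)$ and the subharmonicity of $h$), the expected cumulative killing under the Doob law is bounded by $\sum_y G_{B_r}(x,y)\mathbf P_y(H_r<\infty)\asymp r^{d-4}/\|x\|^{d-4}$, which is $\lesssim 1$ in $d\ge 4$ for $\|x\|\ge 2r$; consequently, by Markov's inequality, the event $\{\sum_i k_{\partial B_r}(\Gamma(i))\le C_0\}$ has positive Doob-probability on which the two laws differ by a multiplicative constant, and on which the above dyadic lower bound is retained. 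In $d\ge 5$ the bound is comfortable, but in $d=4$ the cumulative killing is only of constant order, so the comparison is tight and must be carried out carefully; that is where the genuine work of the proof lies.
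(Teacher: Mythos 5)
The paper does not actually give a proof of this statement: it cites \cite{Zhu1} for $d\ge 5$ and \cite{Zhu4} for the $d=4, r=1$ case, and leaves the general $r$ in $d=4$ as a routine adaptation. Your reconstruction has the right high-level architecture (reduce to a lower bound on $\mathbb E_x[|\eta_r|\mid|\eta_r|>0]$ via the first-moment factorisation, then use the spine decomposition and a dyadic estimate), and the dyadic count $4^k r^2\cdot 2^{-k(d-2)}=r^2\cdot 2^{k(4-d)}$ per annulus, summing to $r^2$ for $d\ge 5$ and $r^2\log(\|x\|/r)$ for $d=4$, is exactly the mechanism that produces the claimed exponents. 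However, the last paragraph, which you correctly identify as the crux, contains a genuine gap.

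The claimed asymptotic $\sum_y G_{B_r}(x,y)\,\mathbf P_y(H_r<\infty)\asymp r^{d-4}/\|x\|^{d-4}$ is incorrect. For $y$ in the dyadic annulus $A_k=\{2^kr\le\|y\|<2^{k+1}r\}$ one has $|A_k|\asymp (2^k r)^d$, $G_{B_r}(x,y)\asymp \|x\|^{2-d}$, and $\mathbf P_y(H_r<\infty)\asymp 2^{-k(d-2)}$, so the contribution of $A_k$ is $\asymp r^d\|x\|^{2-d}4^k$, and the sum over $k\lesssim\log_2(\|x\|/r)$ is $\asymp r^{d-2}\|x\|^{4-d}$, which differs from your claim by a factor $r^2$ (for $d=4$ it is $\asymp r^2$, not $\asymp 1$; and in $d=4$ the tail $\|y\|\ge\|x\|$ is in fact not even convergent without the Doob weight). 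More importantly, what you actually need is the cumulative killing under the Doob law, i.e.\ $\sum_y G_{B_r}(x,y)\frac{u(y)}{u(x)}k(y)$ with $u(y)=\mathbf P_y(H_r<\infty)$. With the a priori bound $k(y)\lesssim h(y)\le u(y)$ this evaluates to $\Theta(r^2)$ in $d\ge 5$ and $\Theta(r^2\log(\|x\|/r))$ in $d=4$, which is not $O(1)$, so Markov's inequality does not isolate an event of positive Doob-probability with bounded cumulative killing. One could hope to improve the a priori bound to $k(y)\lesssim r^{d-4}/\|y\|^{d-2}$, which would bring the cumulative killing down to $\Theta(1)$ in $d\ge 5$ (and $\Theta(\log\log(\|x\|/r))$ in $d=4$, still unbounded), but that bound on $k\asymp h$ is precisely the statement being proved, so the argument is circular. (For $r=1$ the lossy bound $k\lesssim u$ happens to be sharp, which is why this step is harmless in \cite{Zhu3,Zhu4}; for general $r$ it is off by $r^2$.) Thus the comparison between $\mathbb P^x_{\partial B_r}$ and the Doob $h$-transform, as you set it up, does not close, especially in the critical dimension $d=4$ where the cumulative killing genuinely diverges; one needs either a sharper a priori input on $k_{\partial B_r}$ (e.g.\ via Proposition~\ref{prop.hit.inside} and the spine Markov property), or the recursive/bootstrapping argument of Zhu rather than a one-shot Markov-inequality comparison.
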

\begin{proof}
The result in dimensions at least five is given in \cite{Zhu1}, and the proof in dimension four is identical to the case $r=1$, which is done in \cite{Zhu4}. 
We leave the details to the reader. 
\end{proof}

\begin{remark}\label{rem.hit.adjoint}
\emph{We shall also need the result of the proposition for an adjoint BRW. For this, one can use a union bound, by summing over all the children of the root. Since the expected degree of the root in an adjoint BGW tree is finite, under the hypothesis that the offspring distribution has finite second moment, the result for the adjoint BRW follows from Proposition~\ref{prop.hit.upper}.  }
\end{remark}


\subsection{Proof of Theorem~\ref{theo.main} for short times}

We deal in this section with  $t\le r^4$.

\paragraph{The upper bound.}
We use that $\ell_\T(B_r)$ coincides 
essentially with the total size of the BGW tree $\T$. 
Indeed, using~\eqref{Kol} and Markov's inequality, yields
\begin{align*}
\mathbb P(\ell_\T(B_r) > t) & 
\le \mathbb P(|\mathbf{\mathcal T}|>t)  \le  \mathbb P(Z_{\sqrt t}\neq 0) + \mathbb P(1+Z_1+\dots + Z_{\sqrt t} > t) \\
&\lesssim \frac 1{\sqrt t} + \frac 1t \cdot \mathbb E[Z_1+\dots+ Z_{\sqrt t}]  \lesssim \frac 1{\sqrt t}, 
\end{align*}
giving the desired upper bound. 

\paragraph{The lower bound.} 
We divide the proof in two steps. 
We first show that we can bring of the order of $\sqrt t$ 
particles in $B_{r/2}$ at a cost of order $1/\sqrt t$, 
and then we show that conditionally on this event, the union of the BRWs emanating from these particles are likely to spend a time $t$ in the ball $B_r$.

Let us start with the first step. Let $r_0 := t^{1/4}/2$. Note that by hypothesis $r_0 \le r/2$.  
By Proposition~\ref{prop.hit.inside}, one has for some constant $C>0$, 
$$\mathbb P(|\eta_{r_0}|> 0) \le C/\sqrt t,$$ 
and since $\mathbb E[|\eta_{r_0}|]=1$,
we have, for some constant $c>0$,  
$$\mathbb E\big[|\eta_{r_0}|\ \big|\ |\eta_{r_0}|> 0\big]\ge 
c\sqrt t. $$ 
Then, by using Paley-Zygmund's inequality \reff{PZ} (with $\epsilon=1/2$), 
and the second
moment estimates in the proof of Lemma~\ref{lem.hit.inside}, we deduce that for some constant $\rho_1>0$, 
\begin{equation}\label{lower.1st}
\mathbb P(|\eta_{r_0}|\ge \rho_1\sqrt t) \ge \frac{\rho_1}{\sqrt t},
\end{equation}
concluding the first step.

Now we move to the second step. 
Let $X:=\sum_{\sqrt t/4\le k\le {\sqrt t} } Z_k$.  
We first claim that for some $c_0>0$ (independent of $t$ and $r$) we have
\begin{equation}\label{lower.BGW1}
\mathbb P\big(X \ge  c_0 t\big) \ge\frac{c_0}{\sqrt t}.
\end{equation}  
Indeed, observe that $\mathbb E[X] \ge \frac{1}{2}\sqrt t$, and 
thus by Kolmogorov's estimate~\eqref{Kol}, one has for some $c>0$, 
\[
\mathbb E[X\mid X\neq 0] \ge ct.
\]
On the other hand, using \eqref{Zn}, we get 
$$
\mathbb E[X^2] \le \sqrt t\cdot 
\sum_{\sqrt t/4\le i\le \sqrt t}  
\mathbb E[Z_i^2] \le t^{3/2},
$$ 
and~\eqref{lower.BGW1} follows using Paley-Zygmund's inequality~\eqref{PZ}.

Now, define 
\begin{equation}\label{LB-4}
Y= \sum_{\sqrt t/4\le |u|\le \sqrt t} \1\{S_u \in B_r\}.
\end{equation}
One has for any $z\in \partial B_{r_0}$, 
$$
\mathbb E_z[Y\mid \mathbf {\mathcal T}] =  
\sum_{\sqrt t/4\le k\le\sqrt t} Z_k \cdot \mathbf P_z(S_k \in B_r) 
\ge \delta\cdot X, 
$$
with 
\begin{equation*}
\delta:=\inf_{x\in B_r}\inf_{k\le r^2} \mathbf P_x(S_k\in B_r)>0.
\end{equation*}
On the other hand, by definition one has 
$Y \le X$, and thus~\eqref{PZ1} 
gives 
$$
\mathbb P_z(Y \ge c_0\delta t/2 \mid \mathbf {\mathcal T}) 
\cdot \1\{X\ge c_0t\}
\ge \frac{\frac{1}{4}\mathbb E_z\big[ Y\mid \mathbf {\mathcal T}\big]^2}
{\mathbb E_z\big[Y^2\mid \mathbf {\mathcal T}\big]}\cdot \1\{X\ge c_0t \}
 \ge \frac{\delta^2}{4}\cdot \1\{X\ge c_0t\}. 
$$ 
Note also that by definition $Y\le \ell_{\mathcal T}(B_r)$. Hence, taking expectation on 
both sides of the last inequality and plugging~\eqref{lower.BGW1}, gives with $\rho_2:= c_0\delta^2/4$, 
\begin{equation}\label{LB-1}
 \inf_{z\in \partial B_{r_0}} \mathbb P_z(\ell_{\mathcal T}(B_r)\ge \rho_2 t) \ge \frac{\rho_2}{\sqrt t}.
 \end{equation}  
To conclude, notice that, as mentioned at the beginning of the proof, the time spent in $B_r$ is larger than the time spent by 
the union of the BRWs emanating from the particles in $\eta_{r_0}$ (which by definition are on $\partial B_{r_0}$). 
Hence, denoting by $\mathcal N$ a Binomial random variable with number of trials $\lfloor \rho_1\sqrt t\rfloor $ and probability of success $\frac{\rho_2}{\sqrt t}$, we get using  \eqref{lower.1st} together with \eqref{LB-1}, 
 for some constant $\rho>0$ (independent of $t$ and $r$), 
 $$\mathbb P(\ell_{\mathcal T}(B_r) \ge t) \ge\mathbb P(\ell_{\mathcal T}(B_r) \ge t, |\eta_{r_0}|\ge \rho_1\sqrt t) \ge \mathbb P(\mathcal N\ge 1/\rho_2)\cdot \mathbb P(|\eta_{r_0}|\ge \rho_1\sqrt t) \ge \frac{\rho}{\sqrt t}, $$ 
 as wanted.

 This concludes the proof of Theorem \ref{theo.main} in case $t\le r^4$. \hfill $\square$


\section{Proof of Theorem~\ref{theo.main} in low dimension}\label{sec-LD}
In dimension one, two and three, the proofs are easier, and only 
require small moment estimates, 
which enables us to use the results from \cite{AHJ}. 
More precisely, in dimension one and two both the lower and upper bounds only require estimates of the 
first and second moment, 
and in dimension three we need an additional third moment. We define for $d\in \{1,2,3\}$,
\begin{equation*} 
R = (t/r^d)^{\frac{1}{4-d}}, \quad \text{and}\quad N= R^2. 
\end{equation*}
Recall that one can assume here that $t\ge r^4$, in which case it amounts to show that 
$$\mathbb P(\ell_\T(B_r) > t) = \Theta (R^{-2}).$$
The proof is based on the following result, which is given by Lemma 4.3 in \cite{AHJ}. For $n\ge 0$, and $x\in \mathbb Z^d$, write 
$$\ell_n(x) = \sum_{|u|\le n} \1\{S_u= x\}.  $$ 
\begin{lemma}[\cite{AHJ}] \label{lem.AHJ}
If $\mu$ is critical and has a finite second moment, then 
\begin{equation*}
\sup_{x\in \mathbb Z^2} \mathbb E[\ell_n(x)^2] \lesssim n.  
\end{equation*} 
If additionally $\mu$ has a finite third moment, then 
$$\sup_{x\in \mathbb Z^3} \mathbb E[\ell_n(x)^3]\lesssim \sqrt n. $$
\end{lemma}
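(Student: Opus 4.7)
The plan is to reduce both moment bounds to explicit multi-indexed sums of random-walk transition probabilities via the many-to-$k$ formulas for the BRW, and then to estimate those sums using the local central limit theorem (LCLT) $\sup_z \mathbf P(S_n = z)\lesssim n^{-d/2}$ together with the convolution identity $\sum_y \mathbf P(S_a = y)\mathbf P(S_b = z - y) = \mathbf P(S_{a+b} = z)$.

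For the second moment, I would classify ordered pairs $(u,v)$ of distinct vertices by the generation $m$ of their least common ancestor $w$ and by the walk-lengths $i = |u| - m$, $j = |v|-m$. Since conditional on $S_w$ the two walks beyond $w$ are independent of lengths $i, j$, and the mean number of such pairs at fixed $(m, i, j)$ is $\sigma^2 = \mathbb E[\xi(\xi-1)]$ (using $\mathbb E[Z_k]=1$), this yields
\begin{equation*}
\mathbb E[\ell_n(x)^2] \lesssim \mathbb E[\ell_n(x)] + \sum_{m\ge 0}\ \sum_{\substack{i, j\ge 1\\ m+\max(i,j)\le n}}\, p_{m,i,j}(x),
\end{equation*}
where $p_{m,i,j}(x):=\sum_y\mathbf P(S_m=y)\mathbf P(S_i=x-y)\mathbf P(S_j=x-y)$, the ancestor–descendant pairs contributing a strictly smaller error that I would handle separately. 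The diagonal term $\mathbb E[\ell_n(x)] = \sum_{k\le n} \mathbf P(S_k = x)$ is of order $n^{1/2}, \log n, 1$ in $d=1,2,3$ by LCLT, and is lower-order. Combining LCLT on one factor with the convolution identity absorbing the $S_m$-sum into the other two yields, for $i\le j$, the uniform bound $p_{m,i,j}(x) \lesssim j^{-d/2}(m+i)^{-d/2}$.

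The remaining step is to evaluate the triple sum carefully in each dimension. For $d=1$: $\sum_{i=1}^j (m+i)^{-1/2}\lesssim\sqrt{j}$ and then $\sum_{j\le n-m} j^{-1/2}\sqrt{j}\lesssim n$, giving the total $\lesssim n^2$. For $d=2$: $\sum_{i=1}^j (m+i)^{-1}\lesssim 1+\log((m+j)/(m+1))$, and then $\sum_{j\le n-m}(1+\log(j/(m+1)))/j\lesssim (1+\log((n-m)/(m+1)))^2$, which sums over $m$ to $\lesssim n$ using $\int_0^1\log^2(1/u)\,du<\infty$. For $d=3$: I would use the sharper estimate $\sum_{i=1}^j(m+i)^{-3/2}\lesssim\min\!\bigl(j(m+1)^{-3/2},(m+1)^{-1/2}\bigr)$ (split at $j = m$) to obtain $\sum_{j} j^{-3/2}(\cdots)\lesssim (m+1)^{-1}$, and then $\sum_{m\le n}(m+1)^{-1}\lesssim\log n$. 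The main subtlety is precisely this critical $d=3$ calculation, in which the naive LCLT bound would lose a $\sqrt{n}$ factor.

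For the third moment I would apply the many-to-three decomposition. Triples of distinct vertices classify into two tree-shapes: a `Y' in which all three descend from a common ancestor through three distinct subtrees (contributing the finite third factorial moment $\mathbb E[\xi(\xi-1)(\xi-2)]$, by hypothesis), and a `caterpillar' with two successive branching points at levels $m_1<m_2$ (contributing $\sigma^4$). Each shape produces a five- or six-fold convolutional sum in the random-walk transition probabilities along the segments; successive applications of the convolution identity to collapse shared segments, followed by LCLT bounds on the remaining factors, yield the stated $n^{(10-3d)/2}$ in every dimension. The main obstacle, as above, is the careful order of summation in the borderline cases so as not to accrue spurious logarithmic or polynomial factors, mirroring the $d=3$ argument used for the second moment.
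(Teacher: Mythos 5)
The paper does not prove this lemma but cites it as Lemma 4.3 of [AHJ]; your proposal reconstructs that moment-method proof, so the approach is the intended one. Your second-moment argument is complete and correct: the classification of pairs by the generation $m$ of the least common ancestor and walk-lengths $i,j$, the bound $p_{m,i,j}(x)\lesssim j^{-d/2}(m+i)^{-d/2}$ via LCLT on the longest leg plus Chapman--Kolmogorov on the other two, the verification that the ancestor--descendant and diagonal contributions are lower order, and especially the sharper $\sum_{i\le j}(m+i)^{-3/2}\lesssim\min\bigl(j(m+1)^{-3/2},(m+1)^{-1/2}\bigr)$ bound in $d=3$ needed to avoid losing a factor $\sqrt n/\log n$, are all handled properly.

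The third-moment part, however, is a plan rather than a proof. The Y/caterpillar decomposition is right, the combinatorial weights $\mathbb E[\xi(\xi-1)(\xi-2)]$ and $\sigma^4$ are right, and the finite third moment of $\mu$ enters exactly at the Y-vertex. But you do not carry out the sums, and the caterpillar sum in $d=3$ genuinely produces a spurious $\log n$ if one does not repeat the same $\min$-splitting device: writing $J=\ell+i_2$ (with a $J^{1/2}$ weight from summing $i_2^{-1/2}$ over $(\ell,i_2)$ pairs with $\ell+i_2=J$) and bounding $\sum_{i_1\le J}(m_1+i_1)^{-3/2}$ by $\min\bigl(J(m_1+1)^{-3/2},(m_1+1)^{-1/2}\bigr)$ before summing over $J$ and then $m_1$ does give $n^{1/2}$, but the naive order of summation gives $n^{1/2}\log n$. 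Since the critical-case bookkeeping is the entire content of the lemma, this should be written out, not deferred to an analogy; once it is, the argument closes.
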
 
As a consequence, writing $\ell_N(B_r):= \sum_{x\in B_r} \ell_N(x)$, we get   
\begin{eqnarray*}
\begin{array}{llll}
 \mathbb E[\ell_N(B_r)^2]  & \lesssim &  
r^4 N &   \text{if }d=2 \\
 \mathbb E[\ell_N(B_r)^3] & \lesssim & r^9 \sqrt{N} &  \text{if }d=3.
 \end{array}
\end{eqnarray*}
Note also that by linearity, one has when $d=1$,  
$$\mathbb E[\ell_N(B_r)] \lesssim r \sqrt N. $$ 
Therefore, for any $d\in \{1,2,3\}$, 
$$\mathbb P(\ell_\T(B_r) >t ) \le \mathbb P(Z_N \neq 0) + \mathbb P(\ell_N(B_r) >t )\le \frac 1N + \frac { \mathbb E[\ell_N(B_r)^d]}{t^d}\lesssim R^{-2}.$$

For the lower bounds we use Paley-Zygmund's inequality~\eqref{PZ}, 
which we apply with
$$
X:= \ell_{2MN}(B_r) - \ell_{MN}(B_r) = \sum_{k=MN+1}^{2MN}\,  
\sum_{|u|=k} \1\{S_u\in B_r\},
$$
where $M$ is some well chosen integer to be fixed later. 
We need the following first moment bounds: if $t\ge r^4$ (equivalently $N\ge r^2$), 
\begin{eqnarray*}
\mathbb E[X] & = & \sum_{k=MN+1}^{2MN}  \mathbb E\left[
\sum_{|u|=k} \1\{S_u \in B_r\}\right]  
 = \sum_{k=MN+1}^{2MN} \mathbb E[Z_k] \cdot \mathbf P(S_k \in B_r)  =   \sum_{k=MN+1}^{2MN}\mathbf P(S_k \in B_r) \\
 & \gtrsim & \left\{
 \begin{array}{ll}
 r\sqrt N & \text{if }d=1 \\
 r^2 & \text{if }d=2 \\
 r^3/\sqrt{N} & \text{if }d=3.
 \end{array}
 \right. 
 \end{eqnarray*} 
Note also that $\mathbb P(X>0) \le \mathbb P(Z_{MN}>0) \sim \frac2 {\sigma^2MN}$. Thus if $M$ is chosen large enough, in any dimension $d\in \{1,2,3\}$, for $t\ge r^4$, 
\begin{equation*}
\mathbb E[X\mid X>0] \geq t. 
\end{equation*} 
It follows that in dimensions one and two, 
$$\mathbb P(\ell_\T(B_r)\ge t ) \ge \mathbb P(X\ge t ) \gtrsim R^{-2}. $$ 
In dimension three we need to use a third moment asymptotic, due to the presence of a $\log $ term in the second moment. 
As noticed in~\cite[Lemma 4.4]{AHJ}, for a nonnegative random variable with a finite third moment: 
$$\mathbb P(X\ge \varepsilon \mathbb E[X\mid X>0] )\ge \frac{(1-\varepsilon)^{3/2}\mathbb E[X]^{3/2}}{\mathbb E[X^3]^{1/2}}.$$
Applying this with $X$ as above, 
we get as well $\mathbb P(\ell_\T(B_r)>t ) \gtrsim R^{-2}$, concluding the proof of Theorem~\ref{theo.main} in dimensions one, two and three.


\section{Proof of Proposition~\ref{prop.Tr}}\label{sec-Tr}
In this section, we deal with the exponential moments of 
$ |\mathcal T(B_r)|/r^4$. It amounts to show that there are positive constants $c$ and $\lambda_0$, such that for
any $\lambda<\lambda_0$, and any $r\ge 1$,
$$
\sup_{x\in B_r} \ \mathbb E_x\Big[
\exp\big(\lambda\frac{ |\mathbf{\mathcal T}(B_r)|}{r^4}\big)\Big] 
\le \exp(c\frac{\lambda}{r^2}).
$$
Assume without loss of generality that $r^2\in \mathbb N$. Recall the notation \eqref{ZnLambda} for $\mathcal Z_n(B_r)$, and let $Z_n(B_r) := |\mathcal Z_n(B_r)|$.   
Then for $0\le j<r^2$, we define 
\begin{equation*}
\Upsilon_j=\sum_{i=0}^\infty Z_{ir^2+j}(B_r).
\end{equation*}
Note that $ |\mathcal T(B_r)|=\sum_{j<r^2} \Upsilon_j$, and by Holder's
inequality
\begin{equation}\label{old-2}
\bE_x\Big[\exp(\lambda \frac{|\mathcal T(B_r)|}{r^4})\Big]=
\bE_x\Big[\prod_{j<r^2}\big(
\exp(\lambda\frac{\Upsilon_j}{r^2})\big)^{1/r^2}\Big]\le
\sup_{j<r^2} \bE_x\big[\exp(\lambda\frac{\Upsilon_j}{r^2})\big].
\end{equation}
Thus, we need a uniform exponential moment on the family
$\{\Upsilon_j/r^2\}_{j<r^2}$.
Recall \eqref{Hr} and note that for any $x\in B_r$, and $k\ge 0$, $\mathbb E_x[Z_k(B_r)] \le \mathbf P_x(H_{2r}\ge k)$. Furthermore, 
it is well-known that
there is $\rho<1$, such that
$$\sup_{r\ge 1} \sup_{x\in B_r} \mathbf P_x(H_{2r} \ge r^2)\le \rho.$$
It follows from these last two observations that for any $r\ge 1$, 
$$
\sup_{j<r^2}\sup_{x\in B_r} \mathbb E_x[Z_{j+r^2}(B_r)]  
\le  \sup_{x\in B_r} \mathbf P_x(H_{2r}\ge r^2) \le \rho.
$$
Let $\lambda_0$ be such that the conclusion of Lemma~\ref{lem.BGW} holds. 
Then, using that $Z_{j+r^2}(B_r)\le Z_{j+r^2}$, 
we get for some constant $c>0$ (that might change from line to line, and depend on $\lambda_0$), 
that for any $\lambda\le \lambda_0$, any $j<r^2$ and $x\in B_r$
\begin{equation*}
\begin{split}
\bE_x\big[\exp(\lambda \frac{Z_{j+r^2}(B_r)}{r^2})\big]& 
\le 1+ \frac{\rho \lambda}{r^2} + \lambda^2\mathbb E
\big[\frac{Z_{j+r^2}^2}{r^4}\exp(\lambda \frac{Z_{j+r^2}}{r^2})\big]\\
& \le 1+ \frac{\rho \lambda}{r^2} +
 c\lambda^2
\mathbb E\big[\exp(\lambda_0\frac{Z_{j+r^2}}{r^2})
\mid Z_{j+r^2}\neq 0\big]\cdot \mathbb P(Z_{j+r^2} \neq 0) \\
& \le 1+  \frac{\rho \lambda}{r^2} +\frac{c \lambda^2}{r^2},
\end{split}
\end{equation*}
where for the last inequality we used \eqref{Kol} and Lemma~\ref{lem.BGW}.
We deduce that there exists (some possibly smaller) $\lambda_0$, and $\gamma<1$, such that for 
all $0\le \lambda\le \lambda_0$, and all $r\ge 1$, 
\begin{equation}\label{old-4}
\sup_{x\in B_r}\sup_{j<r^2}
\bE_x\big[\exp(\lambda \frac{Z_{j+r^2}(B_r)}{r^2})\big]
\le  \exp(\frac{(1-\gamma)\lambda}{r^2}). 
\end{equation}
Now for $u\in \mathcal T$, and $n\ge 0$, we write $Z^u_n(B_r)$ for the random variable with the same law as $Z_n(B_r)$ but translated in the subtree emanating from vertex $u$. 
Note that for $i\ge 1$, 
\begin{equation*}
Z_{ir^2+j}(B_r)=\sum_{u\in \mathcal Z_{(i-1)r^2+j}(B_r) }
 Z^u_{r^2}(B_r). 
\end{equation*}
Then \reff{old-4} implies (after successive conditioning) that for $\lambda\le \lambda_0$, 
\begin{equation}\label{old-6}
\bE_x\Big[\exp\big( \frac{\lambda}{r^2}
\sum_{i=1}^\infty Z_{i r^2+j}(B_r)
-\frac{(1-\gamma)\lambda}{r^2} 
\sum_{i=1}^\infty  Z_{(i-1)r^2+j}(B_r) \big)\Big]\le 1.
\end{equation}
The next step is to choose $p$ and $q$ such that $q^2(1-\gamma)=1$ and $1/p+1/q=1$. 
By using \eqref{old-6} and applying Holder's inequality twice, we get that for $\lambda \le \frac{\lambda_0}{pq}$, 
\begin{equation*}
\begin{split}
\bE_x\big[\exp( \frac{\lambda}{r^2}\Upsilon_j)\big]& = \mathbb E_x\Big[\exp(
 \frac{\lambda}{r^2} Z_j(B_r))\cdot \exp\big(  \frac{\lambda}{r^2}
\sum_{i=1}^\infty Z_{ir^2+j}(B_r)\big)\Big]\\
& \le  \Big(\mathbb E_x\big[\exp(\frac{p\lambda}{r^2} Z_j)\big]\Big)^{1/p}
\Big(\mathbb E_x\big[\exp(\frac{q^2(1-\gamma)\lambda}{r^2}\Upsilon_j)\big]\Big)^{1/q^2}. 
\end{split}
\end{equation*}
Then \eqref{goal.Zn} gives for some constant $C>0$, and $\lambda$ small enough, 
\begin{equation*}
\bE_x\big[\exp( \frac{\lambda}{r^2}\Upsilon_j)\big]\le
\Big(\mathbb E_x\big[\exp(\frac{p\lambda}{r^2} Z_j)\big]\Big)^{\frac 1{\gamma p}}
\le e^{C \lambda/r^2}.
\end{equation*}
Now, using \reff{old-2}, the result follows. \hfill $\square$


\section{Proof of Theorem~\ref{theo.inside}}\label{sec-inside}
First note that as mentioned in the introduction, 
\eqref{inside1} follows from \eqref{inside2} and Lemma~\ref{lem.hit.inside}. 
Indeed, these imply that for some positive constants $c$ and $c'$, 
for all $x\in B_r$, and all $\lambda<\lambda_0$, 
\begin{equation*}
\begin{split}
\mathbb E_x\Big[\exp(\lambda \frac{|\eta_r|}{r^2})
\ \big| \ \eta_r\neq \emptyset\Big] 
=& 1+\frac{\mathbb E_x\Big[\exp(\lambda \frac{|\eta_r|}{r^2})\Big]
-1}{\mathbb P_x(|\eta_r|>0)}  \\
\le& 1+ cr^2 \Big(\exp(\frac{c\lambda}{r^2}) - 1\Big)  \le
1+c'\lambda \le \exp(c'\lambda).  
\end{split}
\end{equation*}
We now move to the proof of \eqref{inside2} and \eqref{inside3}.  
For $n\ge 0$, we use the notation $\mathcal T_n(\Lambda):=
\{u\in \mathcal T(\Lambda): |u|\le n\}$, and 
$\eta^n_r:=\{u\in \eta_r: |u|\le n\}$. Recall also the notation~\eqref{ZnLambda}. 
The main point is to observe that $\eta^n_r$ and $\mathcal T_n(B_r)$
are linked via some martingale $\{M_n\}_{n\ge 0}$, defined for $n\ge 0$, by 
\begin{equation}\label{def-Mar1}
M_n:= \sum_{u\in \mathcal Z_n(B_r)}  
\|S_u\|^2 +  \sum_{u\in \eta^n_r} \|S_u\|^2-
\Big(|\mathcal T_n(B_r)|+|\eta^n_r|\Big).
\end{equation}
The next lemma gathers the results needed about this process. 
\begin{lemma}\label{lem.M}
The following hold for the process $\{M_n, n\ge 0\}$: 
\begin{enumerate}
\item it is a martingale, with respect to the 
filtration $(\mathcal F_n)_{n\ge 0}$, where 
$\mathcal F_n = \sigma(\mathbf{\mathcal T}_n, \{S_u\}_{|u|\le n})$.  
\item Furthermore, it converges almost surely towards 
\[
M_\infty := \sum_{u\in \eta_r} \|S_u\|^2- 
\big(|\mathbf{\mathcal T}(B_r)|+
|\eta_r|\big). 
\] 
\end{enumerate}
\end{lemma}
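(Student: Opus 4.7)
\textbf{Proof proposal for Lemma~\ref{lem.M}.} The plan is to verify the martingale property by a direct one-step computation, and then derive the almost-sure limit from the a.s.\ extinction of the critical BGW tree. The two ingredients I will use are: (i) since the jump law is supported on the unit neighbors of the origin, one has $\|X_e\|=1$ a.s.\ and, being centered, $\mathbf E_x[\|S_1\|^2] = \|x\|^2 + 1$; (ii) for every $u\in \mathcal Z_n(B_r)$, each child $v$ of $u$ satisfies $\|S_v\|\le \|S_u\|+1 < r+1$, so either $S_v\in B_r$ (then $v\in \mathcal Z_{n+1}(B_r)$) or $S_v\in \partial B_r$ (then $v\in \eta_r^{n+1}\setminus \eta_r^n$). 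Vertices already frozen in $\eta_r^n$ do not contribute further to either the $\mathcal Z$-part or the $\eta$-part of $M_{n+1}$.

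Using (ii), if $C_u$ denotes the children of $u$, I obtain the exact decompositions
\begin{equation*}
\sum_{u\in \mathcal Z_{n+1}(B_r)}\!\!\|S_u\|^2 + \!\!\sum_{u\in \eta_r^{n+1}\setminus \eta_r^n}\!\!\|S_u\|^2
\;=\; \sum_{u\in \mathcal Z_n(B_r)}\sum_{v\in C_u}\|S_v\|^2,
\qquad
|\mathcal Z_{n+1}(B_r)|+|\eta_r^{n+1}|-|\eta_r^n|
\;=\;\sum_{u\in \mathcal Z_n(B_r)}\xi_u.
\end{equation*}
Conditioning on $\mathcal F_n$, using $\mathbb E[\xi_u]=1$ together with ingredient (i), each $u\in \mathcal Z_n(B_r)$ contributes $\|S_u\|^2+1$ in expectation to the first sum and $1$ in expectation to the second. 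Since $|\mathcal T_{n+1}(B_r)|-|\mathcal T_n(B_r)|=|\mathcal Z_{n+1}(B_r)|$, the increment $M_{n+1}-M_n$ per $u\in \mathcal Z_n(B_r)$ has conditional mean $(\|S_u\|^2+1)-\|S_u\|^2-1=0$, proving item~1.

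For item~2, the three sequences $|\mathcal T_n(B_r)|$, $|\eta_r^n|$ and $\sum_{u\in \eta_r^n}\|S_u\|^2$ are nondecreasing in $n$ and converge monotonically to their full counterparts $|\mathcal T(B_r)|$, $|\eta_r|$ and $\sum_{u\in \eta_r}\|S_u\|^2$. The only remaining term in $M_n$ is $\sum_{u\in \mathcal Z_n(B_r)}\|S_u\|^2$, which is bounded by $r^2\,|\mathcal Z_n(B_r)|\le r^2\,Z_n$. Since $\mu$ is critical with finite variance (hence nondegenerate), the BGW tree $\mathcal T$ is a.s.\ finite, so $Z_n=0$ for all $n$ large enough, and this residual term vanishes almost surely. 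Combining these convergences yields $M_n\to M_\infty$ a.s., with $M_\infty$ as claimed.

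The only subtle point is the bookkeeping in the partition of the children of $\mathcal Z_n(B_r)$; once the identities above are in place the rest is an immediate consequence of $\mathbb E[\xi_u]=1$ and $\mathbf E_x[\|S_1\|^2]=\|x\|^2+1$, so I do not foresee a genuine obstacle.
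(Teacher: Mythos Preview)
Your proof is correct and follows essentially the same approach as the paper. The paper computes the increment directly as
\[
\nabla M_n=\sum_{u\in \mathcal Z_n(B_r)}\Big((\xi_u-1)\|S_u\|^2+\sum_{v\in \mathcal N(u)}(\|S_v\|^2-\|S_u\|^2-1)\Big),
\]
which is algebraically equivalent to combining your two bookkeeping identities; both then conclude item~1 from $\mathbb E[\xi_u]=1$ and $\mathbb E_x[\|S_v\|^2\mid \mathcal F_n,\mathcal N(u)]=\|S_u\|^2+1$, and item~2 from the a.s.\ finiteness of the critical tree.
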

\begin{proof}
The second part of the lemma is immediate  
since almost surely the tree $\mathcal T$ is finite, and thus $\mathcal Z_n(B_r)= \emptyset$, 
for all $n$ large enough. 
So let us prove the first point now. Set $\nabla M_n := M_{n+1}- M_n$, 
for $n\ge 0$, and for $u\in \mathbf{\mathcal T}$,  
let $\mathcal N(u)$ be the set of children of $u$. 
Recall that $\xi_u = |\mathcal N(u)|$ by definition.  
Then note that 
\begin{align*}
&  \sum_{u\in \mathcal Z_{n+1}(B_r)} \|S_u\|^2 = \sum_{u\in \mathcal Z_n(B_r)} \sum_{v\in \mathcal N(u)} \|S_v\|^2 - 
\sum_{u\in \eta_r^{n+1}\setminus \eta_r^n} \|S_u\|^2, \\
& |\mathcal T_{n+1}(B_r)| -|\mathcal T_n(B_r)|  = |\mathcal Z_{n+1}(B_r)|, \\
& \sum_{u\in \mathcal Z_n(B_r)} \xi_u = |\mathcal Z_{n+1}(B_r)| + |\eta_r^{n+1}| - |\eta_r^n|, \\
\end{align*}
which altogether yield 
\begin{equation}\label{nablaM}
\nabla M_n  = \sum_{u\in \mathcal Z_n(B_r)} \Big((\xi_u - 1) \|S_u\|^2 + \sum_{v\in \mathcal N(u)} ( \|S_v\|^2  - \|S_u\|^2-1)\Big).
\end{equation} 
The result follows since for $u \in \mathcal Z_n(B_r)$, 
$\xi_u$ is independent of $\mathcal F_n$, hence of $\|S_u\|$, and moreover, since the jump 
distribution of $S$ is centered and supported on the set of 
neighbors of the origin, one has for any $v\in \mathcal N(u)$, by Pythagoras, 
$$\mathbb E_x\Big[ \|S_v\|^2  \mid \mathcal F_n, \, \mathcal N(u)\Big] =  \|S_u\|^2 + 1. $$ 
\end{proof}

We can now finish the proof of the theorem, 
by showing \eqref{inside2} and \eqref{inside3}. 

\begin{proof}[Proof of \eqref{inside2}]
Observe first that when $1\le r\le 2$, the result follows from Proposition~\ref{prop.Tr}, since $\eta_r \subset \mathcal T(B_{r+1})$, for any $r\ge 1$. Hence one can assume now that $r\ge 2$. 
By definition, one has 
\[
\sum_{u\in \eta_r}\|S_u\|^2\ge r^2
|\eta_r|.
\]
Thus in view of Proposition~\ref{prop.Tr} and Lemma~\ref{lem.M}, 
it just amounts to show that $M_\infty/r^4$ has some finite exponential moment. To see this, first recall that by assumption the offspring distribution has some finite 
exponential moment. Therefore, for any $u\in \mathbf {\mathcal T}$, 
and $\lambda$ small enough, 
$$
\mathbb E \big[\exp(\lambda \frac{\xi_u - 1}{r^2})\big] \le 1+ c\frac{\lambda^2}{r^4} \le \exp(\frac{c\lambda^2}{r^4}),$$
for some constant $c>0$. Likewise, if $X$ has distribution $\theta$, then for any $z\in B_r$, by Cauchy-Schwarz, 
$$\big|\|z+X\|^2 - \|z\|^2 - 1\big| \le 2r \ \text{ and }\ \mathbb E\big[\|z+X\|^2\big]  =  \|z\|^2 + 1.$$
Therefore, there exists $c>0$, such that for every $\lambda\le 1$, 
$$\mathbb E \Big[\exp\big(\lambda\cdot  \frac{\|z+X\|^2 - \|z\|^2 - 1}{r^4}\big)\Big] \le 1+ c\frac{\lambda^2}{r^6} \le \exp(c\frac{\lambda^2}{r^6}).$$ 
It follows, using \eqref{nablaM} and bounding $\|S_u\|^2$ by $r^2$ in this formula, 
that for any $n\ge 0$, and $\lambda$ small enough, for some constant $c>0$, 
$$
\mathbb E_x\Big[\exp\big(\lambda \frac{\nabla M_n}{r^4}\big) 
\ \Big|\ \mathcal F_n\Big] \le \exp\big(c\lambda^2 
\frac{|\mathcal Z_n(B_r)|}{r^4}\big). 
$$ 
We deduce by successive conditioning, that for any $n\ge 1$, 
$$
\mathbb E_x\Big[\exp\Big(\frac{\lambda}{r^4}(M_n - M_0)
- c\lambda^2 \sum_{k= 0}^{n-1} \frac{|\mathcal Z_k(B_r)|}{r^4}\Big)\Big] \le 1.
$$
Note also that by definition for any $x\in B_r$, under $\mathbb P_x$,  
\[
M_0= (\|x\|^2-1) \le r^2,\quad
\text{ and }\quad \sum_{k\ge 0} |\mathcal Z_k(B_r)|\le |\mathcal T(B_r)|.
\] 
Hence, using Cauchy-Schwarz and Fatou's lemma, we obtain the existence of some positive constants $c$ and $\lambda_0$, such that for any $0\le \lambda\le \lambda_0$, any $r\ge 1$, and any $x\in B_r$, 
$$
\mathbb E_x\Big[\exp\big(\lambda\frac{M_\infty}{r^4}\big)\Big]
\le \liminf_{n\to \infty} \ \mathbb E_x\Big[\exp\big(\lambda\frac{M_n}{r^4}\big)\Big]
\le \mathbb E_x\Big[\exp\big(\frac{\lambda}{r^2}+
c\lambda^2\frac{|\mathcal T(B_r)|}{r^4}\big)\Big]. 
$$
Then \eqref{inside2} follows from Proposition~\ref{prop.Tr}. 
\end{proof}

\begin{proof}[Proof of \eqref{inside3}]
Note that for any $x\in B_r$, $\mathbb E_x\big[|\eta_r|\big] = 
\mathbf P_x(H_r<\infty) = 1$.
Therefore, by expanding the exponential, 
we find using Proposition~\ref{prop.hit.inside} and \eqref{inside2} at the third line, that for any $x\in B_{r/2}$, 
\begin{align*}
\mathbb E_x\Big[\exp(\lambda 
\frac{|\eta_r|}{r^2})\Big]  & \le 1 + \frac{\lambda}{r^2} + \frac{\lambda^2}{r^4} \mathbb E_x\Big[|\eta_r|^2\exp(\lambda \frac{|\eta_r|}{r^2})\Big]\\
&  = 1 + \frac{\lambda}{r^2} + \frac{\lambda^2}{r^4} \mathbb E_x\Big[|\eta_r|^2 \exp(\lambda \frac{|\eta_r|}{r^2})\ \Big|\ \eta_r \neq \emptyset \Big]\cdot \mathbb P_x(|\eta_r|>0) \\
&\le 1+ \frac{\lambda}{r^2} + c\frac{\lambda^2}{r^2} \mathbb E_x\Big[ \exp(\lambda_0 \frac{|\eta_r|}{r^2})\ \Big|\ \eta_r\neq \emptyset \Big] \le 1+ \frac{\lambda}{r^2} + c\frac{\lambda^2}{r^2}\\
&  \le \exp\big(\frac{\lambda + c\lambda^2}{r^2}\big), 
\end{align*}
where $\lambda_0$ is the constant appearing in the statement of \eqref{inside2}, and $c$ is another constant that might change from line to line (and depend on $\lambda_0$).  
\end{proof}


\section{Proof of Theorem~\ref{theo.outside5}}\label{sec-outside5}
\paragraph{Proof of \eqref{outside1}.}
We assume here that $d\ge 5$. Let $r\ge 1$ be given and $x$ satisfying $\|x\|\ge 2r$.  
We recall that $\eta_{r,R}=\eta_r\cap \mathcal T(B_R)$. We also define $\eta_{r,R}^u$, for $u\in \mathcal T$, as the random variable with the 
same law as $\eta_{r,R}$, but in the subtree emanating from $u$, and similarly for other variables with an additional upper script $u$.

Let $i_0$ be the smallest integer such that $\|x\|\le r2^{i_0}$.
Define $R_0 = r2^{i_0}$, $Z_{r,0}=|\eta_{r,R_0}|$. Let also for $i\ge 1$, 
\begin{equation}\label{def-R}
R_i := 2^{i}R_0,\quad\text{and}\quad 
Z_{r,i}:= \sum_{u\in \eta_{R_{i-1}} } |\eta^u_{r,R_i}|.
\end{equation}
Then by definition, under $\mathbb P_x$, 
\begin{equation*}
|\eta_r| \le \sum_{i\ge 0} Z_{r,i}.
\end{equation*}
Thus, by monotone convergence, one has for any $\lambda\ge 0$, $r\ge 1$, 
and $\|x\|\ge 2r$ 
\begin{equation}\label{link.etar.etari}
\mathbb E_x \Big[\exp\big(\lambda\frac{|\eta_r|}{r^2}\big)\Big]
=\lim_{I\to \infty} \ \mathbb E_x\Big[\exp\big(\lambda\frac{\sum_{i=0}^I 
Z_{r,i}}{r^2}\big)\Big]. 
\end{equation}
Furthermore, if for $i\ge 0$, we let $\mathcal G_i$ denote the sigma-field 
generated by the BRW frozen on $\partial B_{R_i}$, then 
on one hand, $Z_{r,j}$ is $\mathcal G_i$-measurable, for all $j\le i$, 
and conditionally on $\mathcal G_{i-1}$, $Z_{r,i}$ 
is a sum of $|\eta_{R_{i-1}}|$ 
independent random variables. 
It remains now to bound their exponential moment. 

\begin{proposition}\label{prop.phi.i}
Assume $d\ge 5$. For $r\ge 1$, $R\ge r$, and $\lambda\ge 0$, let
\begin{equation*}
\varphi_{r,R}(\lambda) := \sup_{R\le \|x\|\le 2R} 
\mathbb E_x\Big[\exp\big(\lambda\frac{|\eta_{r,2R}|}{r^2}\big)\Big].
\end{equation*}
There exist positive constants $c$, $r_0$ and $\lambda_0$, such that for all $r\ge r_0$, $R\ge r$, and $0\le \lambda\le \lambda_0$, 
\begin{equation}\label{phi.rR}
\varphi_{r,R}(\lambda) \le \exp\big(\frac{c\lambda}{R^2}\big). 
\end{equation}
\end{proposition}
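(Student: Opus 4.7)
The plan is to bound polynomial moments $\mathbb E_x[|\eta_{r,2R}|^k]$ uniformly in $k$ with factorial growth, and then sum the resulting Taylor series. Since $|\eta_{r,2R}|\le|\eta_r|$, it suffices to control the moments of $|\eta_r|$. The key intermediate estimate is that for every integer $k\ge 1$, uniformly in $r\le R$ and $x$ with $R\le\|x\|\le 2R$,
\begin{equation}\label{pl.mom.eta}
\mathbb E_x\bigl[|\eta_r|^k\bigr]\le C^k\,k!\,r^{2(k-1)}\,\Bigl(\frac{r}{\|x\|}\Bigr)^{d-2},
\end{equation}
for a constant $C$ independent of $k,r,R,x$. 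Substituting \eqref{pl.mom.eta} into the Taylor expansion
\[
\mathbb E_x\bigl[e^{\lambda|\eta_{r,2R}|/r^2}\bigr]=\sum_{k\ge 0}\frac{\lambda^k}{k!\,r^{2k}}\,\mathbb E_x\bigl[|\eta_{r,2R}|^k\bigr],
\]
and using that $r^{d-4}/R^{d-2}\le 1/R^2$ holds for $r\le R$ and $d\ge 4$, one obtains
\[
\varphi_{r,R}(\lambda)\le 1+\frac{1}{R^2}\sum_{k\ge 1}(C\lambda)^k\le\exp\!\Bigl(\frac{2C\lambda}{R^2}\Bigr)
\]
for every $\lambda\le \lambda_0:=1/(2C)$, which is exactly the desired conclusion.

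The main task is then to establish \eqref{pl.mom.eta} by induction on $k$. The case $k=1$ is the hitting probability estimate $\mathbb E_x[|\eta_r|]=\mathbf P_x(H_r<\infty)\lesssim (r/\|x\|)^{d-2}$, obtained via Green's function bounds \eqref{Green}--\eqref{Green2}. The case $k=2$ is already contained in the proof of Lemma~\ref{lem.hitting}, which yields $\mathbb E_x[|\eta_r|^2]\lesssim r^d/\|x\|^{d-2}$ in $d\ge 5$. For the inductive step, decompose $k$-tuples of vertices in $\eta_r$ according to the shape of their common-ancestor tree: summing first over the latest common ancestor $w$ and conditioning on $S_w=z\notin\partial B_r$ and on the partition of the $k$ vertices among the children of $w$, the corresponding sub-BRWs are conditionally independent. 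This produces a moment recursion whose dominant contribution takes the form
\[
\sigma^2\,\sum_{j=1}^{k-1}\binom{k}{j}\sum_{z\notin\partial B_r}G(z-x)\,\mathbf E_z\bigl[|\eta_r|^{j}\bigr]\,\mathbf E_z\bigl[|\eta_r|^{k-j}\bigr].
\]
Plugging in the inductive bound and using $G(z-x)\lesssim\|z-x\|^{2-d}$, the summability of the dyadic-scale sum $\sum_{z\notin B_r}G(z-x)\|z\|^{-\alpha}$ (for appropriate $\alpha$) closes the induction with a uniform constant $C$.

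The hard part is controlling the combinatorics in the inductive step: the factorial $k!$ in \eqref{pl.mom.eta} must emerge from the linear orderings of $k$ leaves in the common-ancestor tree, and the constant $C$ has to absorb $\sigma^2$ together with all the Green's function integrals uniformly over possible pairings $(j,k-j)$. The dimension hypothesis $d\ge 5$ enters essentially at the level of the ancestor-position sum: one needs $\sum_{j\ge 0}(2^j r)^{4-d}$ to be geometrically bounded in $j$, which holds precisely for $d\ge 5$. In the critical dimension $d=4$ the same computation produces an additional $\log(R/r)$ factor, which is exactly why Theorem~\ref{theo.outside4} must work with the truncated quantity $\eta_{r,R}$ and introduces the $\log(R/r)$ rescaling in the exponential moments.
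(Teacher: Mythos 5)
Your reduction of the exponential moment to the moment bound
\begin{equation*}
\mathbb E_x\bigl[|\eta_r|^k\bigr]\le C^k\,k!\,r^{2(k-1)}\,\bigl(r/\|x\|\bigr)^{d-2}
\end{equation*}
is indeed formally sufficient (the Taylor-series summation at the top of your write-up is correct, and the $k=1,2,3$ cases match what the paper computes in Lemma~\ref{lem.hitting} and Lemma~\ref{lem.mom.etar}). The gap is in the inductive step itself: the recursion you propose does not close with a constant $C$ that is uniform in $k$. Plug the inductive hypothesis into the binary-split term and use the Green's function sum exactly as you suggest. You get, up to dimension-dependent constants,
\begin{equation*}
\sigma^2\sum_{j=1}^{k-1}\binom{k}{j}\,C^{j}j!\,r^{2(j-1)}\cdot C^{k-j}(k-j)!\,r^{2(k-j-1)}\cdot r^{2(d-2)}\sum_{z\notin B_r}\frac{G(z-x)}{\|z\|^{2(d-2)}}
\;\lesssim\;\sigma^2\,(k-1)\,C^k\,k!\,r^{2(k-1)}\bigl(r/\|x\|\bigr)^{d-2},
\end{equation*}
because $\binom{k}{j}j!(k-j)!=k!$ for every $j$, so the sum over $j$ contributes a factor $(k-1)$ that no choice of $C$ or $\lambda_0$ can absorb. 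The spatial Green's-function sum $r^{2(d-2)}\sum_{z\notin B_r}G(z-x)\|z\|^{-2(d-2)}\lesssim r^{d}\|x\|^{2-d}$ (the same computation as in Lemma~\ref{lem.hitting}) is $\Theta(1)$ in the relevant normalization and does not produce the $1/k$ saving that would be needed. Accounting for higher branching multiplicities at the latest common ancestor only makes the combinatorics worse: the number of labeled rooted trees with $k$ leaves grows like $(2k-3)!!\sim 4^k k!/\mathrm{poly}(k)$, not like $k!$, which is precisely the obstruction you flag as ``the hard part'' but do not resolve. So the key intermediate estimate \eqref{pl.mom.eta}, while almost certainly true, is not proved by this induction, and the proposal does not yet constitute a proof of the proposition.

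For comparison, the paper's proof sidesteps the moment combinatorics entirely. Lemma~\ref{lem.phir2} controls the exponential moment of $|\eta_{r,2r}|/r^2$ directly via the Green's-function martingale $\widetilde M_n=\sum_{u\in\mathcal Z_n(B_{2r}\setminus\partial B_r)}G(S_u)+\dots$, which is a linear object whose increments have controllable exponential moments conditionally on the size of the frozen tree (and that size is handled by Proposition~\ref{prop.Tr}, itself proved by a generating-function iteration rather than moment bounds). Lemma~\ref{lem.phir3} upgrades this to a joint bound on $|\eta_{r,4r}|$ and $|\eta_{4r,r}|$, and Proposition~\ref{prop.phi.i} is then established by an induction on dyadic shells, tracking the exponential moments of the ``wave'' populations $\zeta_k^j$ rather than polynomial moments. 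Working with the exponential moment as the object of the recursion, rather than with individual moments, is exactly what eliminates the extraneous polynomial factor in $k$ that stalls your approach. If you wanted to salvage a moment-based proof, you would need to pass to a generating-function formulation of the recursion (a Ricatti-type inequality for $\lambda\mapsto\mathbb E_x[e^{\lambda|\eta_r|/r^2}]$), which is morally what Lemma~\ref{lem.BGW} and Proposition~\ref{prop.Tr} do.
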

We postpone the proof of this proposition to the end of this section, and continue the proof of \eqref{outside1}. 
Note that the proposition implies with our previous notation, assuming $r\ge r_0$ and $\lambda\le \lambda_0$,
\begin{equation*}
 \mathbb E_x\Big[\exp\Big(\lambda\frac{Z_{r,0}}{r^2}\Big)\Big] \le  \exp\big(\frac{c\lambda}{\|x\|^2}\big).
\end{equation*}
Furthermore, by combining Proposition~\ref{prop.phi.i} with Theorem~\ref{theo.inside}, we deduce that for all $i\ge 1$, 
almost surely 
\begin{equation*}
 \mathbb E_x\Big[\exp\Big(\lambda\frac{Z_{r,i}}{r^2} 
- c\lambda\frac{ |\eta_{R_{i-1}}|}{R_{i-1}^2}\Big)\ \big|\ \mathcal G_{i-1}\Big] \le 1.
\end{equation*}
It follows by induction that for any $I\ge 1$, and $\lambda\le \lambda_0$, 
$$
\mathbb E_x\Big[\exp\Big(\lambda\sum_{i=1}^I \frac{ Z_{r,i}}{r^2}
-c\lambda \sum_{i=0}^{I-1}\frac{ |\eta_{R_i}|}{R_i^2}\Big)
 \ \big|\ \mathcal G_0
\Big] \le 1.$$
Using next Cauchy-Schwarz inequality, we get that for any $I\ge 1$, and $\lambda\le \lambda_0/2$, 
$$\mathbb E_x\Big[\exp\big(\lambda \sum_{i=0}^I 
\frac{ Z_{r,i}}{r^2} \big)\Big] 
\le  \exp\big(\frac{c\lambda}{\|x\|^2}\big) \cdot 
 \mathbb E_x\Big[\exp\big(2c\lambda \sum_{i=0}^{I-1}
\frac{ |\eta_{R_i}|}{R_i^2}\big)\Big]^{1/2}. $$ 
Now in order to compute the exponential moment in the right-hand side, 
we use Theorem \ref{theo.inside}. Indeed, note that for any $i\ge 1$, 
\begin{equation*}
\eta_{R_{i+1}} = \bigcup_{u\in \eta_{R_i}} 
\eta^u_{R_{i+1}}.
\end{equation*} 
Since as we condition on $ \eta_{R_i}$ the subtrees emanating from the vertices in 
$\eta_{R_i}$ are independent,
\eqref{inside3} shows that for $\lambda$ small enough, for any $i\ge 0$, 
\begin{equation*}
\mathbb E_x\Big[\exp\big(\lambda \frac{ |\eta_{R_{i+1}}|}{R_{i+1}^2}\big)
\ \Big|\ \eta_{R_i}\Big] 
\le \exp\big(2\lambda \frac{ |\eta_{R_i}|}{R_{i+1}^2}\big)
= \exp\big(\lambda \frac{ |\eta_{R_i}|}{2R_i^2}\big). 
\end{equation*} 
Since $R_0\ge \|x\|$, it follows by induction, that 
for any $I\ge 1$, and all $\lambda$ small enough, 
\begin{equation*}
\mathbb E_x\Big[\exp\big(\lambda \sum_{i=0}^{I-1}
\frac{ |\eta_{R_i}|}{R_i^2}\big)\Big]
\le \exp\big(\frac{2\lambda(1+\dots+2^{-I})}{\|x\|^2}\big) 
\le \exp(\frac{4\lambda}{\|x\|^2}).  
\end{equation*}
Together with \eqref{link.etar.etari}, this concludes the proof of \eqref{outside1}.

Then the proofs of \eqref{outside2} and \eqref{outside3} follow exactly as for the corresponding estimates, respectively \eqref{inside1} and \eqref{inside3}, from Theorem \ref{theo.inside}. For \eqref{outside3}, it suffices to use in addition the first point of Lemma~\ref{lem.mom.etar}. 
\hfill $\square$

\paragraph{Proof of Proposition~\ref{prop.phi.i}.}
For this we need two preliminary results, 
Lemmas~\ref{lem.phir2} and~\ref{lem.phir3} below. 
We note that the first one holds in fact in any dimension $d\ge 3$, 
and will be used also for the case $d=4$, in the next section. Recall two handy
notation: if $r<R$, we write
\[
\eta_{r,R}=\eta_r\cap \mathcal T(B_R),\quad\text{and}\quad
\eta_{R,r}=\eta_R\cap \mathcal T((\partial B_r)^c).
\]
In other words, $\eta_{R,r}$ is the set of particles which freeze on $\partial B_R$ before reaching $\partial B_r$. 
\begin{lemma}\label{lem.phir2} Assume $d\ge 3$.  
Define for $r\ge 1$, and $\lambda>0$, 
$$
\varphi_r(\lambda):=  \sup_{r\le \|x\|\le 2r} 
\mathbb E_x \Big[\exp\big(\lambda\frac{|\eta_{r,2r}|}{r^2}\big)\Big].
$$
There exist positive constants $c$ and $\lambda_0$ 
(only depending on the dimension), 
such that for any $r\ge 1$, and $0\le \lambda\le \lambda_0$,  
$$\varphi_r(\lambda) \le \exp\big(\frac{c\lambda}{r^2}\big). $$ 
\end{lemma}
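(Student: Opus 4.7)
My plan is to adapt the martingale argument from the proof of~\eqref{inside2} to the annular geometry, replacing the ball $B_r$ by the ``alive'' region $A := \{z \in \mathbb Z^d : r \leq \|z\| < 2r\} \setminus \partial B_r$. The trivial boundary cases $x \in \partial B_r$ (giving $|\eta_{r,2r}|=1$ since no descendant of the root can then lie in $\eta_r$) and $x \in \partial B_{2r}$ (giving $|\eta_{r,2r}|=0$ since $\emptyset \notin \mathcal T(B_{2r})$) are handled separately, so the interesting case is $x$ in the interior of $A$. The key structural observation---which hinges on the nearest-neighbor assumption on $\theta$---is that any child of a vertex $u \in \mathcal Z_n(A)$ must land in exactly one of three disjoint places: still in $A$, frozen on $\partial B_r$ (joining $\eta_{r,2r}$), or frozen on $\partial B_{2r}$ (joining $\eta_{2r,r}$). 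Thus $A$ behaves structurally like $B_r$ did in~\eqref{inside2}, with just one extra exit boundary.

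In direct analogy with~\eqref{def-Mar1}, I would introduce
\begin{equation*}
M_n := \sum_{u \in \mathcal Z_n(A)} \|S_u\|^2 + \sum_{u \in \eta_{r,2r}^n \cup \eta_{2r,r}^n} \|S_u\|^2 - |\mathcal T_n(A)| - |\eta_{r,2r}^n| - |\eta_{2r,r}^n|,
\end{equation*}
(with $\eta_{r,2r}^n, \eta_{2r,r}^n$ the restrictions to $|u|\le n$), and verify exactly as in Lemma~\ref{lem.M} that $(M_n)$ is a martingale converging almost surely to a limit $M_\infty$. Using $\|S_u\|^2 \geq r^2$ on $\partial B_r$ together with the (positive and large) contribution $\|S_u\|^2 \geq 4r^2$ on $\partial B_{2r}$, one extracts the key inequality
\begin{equation*}
(r^2 - 1)|\eta_{r,2r}| \leq M_\infty + |\mathcal T(A)|,
\end{equation*}
and hence $|\eta_{r,2r}|/r^2 \lesssim (M_\infty + |\mathcal T(A)|)/r^4$ for $r\ge 2$.

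The remainder of the proof mirrors~\eqref{inside2}. By Cauchy--Schwarz it suffices to bound $\mathbb E_x[\exp(c\lambda M_\infty/r^4)]$ and $\mathbb E_x[\exp(c\lambda |\mathcal T(A)|/r^4)]$ separately. The latter follows from Proposition~\ref{prop.Tr} applied with $B_{2r}$ in place of $B_r$, since $\mathcal T(A) \subseteq \mathcal T(B_{2r})$. For the former, the martingale increment has the same shape as~\eqref{nablaM},
\begin{equation*}
\nabla M_n = \sum_{u \in \mathcal Z_n(A)} \Big[(\xi_u - 1)\|S_u\|^2 + \sum_{v \in \mathcal N(u)}(\|S_v\|^2 - \|S_u\|^2 - 1)\Big],
\end{equation*}
and combining the finite exponential moment of $\mu$ with the uniform bound $\|S_u\|^2 \leq 4r^2$ on $A$ gives $\mathbb E_x[\exp(\lambda \nabla M_n/r^4) \mid \mathcal F_n] \leq \exp(c\lambda^2 |\mathcal Z_n(A)|/r^4)$ for $\lambda$ small. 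Successive conditioning, the a priori bound $M_0 = \|x\|^2 - 1 \leq 4r^2$, and a second application of Proposition~\ref{prop.Tr} then close the estimate. Since the argument is essentially an annular replay of~\eqref{inside2}, I do not expect a genuinely new obstacle; the only subtle point is to shepherd the $\partial B_{2r}$ boundary terms through without spoiling the inequality on $|\eta_{r,2r}|$---and here one simply exploits that they appear with a positive sign on the side one wants to upper bound.
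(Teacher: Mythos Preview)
Your proof is correct, and the overall architecture---build a martingale out of a harmonic function of the particle positions, extract a pointwise lower bound on the limit in terms of $|\eta_{r,2r}|$, then control the increments via Proposition~\ref{prop.Tr}---matches the paper's. The one substantive difference is the choice of harmonic function: you reuse $\|\cdot\|^2$ from the proof of~\eqref{inside2}, whereas the paper switches to the Green's function $G$ and sets
\[
\widetilde M_n := \sum_{u\in \mathcal Z_n(B_{2r}\setminus \partial B_r)} G(S_u) + \sum_{\substack{u\in \eta_{r,2r}\\ |u|\le n}} G(S_u) + \sum_{\substack{u\in \eta_{2r,r}\\ |u|\le n}} G(S_u).
\]
Because $G$ is genuinely harmonic off the origin (no ``$-1$ per step'' compensator) and nonnegative, the paper gets $\widetilde M_\infty \ge G(r)\,|\eta_{r,2r}|$ directly, without having to add back $|\mathcal T(A)|$ or shepherd the $\partial B_{2r}$ boundary terms; this saves one application of Proposition~\ref{prop.Tr} and avoids the harmless but slightly awkward degeneracy of your inequality at $r=1$. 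On the other hand, your $\|\cdot\|^2$ argument makes no use of transience and so would go through in any dimension $d\ge 1$, while the paper's Green's function route genuinely needs $d\ge 3$---which is exactly what the lemma assumes anyway.
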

\begin{proof}
Consider the process $\{\widetilde M_n\}_{n\ge 0}$ 
defined for $n\ge 0$ by 
\begin{equation*}
\widetilde M_n := \sum_{u\in \mathcal Z_n(B_{2r}\bs \partial B_r)} G(S_u) +  
\sum_{\substack{u\in \eta_{r,2r}\\ |u|\le n}}G(S_u)+
\sum_{\substack{u\in \eta_{2r,r}\\ |u|\le n}} G(S_u).
\end{equation*}
Note that for each $n\ge 0$, one has 
$$\nabla \widetilde M_n:= \widetilde M_{n+1}- \widetilde M_n = 
\sum_{u\in \mathcal Z_n(B_{2r}\bs \partial B_r)} 
\Big\{(\xi_u-1)G(S_u) + \sum_{v\in \mathcal N(u)}(G(S_v) - G(S_u))\Big\},$$ 
where we recall that $\mathcal N(u)$ denotes the set of children of $u$. 
Therefore, since $G$ is harmonic on $\mathbb Z^d\setminus \{0\}$, this process is a martingale with respect to the filtration 
$\{\mathcal F_n\}_{n\ge 0}$, as defined in Lemma~\ref{lem.M}.
Moreover, as $n\to \infty$, it converges almost surely toward $\widetilde M_\infty$ given by 
\begin{equation*}
\widetilde M_\infty = \sum_{u\in \eta_{r,2r}} G(S_u) + \sum_{u\in \eta_{2r,r}} G(S_u).
\end{equation*}
Letting $G(r):= \inf_{x\in \partial B_r} G(x)$, we thus have 
$\widetilde M_\infty \ge G(r) |\eta_{r,2r}|$.  
By Fatou's lemma, this yields 
\begin{equation}\label{phi.r.1}
\varphi_r(\lambda) \le \liminf_{n\to \infty}\sup_{r\le \|x\|\le 2r} 
\mathbb E_x\Big[\exp\big(\frac{\lambda \widetilde M_n}{r^2G(r)}\big)\Big]. 
\end{equation}
Now, as in the proof of~\eqref{inside2} one has for some constant $c>0$, for any $n\ge 0$, and any $\lambda$ small enough, 
$$
\mathbb E_x\Big[\exp\big(\frac{\lambda (\widetilde M_n-\widetilde M_0)}
{r^2G(r)} - \frac{c\lambda^2|\mathcal T_n(B_{2r})|}{r^4}\big)\Big] \le 1,
$$
from which we infer using Cauchy-Schwarz and the fact that under $\mathbb P_x$, 
$\widetilde M_0=G(x)$,  
\begin{equation*}
\mathbb E_x\Big[\exp\big(\frac{\lambda \widetilde M_n}{r^2G(r)}\big)\Big] 
\le \mathbb E_x\Big[\exp\big(\frac{2c\lambda^2
|\mathcal T_n(B_{2r})|}{r^4}\big)\Big]^{1/2}
\cdot \exp\big(\frac{\lambda G(x)}{r^2G(r)}\big).
\end{equation*}
Thus the lemma follows from Proposition~\ref{prop.Tr} and~\eqref{Green}, together with~\eqref{phi.r.1}. 
\end{proof}
We can now state the following result, 
which will be our main building block in the proof of Proposition~\ref{prop.phi.i}. 

\begin{lemma}\label{lem.phir3}
Assume $d\ge 5$. 
There exist positive constants $r_0\ge 1$ and $\lambda_0$, such that for any $r\ge r_0$, and $0\le \lambda\le \lambda_0$,  
$$\sup_{x\in \partial B_{2r}} \, \mathbb E_x \Big[\exp\big(\frac{\lambda |\eta_{r,4r}|}{r^2} + \frac{\lambda |\eta_{4r,r}|}{16r^2}\big)\Big] 
 \le \exp\Big(\frac{\lambda}{4r^2}\Big). $$ 
\end{lemma}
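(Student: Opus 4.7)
The plan is to mimic the martingale construction from the proofs of \eqref{inside2} and Lemma \ref{lem.phir2}, but with a harmonic function tailored so that the boundary values match the weights $1/r^2$ and $1/(16 r^2)$ appearing in the statement, and then combine a sharp first-moment bound with the quadratic-variation bootstrap to pass to the exponential moment. Let $h_r:\mathbb Z^d\to\mathbb R_+$ denote a bounded extension of the discrete Dirichlet solution in the annulus $B_{4r}\setminus \overline{B_r}$ with boundary values $h_r\equiv 1/r^2$ on $\partial B_r$ and $h_r\equiv 1/(16 r^2)$ on $\partial B_{4r}$, and set
$$\widetilde M_n := \sum_{u\in \mathcal Z_n(B_{4r}\setminus \partial B_r)} h_r(S_u) + \sum_{\substack{u\in\eta_{r,4r}\\ |u|\le n}} h_r(S_u) + \sum_{\substack{u\in\eta_{4r,r}\\ |u|\le n}} h_r(S_u).$$
Exactly as in Lemma \ref{lem.M}, harmonicity of $h_r$ together with $\mathbb E[\xi_u-1]=0$ implies that $\{\widetilde M_n\}$ is an $\mathcal F_n$-martingale converging almost surely to $\widetilde M_\infty=|\eta_{r,4r}|/r^2+|\eta_{4r,r}|/(16 r^2)$, with $\mathbb E_x[\widetilde M_\infty]=h_r(x)$.

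The crucial first-moment step is to verify that $h_r(x)\le (1-\delta)/(4 r^2)$ for some $\delta=\delta(d)>0$, uniformly in $x\in\partial B_{2r}$ and $r\ge r_0$. By the probabilistic interpretation,
$$h_r(x)=\frac{1+15\,\mathbf P_x(H_r<H_{4r})}{16 r^2},$$
so it suffices to prove $\mathbf P_x(H_r<H_{4r})\le 1/5-\delta'$ uniformly. Applying optional stopping to the Green's function at the stopping time $H_r\wedge H_{4r}$ and using \eqref{Green} gives, for $x\in\partial B_{2r}$,
$$\mathbf P_x(H_r<H_{4r})=\frac{2^{-(d-2)}-4^{-(d-2)}}{1-4^{-(d-2)}}+\mathcal O(1/r^2),$$
which is strictly less than $1/5$ with a uniform positive gap in dimensions $d\ge 5$; in the borderline dimension $d=4$ the leading term equals $1/5$ and the slack must be produced by inspecting the sign of the lower-order discrete correction, or by slightly perturbing $h_r$ to a super-harmonic majorant adapted to the lattice, which is why $r_0$ must be taken sufficiently large.

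For the exponential moment, proceed as in the proof of \eqref{inside2} and Lemma \ref{lem.phir2}: $h_r$ is bounded by $1/r^2$ on the annulus and satisfies the gradient estimate $|h_r(y+e)-h_r(y)|\lesssim 1/r^3$ for nearest-neighbor steps, so using the finite exponential moment of $\xi_u$
$$\mathbb E_x\Bigl[\exp\bigl(\lambda\,\nabla\widetilde M_n\bigr)\,\Big|\,\mathcal F_n\Bigr]\le \exp\!\Bigl(c\,\lambda^2\,|\mathcal Z_n(B_{4r})|/r^4\Bigr),\quad 0\le \lambda\le \lambda_0,$$
and iterating over $n$ followed by Cauchy--Schwarz together with Proposition \ref{prop.Tr} gives
$$\mathbb E_x\bigl[\exp(\lambda\,\widetilde M_\infty)\bigr]\le \exp\!\Bigl(\lambda\,h_r(x)+C\lambda^2/r^2\Bigr)\le \exp\!\Bigl(\frac{\lambda(1-\delta)}{4 r^2}+\frac{C\lambda^2}{r^2}\Bigr).$$
Choosing $\lambda_0\le \delta/(4 C)$ makes the right-hand side $\le \exp(\lambda/(4 r^2))$, which is the claimed bound.

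The main obstacle is the sharp first-moment step: in dimensions $d\ge 5$, the leading-order Green's-function computation automatically yields a uniform gap $\mathbf P_x(H_r<H_{4r})<1/5$, but the critical dimension $d=4$ is delicate, since the leading term saturates the target $1/5$. The required slack must be extracted either from the explicit lower-order correction in the discrete Green's function or from a small perturbation of the Dirichlet solution $h_r$ into a super-harmonic majorant with the same boundary values, an analysis that forces the choice of a large-enough threshold $r_0$. All remaining steps are a direct application of the martingale exponential-moment machinery already developed in earlier sections.
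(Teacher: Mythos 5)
Your route is genuinely different from the paper's. The paper does not introduce a second harmonic function: it bounds each of the two exponential factors separately by a direct Taylor expansion $\mathbb E[e^{\lambda X}]\le 1+\lambda\mathbb E[X]+\lambda^2\mathbb E[X^2 e^{\lambda X}]$, controlling the first moment via the Green's-function identity $\mathbb E_x[|\eta_{r,4r}|]=\mathbf P_x(H_r<H_{4r})$ and the second-moment correction via Proposition~\ref{prop.hit.upper} and (the proof of) Lemma~\ref{lem.phir2}, and then recombines the two factors with Cauchy--Schwarz. Your plan replaces the Cauchy--Schwarz split by a single martingale $\widetilde M_n$ built from the harmonic function $h_r$ with boundary values $1/r^2$ and $1/(16r^2)$, so that $\widetilde M_\infty$ is exactly the quantity in the exponent. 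This is cleaner conceptually, and the quadratic-variation bootstrap plus Proposition~\ref{prop.Tr} that you invoke is precisely the machinery of Lemma~\ref{lem.phir2}, so the structural part of your argument is sound. Reassuringly, when you unwind both accountings, the crucial first-moment requirement coincides: your condition $h_r(x)r^2\le (1-\delta)/4$ is equivalent to $\mathbf P_x(H_r<H_{4r})\le 1/5-\delta'$, and the paper's Cauchy--Schwarz version reduces to exactly the same inequality via $p+(1-p)/16<1/4$.

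The real gap is in the dimension bookkeeping. The continuum hitting probability $\frac{2^{2-d}-4^{2-d}}{1-4^{2-d}}$ equals $1/9,1/17,\dots$ for $d=5,6,\dots$, so in $d\ge 5$ you do have a uniform gap below $1/5$, and your proof (like the paper's) works. But in $d=4$ the value is exactly $1/5$, and in $d=3$ it is $1/3$, which is strictly \emph{above} $1/5$. For $d=3$ there is no possible rescue: the required first-moment bound is simply false, and your proposal does not even mention this case although the statement allows it. For $d=4$ your suggested remedies — reading off the sign of the lattice correction to $G$, or replacing $h_r$ by a super-harmonic majorant — will not produce a uniform $\delta$: the $\mathcal O(\|z\|^{-d})$ error in \eqref{Green} has no sign guarantee, and in any case taking $r_0$ large only pushes $\mathbf P_x(H_r<H_{4r})$ \emph{closer} to the borderline value $1/5$, not away from it. The paper's way out in $d=4$ is a different lemma (Lemma~\ref{lem.phir4}) that keeps the borderline constant $1/5(1+c/r+c\lambda)$ explicitly and lets the slack accumulate multiplicatively across scales in Proposition~\ref{prop.phi.d4}, producing the $\log(R/r)$ normalization — that is not something a local improvement of $h_r$ can reproduce. (In the paper Lemma~\ref{lem.phir3} is only ever applied inside Proposition~\ref{prop.phi.i}, which assumes $d\ge 5$, so the "$d\ge 3$" in the statement is broader than what the proof — theirs or yours — actually delivers; you should either restrict your claim to $d\ge 5$ or flag this explicitly.)

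One smaller point: you assert the gradient bound $|h_r(y+e)-h_r(y)|\lesssim r^{-3}$, which requires a discrete interior gradient estimate that degrades near $\partial B_r$ and $\partial B_{4r}$. You do not actually need it — the crude bound $|h_r(y+e)-h_r(y)|\le 2\sup h_r\lesssim r^{-2}$ already gives a per-particle quadratic-variation contribution of order $r^{-4}$, which is exactly what feeds into Proposition~\ref{prop.Tr} — so it is safer to drop the $r^{-3}$ claim.
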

\begin{proof}
We note that for any $x\in \partial B_{2r}$, 
$$\mathbb E_x\Big[\exp\big(\frac{\lambda |\eta_{r,4r}|}{r^2}\big)\Big] \le 1+ \frac{\lambda \mathbb E_x\big[|\eta_{r,4r}|\big]}{r^2} + 
\frac{\lambda^2}{r^4} \mathbb E_x\Big[|\eta_{r,4r}|^2\exp\big(\frac{\lambda |\eta_{r,4r}|}{r^2}\big)\Big]. $$ 
Now, similarly as in Lemma~\ref{lem.mom.etar}, one has 
$$\mathbb E_x\big[|\eta_{r,4r}|\big] = \mathbf P_x(H_r<H_{4r}) \le  \frac{G(x) - G(4r)}{G(r)- G(4r)},$$
with $G(s)= \inf_{z\in \partial B_s} G(z)$, for $s\ge 1$. Therefore, using \eqref{Green}, we deduce that for $r$ large enough, 
$$\sup_{x\in \partial B_{2r}} \mathbb E_x\big[|\eta_{r,4r}|\big] \le \frac{1}{8} .$$ 
Using next Proposition~\ref{prop.hit.upper}, and (the proof of) Lemma~\ref{lem.phir2} we deduce, as for the proof of ~\eqref{inside3}, that for $\lambda$ small enough, 
$$\sup_{x\in \partial B_{2r}} \mathbb E_x\Big[|\eta_{r,4r}|^2\exp\big(\frac{\lambda |\eta_{r,4r}|}{r^2}\big)\Big] \le c r^2,$$
for some constant $c>0$. It follows that for $\lambda$ small enough, and $r$ large enough, 
 $$\sup_{x\in \partial B_{2r}} \mathbb E_x\Big[\exp\big(\frac{\lambda |\eta_{r,4r}|}{r^2}\big)\Big] \le \exp\big(\frac{\lambda}{6r^2}\big). $$ 
 The same argument leads to 
 $$\sup_{x\in \partial B_{2r}} \mathbb E_x\Big[\exp\big(\frac{\lambda |\eta_{4r,r}|}{16r^2}\big)\Big] \le \exp\big(\frac{\lambda}{16r^2}\big),$$
 and the lemma follows by using Cauchy-Schwarz inequality, since $\tfrac{1}{6} + \tfrac{1}{16} \le \tfrac 14$.  
\end{proof}

\begin{proof}[Proof of Proposition~\ref{prop.phi.i}] 
Assume $r\ge r_0$, with $r_0$ given by Lemma~\ref{lem.phir3}, and for $i\ge 0$, set $R_i = r2^i$. Let also 
$$
\phi_{r,i}(\lambda) :=  \sup_{x\in \partial B_{R_i}} 
\mathbb E_x\Big[\exp\big(\frac{\lambda|\eta_{r,R_{i+1}}|}{r^2} 
+ \frac{\lambda|\eta_{R_{i+1},r}|}{R_{i+1}^2}\big)\Big].  
$$
We will prove by induction that, 
for all $r\ge r_0$, and all $0\le \lambda\le \lambda_0$, with $\lambda_0$ as in Lemma~\ref{lem.phir3}, one has for all $i\ge 0$, 
\begin{equation}\label{goal.phiri}
\varphi_{r,i}(\lambda)\le \exp\big(\frac{\lambda}{R_i^2}\big). 
\end{equation} 
We claim that this implies the proposition. 
Indeed, let $R\ge r\ge r_0$ be given and assume that $R_i\le R< R_{i+1}$, for some $i\ge 0$. Let also $x$ be such that $R\le \|x\|\le 2R$. 
If $R\le \|x\|\le R_{i+1}$, then under $\mathbb P_x$, 
$$
|\eta_{r,2R}|\le \sum_{u\in \eta_{R_i,R_{i+1}}} |\eta^u_{r,R_{i+1}}| 
+ \sum_{u\in \eta_{R_{i+1},r}}   |\eta^u_{r,R_{i+2}}|,
$$
showing that the desired result follows indeed from~\eqref{goal.phiri}, 
Lemma~\ref{lem.phir2}, and~\eqref{inside2}.  
On the other hand, if $R_{i+1}\le \|x\|\le 2R$, then under $\mathbb P_x$, 
$$
|\eta_{r,2R}|\le \sum_{u\in \eta_{R_{i+1},R_{i+2}} } 
|\eta^u_{r,R_{i+2}}| + \sum_{u\in \eta_{R_{i+2},r} }
|\eta^u_{r,R_{i+3}}|,
$$
from which the result follows as well. 

Thus it only amounts to prove~\eqref{goal.phiri}, 
which we now show by induction on $i\ge 0$. 

Note that when $i=0$, the result is immediate by definition, and when $i=1$, the result is given by Lemma~\ref{lem.phir3}.

Assume next that it holds up to some integer $i\ge 1$,  
and let us prove it for $i+1$. 
For $x\in \partial B_{R_{i+1}}$, we define inductively four sequences 
$\{\zeta_k^j\}_{k\ge 0}$, for $j\in \{0,1,2,3\}$,  
of vertices of $\mathcal T$ as follows. 
Let $\zeta_0^2: = \{\emptyset\}$ be the root of $\mathcal T$. 
Next, we can first define for any $k\ge 0$, 
$$
\zeta_k^1 = \bigcup_{u\in \zeta_k^2} 
\eta^u_{R_i, R_{i+2}}, 
\quad\text{and}\quad \zeta_{k+1}^2 = \bigcup_{u\in \zeta_k^1} 
\eta^u_{R_{i+1},r}.
$$
Then we let 
$$
\zeta_k^0 := \bigcup_{u \in\zeta_k^1} \eta^u_{r,R_{i+1}},
\quad\text{and}\quad \zeta_k^3 = \bigcup_{u\in \zeta_k^2} 
\eta^u_{R_{i+2},R_i}. 
$$ 
In particular under $\mathbb P_x$, with $x\in \partial B_{R_{i+1}}$, one has for any $k\ge 0$, 
\begin{equation}\label{positions}
S_u \in \partial B_{R_{i+j-1}}, \text{ if }u\in \zeta_k^j, \text{ for }  j\in\{1,2,3\}, \quad \text{and}\quad S_u \in \partial B_r, \text{ if }u\in \zeta_k^0. 
\end{equation}
See Figure~\ref{fig:dessin-d5} below where 
an illustration of $\{\zeta_k^0\}$ is drawn. Moreover,  
$$
\eta_{r,R_{i+2}} = \bigcup_{k=0}^\infty \zeta_k^0, \quad\text{and}\quad 
\eta_{R_{i+2},r} = \bigcup_{k=0}^\infty \zeta_k^3.$$
Indeed, concerning the first equality, note that 
any particle reaching $\partial B_r$, before hitting $\partial B_{R_{i+2}}$, will make a number of excursions between 
$\partial B_{R_i}$ and $\partial B_{R_{i+1}}$ back and forth, before at some point reaching $\partial B_{R_i}$, 
and then $\partial B_r$ without hitting $\partial B_{R_{i+1}}$, 
and a similar argument leads to the second equality. 

\begin{figure}[htpb]
\centering
\includegraphics[width=8cm,height=10cm]{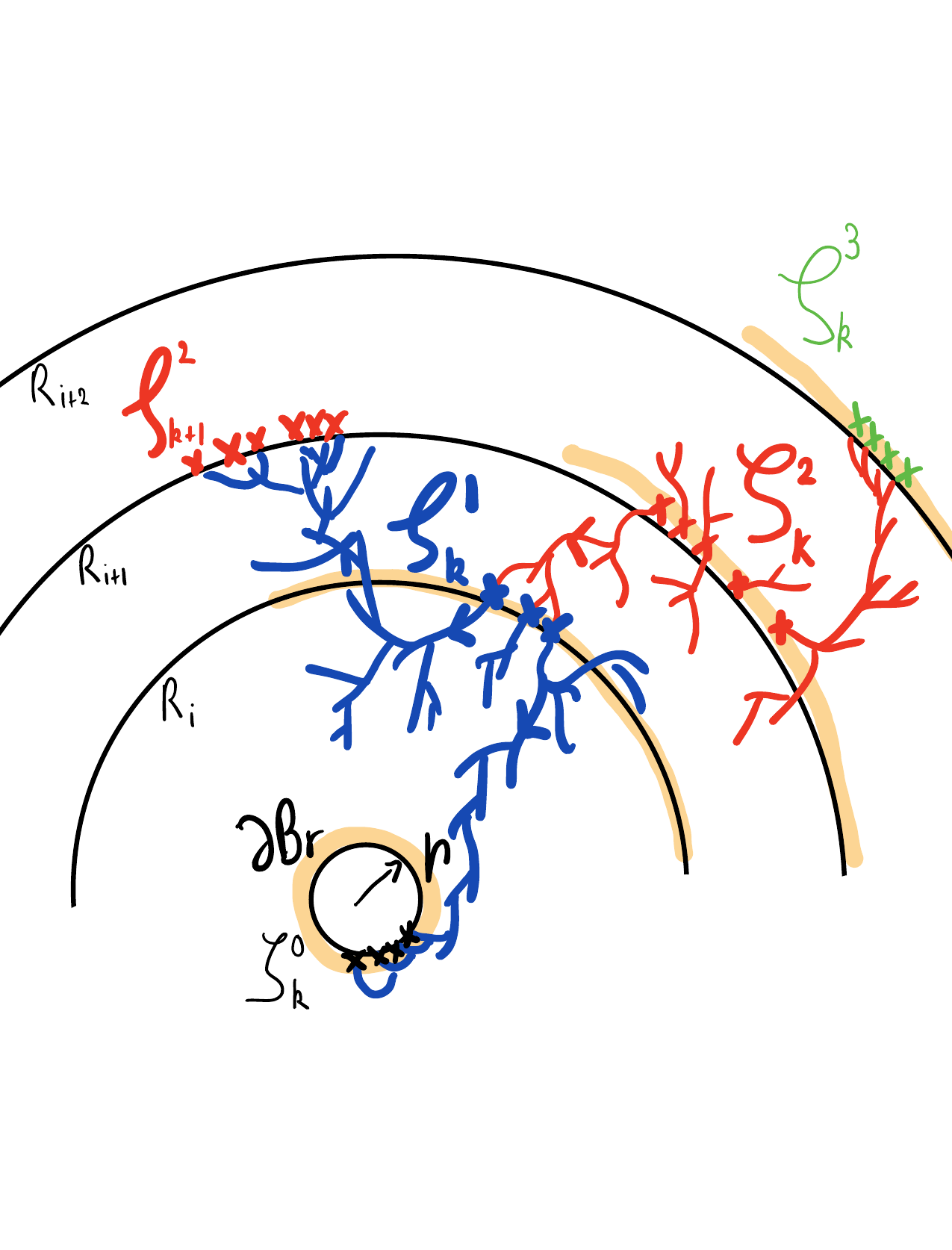}
\caption{Waves}\label{fig:dessin-d5}
\end{figure}

By monotone convergence, we deduce that  
\begin{equation}\label{eta.conv.mon}
\varphi_{r,i+1}(\lambda) \le \lim_{n \to \infty} \sup_{x\in \partial B_{R_{i+1}}}
\mathbb E_x\Big[\exp\big(\frac{\lambda\sum_{k=0}^n |\zeta_k^0|}{r^2}+\frac{\lambda\sum_{k=0}^n |\zeta^3_k|}{R_{i+2}^2}\big)\Big]. 
\end{equation} 
For $k\ge 0$, we let $\mathcal G_k$ be the sigma-field generated 
by the tree $\mathcal T$ cut at vertices in 
$\zeta_k^1\cup \zeta_k^3$, together with the positions of the BRW at the vertices on this subtree. 
We also let $\mathcal H_k$ be the sigma-field generated 
by the tree cut at vertices in  $\zeta_k^2$, together with the positions of the BRW on the corresponding subtree.

The induction hypothesis implies that almost surely, one has 
$$\mathbb E_x\Big[\exp\big(\frac{\lambda |\zeta_n^0|}{r^2}\big) \ \big|\ \mathcal G_n \Big] \le \exp\big(\frac{\lambda|\zeta_n^1|}{R_i^2}\big). $$ 
Then Lemma~\ref{lem.phir3} ensures,  
that for $0\le \lambda \le \lambda_0$, and $r\ge r_0$, almost surely (recall~\eqref{positions}), 
$$\mathbb E_x\Big[\exp\big(\frac{\lambda |\zeta_n^1|}{R_i^2}+ \frac{\lambda |\zeta^3_n|}{R_{i+2}^2}\big) \ \big|\ \mathcal H_n \Big] \le 
\exp\big(\frac{\lambda |\zeta_n^2|}{R_{i+1}^2}\big). $$  
Applying again the induction hypothesis, we get that almost surely, 
$$\mathbb E_x\Big[\exp\big(\frac{\lambda |\zeta_{n-1}^0|}{r^2}+ \frac{\lambda |\zeta^2_n|}{R_{i+1}^2}\big) \ \big|\ \mathcal G_{n-1} \Big] 
\le \exp \big(\frac{\lambda |\zeta_{n-1}^1|}{R_i^2}\big). $$  
Then an elementary induction shows that for all $n\ge 1$, 
$$\mathbb E_x\Big[\exp\big(\frac{\lambda\sum_{k=0}^n |\zeta_k^0|}{r^2}+\frac{\lambda\sum_{k=0}^n |\zeta^3_k|}{R_{i+2}^2}\big)\Big] 
\le \exp\big(\frac{\lambda}{R_{i+1}^2}\big),$$
proving \eqref{goal.phiri} for $i+1$, 
which concludes the proof of Proposition~\ref{prop.phi.i}.   
\end{proof}


\section{Proof of Theorem~\ref{theo.outside4}}\label{sec-outside4}
The proof is similar to the proof of Theorem~\ref{theo.outside5}, 
but one has to be slightly more careful. 
The main difference comes from the following modified version of Proposition~\ref{prop.phi.i}. 

\begin{proposition}\label{prop.phi.d4}
Assume $d=4$. There exist positive constants $c$, $r_0$ and $\lambda_0$, such that for any $r\ge r_0$, $R\ge r$, and $0\le \lambda \le \lambda_0$, 
$$
 \sup_{R\le \|z\|\le 2R} \mathbb E_z \Big[\exp\big(\lambda\frac{|\eta_{r,2R}|}{r^2\log(R/r)}\big)\Big] \le \exp\big(\frac{c\lambda}{R^2\log(R/r)}\big). $$ 
\end{proposition}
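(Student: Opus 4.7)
Proof plan for Proposition~\ref{prop.phi.d4}.

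The plan is to mimic the proof of Proposition~\ref{prop.phi.i}, replacing each denominator $r^2$ or $R_i^2$ by $r^2 L_i$ or $R_i^2 L_i$, where $L_i:=1+\log(R_i/r)$ carries the logarithmic correction specific to dimension four. As at the start of the proof of Proposition~\ref{prop.phi.i}, I would first reduce to $r\ge r_0$ for some fixed $r_0$: when $r\le r_0$ and $R\le r_0$, the crude bound $|\eta_{r,2R}|\le |\mathcal T(B_{r_0})|$ combined with Proposition~\ref{prop.Tr} suffices; when $r\le r_0\le R$, the bound $|\eta_{r,2R}|\le |\eta_{r_0,2R}|$ reduces to the statement already established at $r_0$.

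For $i\ge 1$, set $R_i:=r2^i$, $L_i:=1+\log(R_i/r)$, and
\[
\phi_{r,i}(\lambda):=\sup_{x\in\partial B_{R_i}}\mathbb E_x\Big[\exp\Big(\frac{\lambda|\eta_{r,R_{i+1}}|}{r^2 L_i}+\frac{\lambda|\eta_{R_{i+1},r}|}{R_{i+1}^2 L_i}\Big)\Big].
\]
I would prove by induction on $i\ge 1$ that $\phi_{r,i}(\lambda)\le \exp(\lambda/(R_i^2 L_i))$ for all $\lambda\in[0,\lambda_0]$, uniformly in $r\ge r_0$. The base case $i=1$ reduces to Lemma~\ref{lem.phir3} because $L_1=1+\log 2$ is a universal constant (shrinking $\lambda_0$ by a factor $L_1$ if needed). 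For the induction step, set $\widetilde\lambda:=\lambda L_i/L_{i+1}\le \lambda$, so that $\lambda/L_{i+1}=\widetilde\lambda/L_i$, and reproduce the four-shell wave decomposition $\{\zeta_k^j\}_{k\ge 0}$, $j\in\{0,1,2,3\}$, of the proof of Proposition~\ref{prop.phi.i} starting from $x\in\partial B_{R_{i+1}}$. The induction hypothesis $\phi_{r,i}(\widetilde\lambda)$ applied to the subtree rooted at each vertex of $\zeta_n^1\subset \partial B_{R_i}$ gives
\[
\mathbb E\Big[\exp\Big(\frac{\widetilde\lambda|\zeta_n^0|}{r^2 L_i}+\frac{\widetilde\lambda|\zeta_{n+1}^2|}{R_{i+1}^2 L_i}\Big)\,\Big|\,\mathcal G_n\Big]\le \exp\Big(\frac{\widetilde\lambda|\zeta_n^1|}{R_i^2 L_i}\Big),
\]
while Lemma~\ref{lem.phir3} applied at scale $R_i$, with parameter $\widetilde\lambda/L_i\in[0,\lambda_0]$ (admissible since $L_i\ge 1$), to the subtree rooted at each vertex of $\zeta_n^2\subset\partial B_{R_{i+1}}$ yields, using $R_{i+1}^2=4R_i^2$,
\[
\mathbb E\Big[\exp\Big(\frac{\widetilde\lambda|\zeta_n^1|}{R_i^2 L_i}+\frac{\widetilde\lambda|\zeta_n^3|}{R_{i+2}^2 L_i}\Big)\,\Big|\,\mathcal H_n\Big]\le \exp\Big(\frac{\widetilde\lambda|\zeta_n^2|}{R_{i+1}^2 L_i}\Big).
\]
Chaining these two estimates by the tower property (using $\mathcal H_n\subset\mathcal G_n$), iterating in $n$, and passing to the limit by monotone convergence (the tree being almost surely finite, $|\zeta_n^2|\to 0$) gives
\[
\mathbb E_x\Big[\exp\Big(\frac{\widetilde\lambda|\eta_{r,R_{i+2}}|}{r^2 L_i}+\frac{\widetilde\lambda|\eta_{R_{i+2},r}|}{R_{i+2}^2 L_i}\Big)\Big]\le \exp\Big(\frac{\widetilde\lambda}{R_{i+1}^2 L_i}\Big),
\]
which, via $\widetilde\lambda/L_i=\lambda/L_{i+1}$, is exactly $\phi_{r,i+1}(\lambda)\le \exp(\lambda/(R_{i+1}^2 L_{i+1}))$, closing the induction.

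To conclude, for a general $R\ge r$ pick $i$ with $R_i\le R<R_{i+1}$. For $z$ with $R\le \|z\|\le 2R$, I would decompose $|\eta_{r,2R}|$ according to its first visits to the shells $\partial B_{R_i}$, $\partial B_{R_{i+1}}$ and $\partial B_{R_{i+2}}$, exactly as in the last step of the proof of Proposition~\ref{prop.phi.i}, and then apply the induction bound together with Lemma~\ref{lem.phir2} and \eqref{inside2} of Theorem~\ref{theo.inside}, both of which hold in dimension four. The equivalence $L_i\asymp L_{i+1}\asymp 1+\log(R/r)$ converts the bound with denominator $L_i$ into the bound with denominator $\log(R/r)$ stated in the proposition, up to a universal constant. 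The main subtlety, and what distinguishes the $d=4$ argument from the $d\ge 5$ case, is the rescaling $\widetilde\lambda=\lambda L_i/L_{i+1}$: it is precisely what allows the cushion factor $1/4$ in Lemma~\ref{lem.phir3} to match the ratio $R_i^2/R_{i+1}^2$ and make the induction close with the same $\lambda_0$ and the same constant $1$ in the exponent at every scale, despite the logarithmic normalization changing from $L_i$ to $L_{i+1}$.
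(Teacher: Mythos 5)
You use Lemma~\ref{lem.phir3} at every scale as your one-scale building block, quoting its hypothesis ``$d\ge 3$.'' This is where the proposal breaks: the conclusion of Lemma~\ref{lem.phir3} is in fact \emph{false} in dimension four, despite its stated hypothesis. Indeed, for $x\in\partial B_{2r}$ one has $\mathbf P_x(H_r<H_{4r})\to 1/5$ and $\mathbf P_x(H_{4r}<H_r)\to 4/5$ as $r\to\infty$ in $d=4$, so the first-order term of the exponent in Lemma~\ref{lem.phir3} equals
\[
\frac{\lambda}{r^2}\cdot\frac{1}{5}+\frac{\lambda}{16r^2}\cdot\frac{4}{5}=\frac{\lambda}{4r^2},
\]
which already saturates the claimed bound $\exp(\lambda/(4r^2))$ at leading order. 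Since $|\eta_{r,4r}|$ and $|\eta_{4r,r}|$ have variance of order $r^2$, the quadratic term in the expansion of the exponential moment is $\Theta(\lambda^2/r^2)$, strictly larger than the $O(\lambda^2/r^4)$ term in the expansion of $\exp(\lambda/(4r^2))$; hence for every $\lambda>0$ and $r$ large the left side of Lemma~\ref{lem.phir3} \emph{exceeds} $\exp(\lambda/(4r^2))$. (The intermediate bound $\sup_x\mathbb E_x[|\eta_{r,4r}|]\le 1/6$ in its proof is a $d\ge 5$ statement; in $d=4$ the limit is $1/5>1/6$.) Thus the ``cushion factor $1/4$'' you invoke is not a cushion at all in $d=4$: the factor $1/4$ is the exact first-order value, with no slack to absorb second-order corrections, and your constant-$1$ induction cannot close.

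This is precisely the technical obstacle that the paper's proof confronts, and why it does not reuse Lemma~\ref{lem.phir3} directly. Instead it proves the sharpened Lemma~\ref{lem.phir4}, which replaces the would-be bound $\exp(\lambda/(4r^2))$ by the honest bound $\exp\big(\frac{\lambda(1+c/r+c\lambda)+\lambda'(1+c/r+c\lambda')}{5r^2}\big)$, with the $(1+c/r+c\lambda)$ correction made explicit. The induction is then run with the functional $\psi_{r,i}(\lambda,\lambda')$ of two parameters (normalized by $ir^2$ and $iR_i^2$), and the correction factors are tracked through the nested structure via the coefficients $\alpha_i(t)=\prod_{j\le i}(1+\kappa/R_j+\kappa t/i)$ and $\beta_i(t)$; the key point is that the $1/R_j$-corrections are summable in $j$ and the $\lambda$-corrections decay like $1/i$ (because of the $ir^2$ normalization, the effective parameter fed into Lemma~\ref{lem.phir4} shrinks), so that $\alpha_i$ and $\beta_i$ stay bounded uniformly in $i$. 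Your rescaling $\widetilde\lambda=\lambda L_i/L_{i+1}$ is a clean device and would indeed close the induction with constant $1$ if the one-scale lemma had strict slack, as it does in $d\ge 5$; but in $d=4$ the slack is gone, and the paper's heavier bookkeeping with slowly-varying $\alpha_i,\beta_i$ is not an optional complication but the essential fix.
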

Once this proposition is established, the end of the proof of Theorem~\ref{theo.outside4} is almost the same as in dimension five and higher. Indeed, 
let us postpone the proof of Proposition~\ref{prop.phi.d4} for a moment, and conclude the proof of Theorem~\ref{theo.outside4} first. 

\begin{proof}[Proof of Theorem~\ref{theo.outside4}] 
Let $r\ge r_0$, with $r_0$ as in Proposition~\ref{prop.phi.d4}, and $R\ge 2r$ be given, and let also $x$ be such 
that $2r\le \|x\|\le R$. Let $i_0$ be the smallest integer, 
such that $\|x\|\le r2^{i_0}$, 
and let $I$ be the smallest integer such that $R\le r2^{i_0+I}$. 
Recall the definition \reff{def-R} of $R_i$ and $Z_{r,i}$, and notice that 
$$
|\eta_{r,R}| \le \sum_{i=0}^I Z_{r,i}.
$$ 
Moreover, as in the proof of Theorem~\ref{theo.outside5}, on one hand Proposition~\ref{prop.phi.d4} shows that (with the same constant $c$ as in its statement), 
$$
\mathbb E_x\Big[\exp\big(\frac{\lambda}{r^2\log(R/r)}
\sum_{i=0}^I Z_{r,i}\big)\Big] \le\exp\big(\frac{c\lambda}{\|x\|^2\log(R/r)}\big)
\cdot \mathbb E_x\Big[ \exp\big(\frac{2c\lambda}{\log(R/r)}
\cdot \sum_{i=0}^{I-1} \frac{ |\eta_{R_i}|}{R_i^2}\big)\Big]^{1/2},
$$
and on the other hand, using~\eqref{inside3}, we get by induction
$$ \mathbb E_x\Big[ \exp\big(\frac{2c\lambda}{\log(R/r)}\cdot \sum_{i=0}^{I-1} 
\frac{ |\eta_{R_i}|}{R_i^2}\big)\Big] 
\le \exp\big(\frac{8c\lambda}{\|x\|^2 \log(R/r)}\big),$$
which concludes the proof of~\eqref{outside4.1}. The proof of~\eqref{outside4.2} follows exactly as for the corresponding statement 
in Theorem~\ref{theo.inside}, namely~\eqref{inside1}. Indeed, using a similar argument as in the proof of Lemma~\ref{lem.hitting}, 
one can see that uniformly over $R\ge 4r$, one has $\inf_{x\in \partial B_{2r}} \mathbb P(\eta_{r,R}\neq \varnothing) \ge c/r^2$, for some constant $c>0$. Finally concerning the proof of~\eqref{outside4.3}, 
one can argue as follows. First using the first point of Lemma~\ref{lem.mom.etar}, one has for some constant $\varepsilon>0$, 
\begin{align*}
\mathbb E_x\Big[\exp\big(\frac{\lambda|\eta_{r,R}|}{r^2\log(R/r)}\big)\Big]  \le &\ 1+\frac{(1-\varepsilon)\lambda}{r^2\log(R/r)} + \frac{\lambda^2}{r^4\log(R/r)^2} 
\mathbb E_x\Big[|\eta_{r,R}|^2\exp\big(\frac{2\lambda|\eta_{r,R}|}{r^2\log(R/r)}\big)\Big]. 
\end{align*}
Then, using Proposition~\ref{prop.hit.upper} and H\"older's inequality at the third line, and \eqref{outside4.2} and the second point of Lemma~\ref{lem.mom.etar} at the last one, we get  
\begin{align*}
& \mathbb E_x\Big[\exp\big(\frac{\lambda|\eta_{r,R}|}{r^2\log(R/r)}\big)\Big]\\
 & \le 1+\frac{(1-\varepsilon)\lambda}{r^2\log(R/r)} + \frac{\lambda^2}{r^4\log(R/r)^2} 
\mathbb E_x\Big[|\eta_{r,R}|^2\exp\big(\frac{\lambda|\eta_{r,R}|}{r^2\log(R/r)}\big)\ 
\Big|\ \eta_{r,R}\neq \emptyset\Big]\cdot \mathbb P_x(|\eta_{r,R}|>0) \\
& \le 1+\frac{(1-\varepsilon)\lambda}{r^2\log(R/r)} + \frac{c\lambda^2}{r^{4+2/3}\log(R/r)^2} 
\mathbb E_x\big[|\eta_r|^3\big]^{2/3} \cdot 
\mathbb E_x\Big[\exp\big(\frac{3\lambda|\eta_{r,R}|}{r^2\log(R/r)}\big)\ 
\Big|\ \eta_{r,R}\neq \emptyset\Big]^{1/3}\\
& \le 1+\frac{(1-\varepsilon)\lambda}{r^2\log(R/r)} + \frac{c\lambda^2}{r^2\log(R/r)^2}\le \exp\big(\frac{(1-\varepsilon')\lambda}{r^2\log (R/r)}\big), 
\end{align*} 
for some constant $\varepsilon'\in(0,1)$, proving well~\eqref{outside4.3}. This concludes the proof of Theorem~\ref{theo.outside4}. 
\end{proof}
It remains now to prove Proposition~\ref{prop.phi.d4}, which requires some more care than for the proof of Proposition~\ref{prop.phi.i}. 
In particular one needs first an improved version of Lemma \ref{lem.phir3}. 

\begin{lemma}\label{lem.phir4}
Assume $d=4$. There exists positive constants $c$ and $\lambda_0$, such that for any $0\le \lambda, \lambda'\le \lambda_0$, and any $r\ge 1$, 
$$\sup_{x\in \partial B_{2r}} \mathbb E_x \Big[\exp\big(\lambda\frac{|\eta_{r,4r}|}{r^2} + 
\lambda' \frac{|\eta_{4r,r}|}{4r^2}\big)\Big] \le \exp\Big(\frac{\lambda(1+\frac{c}{r}+c\lambda)+ \lambda'(1+\frac{c}{r}+c\lambda')}{5r^2}\Big). $$ 
\end{lemma}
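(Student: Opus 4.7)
The strategy is to adapt the martingale argument of Lemma~\ref{lem.phir2} to handle both exponents simultaneously, by using an affine combination $h(z):=\alpha+\beta G(z)$ of $1$ and the Green's function in place of $G$. For any $\alpha,\beta\in\mathbb R$, such $h$ is still harmonic on $\mathbb Z^4\setminus\{0\}$, and the Green's function expansion $G(z)=c_G\|z\|^{-2}+\mathcal O(\|z\|^{-4})$ from~\eqref{Green} shows that $G$ is essentially constant on each sphere $\partial B_s$, with oscillation $\mathcal O(s^{-3})$. I choose $\alpha,\beta$ (depending on $\lambda,\lambda',r$) so that $h(z)\ge \lambda/r^2$ on $\partial B_r$ and $h(z)\ge \lambda'/(4r^2)$ on $\partial B_{4r}$. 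A direct algebraic computation, based on $\inf_{\partial B_r}G-\inf_{\partial B_{4r}}G=\tfrac{15c_G}{16r^2}+\mathcal O(1/r^3)$ (which embeds the dimension-four ratio $15/16$), yields $\beta=\tfrac{16}{15c_G}(\lambda-\lambda'/4)+\mathcal O(1/r)$ and $\alpha=(-\lambda+4\lambda')/(15r^2)+\mathcal O((\lambda+\lambda')/r^3)$. Evaluating at $x\in\partial B_{2r}$ (using $G(x)=c_G/(4r^2)+\mathcal O(1/r^3)$) then gives
\[
\sup_{x\in\partial B_{2r}}h(x)\;\le\; \frac{\lambda+\lambda'}{5r^2}\Bigl(1+\frac{c}{r}\Bigr),
\]
where the factor $1/5$ encodes the dimension-four hitting probability $\mathbf P_x(H_r<H_{4r})=1/5+\mathcal O(1/r)$ at $x\in\partial B_{2r}$.

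With this tailored $h$, I run the martingale argument of Lemma~\ref{lem.phir2}. Define
\[
\overline M_n \;:=\; \sum_{u\in \mathcal Z_n(B_{4r}\setminus \partial B_r)} h(S_u)\;+\;\sum_{\substack{u\in \eta_{r,4r}\cup \eta_{4r,r}\\ |u|\le n}}h(S_u).
\]
By harmonicity of $h$ this is a martingale (live particles starting from $\partial B_{2r}$ never enter $B_r$, since doing so requires first hitting $\partial B_r$, so they avoid the singularity of $G$ at $0$), and it converges to $\overline M_\infty=\sum_{u\in \eta_{r,4r}\cup \eta_{4r,r}}h(S_u)\ge \lambda|\eta_{r,4r}|/r^2+\lambda'|\eta_{4r,r}|/(4r^2)$ by the choice of boundary values. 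A second-order expansion of the per-particle conditional moment generating function $\mathbb E[\exp((\xi_u-1)h(S_u)+\sum_{v\in\mathcal N(u)}(h(S_v)-h(S_u)))\mid S_u]$, where harmonicity and $\mathbb E[\xi_u-1]=0$ kill the first-order terms, combined with $|h|=\mathcal O((\lambda+\lambda')/r^2)$ and $|\nabla h|=\mathcal O((\lambda+\lambda')/r^3)$ on the annulus, shows this MGF is at most $\exp(C(\lambda+\lambda')^2/r^4)$ uniformly in $u$. Iterating by successive conditioning gives
\[
\mathbb E_x\Bigl[\exp\bigl(\overline M_n-h(x)-C(\lambda+\lambda')^2|\mathcal T_n(B_{4r})|/r^4\bigr)\Bigr]\;\le\;1.
\]

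Following the endgame of Lemma~\ref{lem.phir2}, Cauchy--Schwarz (combined with running the same bound at doubled parameters $(2\lambda,2\lambda')$, which only doubles the constants) together with Proposition~\ref{prop.Tr} applied on $B_{4r}$ yields $\mathbb E_x[e^{\overline M_\infty-h(x)}]\le \exp(C'(\lambda+\lambda')^2/r^2)$ for $\lambda,\lambda'$ small enough. Combining this with the bound on $h(x)$ and using $(\lambda+\lambda')^2\le 2(\lambda^2+\lambda'^2)$ produces
\[
\mathbb E_x\bigl[\exp\bigl(\lambda|\eta_{r,4r}|/r^2+\lambda'|\eta_{4r,r}|/(4r^2)\bigr)\bigr]\;\le\; \mathbb E_x\bigl[e^{\overline M_\infty}\bigr]\;\le\; \exp\Bigl(\tfrac{\lambda(1+c/r+c\lambda)+\lambda'(1+c/r+c\lambda')}{5r^2}\Bigr),
\]
as claimed, after renaming $c$. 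The main obstacle is the careful choice of $\alpha,\beta$ and the resulting algebraic identity $\sup_{\partial B_{2r}}h\le (\lambda+\lambda')(1+c/r)/(5r^2)$, which must be tracked with the correct $\mathcal O(1/r)$ error coming from the Green's function expansion; once this is in place, everything else transcribes Lemma~\ref{lem.phir2}'s proof with the two-parameter quadratic form $(\lambda+\lambda')^2$ in place of $\lambda^2$.
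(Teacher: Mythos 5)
Your approach is correct but takes a genuinely different route from the paper. The paper treats the two exponents separately: for each of $|\eta_{r,4r}|$ and $|\eta_{4r,r}|$ it expands the exponential to second order, inserts the exact first-moment identity $\mathbb E_x[|\eta_{r,4r}|]=\mathbf P_x(H_r<H_{4r})\le \tfrac{1}{5}(1+c/r)$ (and its analogue), controls the remainder via Proposition~\ref{prop.hit.upper} and the exponential moment bound inherited from Lemma~\ref{lem.phir2}, and then joins the two one-parameter bounds by Cauchy--Schwarz. Your version instead runs the $\widetilde M$-martingale machinery of Lemma~\ref{lem.phir2} once, with the Green's function replaced by a tailored harmonic function $h=\alpha+\beta G$ whose boundary values on $\partial B_r$ and $\partial B_{4r}$ encode the two exponent normalizations simultaneously. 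The constant $1/5$ appears in both proofs for the same reason: you get it from the algebra $h(2r)=\tfrac{\lambda+\lambda'}{5r^2}+\mathcal O(1/r)$, the paper from $\mathbf P_x(H_r<H_{4r})=\tfrac{G(x)-G(4r)}{G(r)-G(4r)}=\tfrac 15+\mathcal O(1/r)$, and these are identical computations. What your approach buys is a single joint argument without the final Cauchy--Schwarz recombination; what the paper buys is less bookkeeping, since it reuses the Lemma~\ref{lem.phir3} template verbatim. A couple of details your sketch glosses over but are worth making explicit: the sign of $\beta\propto(\lambda-\lambda'/4)$ is not fixed, so when enforcing $h\ge\lambda/r^2$ on $\partial B_r$ and $h\ge\lambda'/(4r^2)$ on $\partial B_{4r}$ the roles of $\inf_{\partial B_s}G$ and $\sup_{\partial B_s}G$ swap according to that sign (a harmless $\mathcal O(1/r)$ adjustment since the oscillation of $G$ on $\partial B_s$ is $\mathcal O(s^{-3})$); and the Cauchy--Schwarz step you invoke to decouple $\overline M_n$ from $|\mathcal T_n(B_{4r})|$ produces $4C(\lambda+\lambda')^2$, not $2C(\lambda+\lambda')^2$, inside the exponent of the second factor, which is again harmless once Proposition~\ref{prop.Tr} absorbs it.
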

\begin{proof}
The proof is similar to the proof of Lemma~\ref{lem.phir3}. First we write for any $x\in \partial B_{2r}$, 
$$\mathbb E_x\Big[\exp\big(\frac{\lambda |\eta_{r,4r}|}{r^2}\big)\Big] \le 1+ \frac{\lambda \mathbb E_x\big[|\eta_{r,4r}|\big]}{r^2} + 
\frac{\lambda^2}{r^4} \mathbb E_x\Big[|\eta_{r,4r}|^2\exp\big(\frac{\lambda |\eta_{r,4r}|}{r^2}\big)\Big],$$ 
and
$$\mathbb E_x\big[|\eta_{r,4r}|\big] = \mathbf P_x(H_r<H_{4r}) \le  \frac{G(x) - G(4r)}{G(r)- G(4r)},$$
with $G(s)= \inf_{z\in \partial B_s} G(z)$, for $s\ge 1$. Using \eqref{Green}, we deduce that in dimension four, for some constant $c>0$, 
$$\sup_{x\in \partial B_{2r}} \mathbb E_x\big[|\eta_{r,4r}|\big] \le \frac{1}{5} (1+ c/r).$$ 
We conclude as in the proof of Lemma~\ref{lem.phir3} that for $\lambda$ small enough,  
 $$\sup_{x\in \partial B_{2r}} \mathbb E_x\Big[\exp\big(\frac{\lambda |\eta_{r,4r}|}{r^2}\big)\Big] \le \exp\big(\frac{\lambda(1+\frac{c}{r}+c\lambda)}{5r^2}\big). $$ 
 The same argument leads to 
 $$\sup_{x\in \partial B_{2r}} \mathbb E_x\Big[\exp\big(\frac{\lambda' |\eta_{4r,r}|}{4r^2}\big)\Big] \le \exp\big(\frac{\lambda'(1+\frac{c}{r}+c\lambda')}{5r^2}\big),$$
 and the lemma follows using Cauchy-Schwarz inequality.  
\end{proof}

\begin{proof}[Proof of Proposition~\ref{prop.phi.d4}] 
Given $r>0$, let $R_i = r2^i$, for $i\ge 1$. We will prove the existence of positive constants 
$c$, $r_0$, and $\lambda_1>0$, such that for all $r\ge r_0$, $0\le \lambda\le \lambda_1$, and $i\ge 1$, 
\begin{equation}\label{goal.prop81}
\sup_{x\in  \partial B_{R_i}}\mathbb E_x\Big[\exp\big(\frac{\lambda |\eta_{r,R_{i+1}}|}{ir^2}\big)\Big] \le \exp\big(\frac{c\lambda}{iR_i^2}\big). 
\end{equation}
Note that exactly as in the proof of Proposition~\ref{prop.phi.i}, the desired result would follow. 

Now consider the functions $\psi_{r,i}$, defined for $\lambda,\lambda'\ge 0$ by  
$$\psi_{r,i}(\lambda,\lambda'):= \sup_{x\in  \partial B_{R_i}}\mathbb E_x\Big[\exp\big(\frac{\lambda |\eta_{r,R_{i+1}}|}{ir^2}+\frac{\lambda' |\eta_{R_{i+1},r}|}{iR_i^2}\big)\Big].$$ 
We claim that there exist positive constants $r_0$, $\kappa$ and $\lambda_1\le 1$, such that for all $r\ge r_0$, all $i\ge 1$, and all $0\le\lambda,  \lambda'\le \lambda_1$, 
\begin{equation}\label{induction.d4}
\psi_{r,i}(\lambda,\lambda') \le \exp\Big(   \frac{\alpha_i(\lambda+\lambda')\cdot \lambda+\beta_i(\lambda+\lambda')\cdot\lambda'}{iR_i^2}\Big),
\end{equation}
writing for $t\ge 0$, 
$$\alpha_i(t) := \prod_{j=1}^i(1+\frac{\kappa }{R_j}+\frac {\kappa t} i), \quad \text{and}\quad \beta_i(t)=1+\frac{\kappa  }{R_i}+\frac{\kappa t}{i}.$$ 
Note that once $\kappa$ is fixed, one has $\sup_i \alpha_i(1)<\infty$, hence taking $ \lambda'=0$ in~\eqref{induction.d4} gives~\eqref{goal.prop81}, and thus concludes the proof of the Proposition.

We prove~\eqref{induction.d4} by induction, and start by fixing the constants $\kappa$, $r_0$ and $\lambda_1$. For this define for $i\ge 0$, with $c$ the constant appearing in Lemma~\ref{lem.phir4}, 
$$\rho_i : = 1+\frac{c}{R_i} + \frac{10c(\lambda + \lambda')}{i+1},\quad \text{and}\quad \nu_i:= 1+\frac{\kappa }{R_i} + \frac{\kappa(\lambda + \lambda')}{i+1}.$$  
Then fix $\kappa$ and $r_0$ large enough, and $\lambda_1\le 1$ small enough, such that for all $r\ge r_0$, and all $0\le \lambda, \lambda'\le \lambda_1$, 
\begin{equation}\label{rhoinui1}
\rho_0 \le 5/3,   \qquad \nu_0\le 2, \qquad \sup_i \alpha_i(2\lambda_1)\le 2, \qquad \lambda_1\le \frac{\lambda_0}{10}, 
 \end{equation}
 where $\lambda_0$ is the constant given by Lemma~\ref{lem.phir4}, and
\begin{equation}\label{rhoinui2}
 \frac{5\rho_i}{5-\rho_i\nu_i} \le \frac{5}{4}\cdot \big(1+\frac{\kappa }{R_{i+1}} + \frac{\kappa(\lambda+\lambda')}{i+1}\big). 
 \end{equation}
To be more precise, one may first choose $\kappa=100c$, and then take $r_0$ large enough, and $\lambda_1$ small enough 
so that~\eqref{rhoinui1} and~\eqref{rhoinui2} are satisfied, for all $r\ge r_0$, $i\ge 0$ and $0\le \lambda,\lambda'\le \lambda_1$.
Now~\eqref{induction.d4} for $i=1$ is given by Lemma~\ref{lem.phir4}, at least provided 
$c\le \kappa/2$, which we can always assume.
Assume next that~\eqref{induction.d4} holds for some $i\ge 1$, and let us prove it for $i+1$. Define $\{\zeta_k^j\}_{k\ge 0}$, for $j\in \{0,1,2,3\}$, 
as well as $\{\mathcal G_k\}_{k\ge 0}$ and $\{\mathcal H_k\}_{k\ge 0}$, as in the proof of Proposition~\ref{prop.phi.i}. 
Recall that by monotone convergence, one has 
\begin{equation}\label{psi.conv.mon}
\psi_{r,i+1}(\lambda,\lambda') \le \lim_{n \to \infty} \sup_{x\in \partial B_{R_{i+1}}}
\mathbb E_x\Big[\exp\big(\frac{\lambda\sum_{k=0}^n |\zeta_k^0|}{(i+1)r^2}+\frac{\lambda'\sum_{k=0}^n |\zeta^3_k|}{(i+1)R_{i+1}^2}\big)\Big]. 
\end{equation} 
The induction hypothesis implies that almost surely, 
$$\mathbb E_x\Big[\exp\big(\frac{\lambda |\zeta_n^0|}{(i+1)r^2}\big) \ \big|\ \mathcal G_n \Big] \le 
\exp\big( \frac{\widetilde \alpha_i \lambda|\zeta_n^1|}{(i+1)R_i^2}\big),\qquad \text{with }\widetilde \alpha_i: = \alpha_i(\frac{i(\lambda+\lambda')}{i+1}). $$ 
Note in particular that $(\sup_i \widetilde \alpha_i)\lambda \le 2\lambda \le \lambda_0$, by~\eqref{rhoinui1}. Hence, an application of Lemma~\ref{lem.phir4} yields that almost surely 
$$\mathbb E_x\Big[\exp\big(\frac{\widetilde \alpha_i\lambda |\zeta_n^1|}{(i+1)R_i^2}+ \frac{\lambda' |\zeta^3_n|}{(i+1)R_{i+1}^2}\big)
 \ \big|\ \mathcal H_n \Big] \le 
\exp\big(\frac{\rho_i( \widetilde \alpha_i \lambda  + \lambda') |\zeta_n^2|}{5(i+1)R_i^2}\big). $$  
Observe next that by~\eqref{rhoinui1}, one has 
$$\frac{\rho_i}{5}(\widetilde \alpha_i \lambda  + \lambda')\le  \lambda_1.$$
Thus applying again the induction hypothesis, we get that almost surely, 
$$\mathbb E_x\Big[\exp\big(\frac{\lambda |\zeta_{n-1}^0|}{(i+1)r^2}+ \frac{\rho_i( \widetilde \alpha_i \lambda  + \lambda') |\zeta_n^2|}{5(i+1)R_i^2}\big) \ \big|\ \mathcal G_{n-1} \Big] 
\le \exp \Big(\big\{  \widetilde \alpha_i \lambda(1 +\frac{\rho_i\nu_i}{5}) + \frac{\rho_i\nu_i\lambda'}{5}\big\} \cdot \frac{|\zeta_{n-1}^1|}{(i+1)R_i^2}\Big). $$  
Now, one has by~\eqref{rhoinui1} again 
$$\widetilde \alpha_i(1+\frac{\rho_i\nu_i}{5})\lambda + \frac{\rho_i\nu_i}{5}\lambda'  \le \big(2(1+4/5) +4/5\big)\lambda_1 \le 10\cdot \lambda_1 \le \lambda_0.$$
Hence one may apply Lemma~\ref{lem.phir4}, which gives (note the factor $10$ appearing in the definition of $\rho_i$), 
\begin{align*}
& \mathbb E_x\Big[\exp \Big(\big\{  \widetilde \alpha_i\lambda(1 +\frac{\rho_i\nu_i}{5}) + \frac{\rho_i\nu_i\lambda'}{5}\big\}  \cdot \frac{|\zeta_{n-1}^1|}{(i+1)R_i^2}+ \frac{\lambda'|\zeta_{n-1}^3|}{(i+1)R_{i+1}^2}\Big)\ \Big| \ \mathcal G_{n-1} \Big] \\
& \le \exp\Big( \big( \widetilde \alpha_i\lambda  + \lambda'\big)\cdot(1+\frac{\rho_i\nu_i}{5})\cdot \frac{ \rho_i |\zeta_n^2|}{5(i+1)R_i^2}\Big). 
\end{align*}
Then an elementary induction shows that for all $n\ge 1$, 
$$\mathbb E_x\Big[\exp\big(\frac{\lambda\sum_{k=0}^n |\zeta_k^0|}{(i+1)r^2}+\frac{\lambda'\sum_{k=0}^n |\zeta^3_k|}{(i+1)R_{i+1}^2}\big)\Big] 
\le \exp\Big(\frac{4\rho_i}{5}\cdot \frac{ \widetilde \alpha_i\lambda+\lambda'}{(i+1)R_{i+1}^2}\cdot \sum_{k=0}^n (\frac{\rho_i\nu_i}{5})^{k}\Big).$$
To conclude, note that by~\eqref{rhoinui2}, 
$$\rho_i\sum_{k=0}^\infty (\frac{\rho_i\nu_i}{5})^{k} = \frac{5\rho_i}{5-\rho_i\nu_i} \le \frac{5}{4}\cdot \big(1+\frac{\kappa }{R_{i+1}} + \frac{\kappa(\lambda+\lambda')}{i+1}\big) =\frac 54 \beta_{i+1}(\lambda+\lambda').$$
Finally notice that $\widetilde \alpha_i \beta_{i+1}(\lambda+\lambda') = \alpha_{i+1}(\lambda+\lambda')$, thereby finishing the proof of the induction 
step for \eqref{induction.d4}. 
This concludes the proof of Proposition~\ref{prop.phi.d4}. 
\end{proof}


\section{Proof of Theorem~\ref{theo.main}: upper bounds in $d\ge 4$}
\label{sec-UB4}

The proof in dimension four is slightly different from 
the higher dimensional case, but first one needs 
to strengthen the result of Proposition~\ref{prop.Tr} as follows. 
Recall that we denote by $\mathcal T(B_r)$ the time spent in the ball $B_r$ 
by a BRW, 
for which we freeze the particles reaching the 
boundary of the ball, see~\eqref{def.Tr}. 
\begin{proposition}\label{prop.Tr+}
Assume $d\ge 1$. 
There exist positive constants $c$ and $C$, such that for all $r\ge 1$, and all $t\ge r^4$, 
$$\sup_{x\in B_r} \mathbb P_x (|\mathcal T(B_r)|>t) \le \frac{C}{r^2} \exp(-ct/r^4). $$
\end{proposition}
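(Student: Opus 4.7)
The plan is to bootstrap the exponential moment estimate from Proposition~\ref{prop.Tr} via a Markov-type inequality applied to the shifted test function $y \mapsto e^{\lambda y/r^4} - 1$, which vanishes at $y=0$. This vanishing property is the key device: it is precisely what allows the missing $1/r^2$ pre-factor to come out of the log-MGF bound, since contributions from $\{\mathcal T(B_r) = \emptyset\}$ are killed by the test function.

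Concretely, first I would fix $\lambda_0 > 0$ small enough that Proposition~\ref{prop.Tr} applies at $\lambda = \lambda_0$, yielding uniformly in $x \in B_r$ and $r \ge 1$,
\[
\mathbb E_x\Big[\exp\big(\lambda_0 |\mathcal T(B_r)|/r^4\big)\Big] \le \exp(c\lambda_0/r^2).
\]
Using the elementary inequality $e^u - 1 \le u e^u$ for $u \ge 0$, this gives
\[
\sup_{x \in B_r} \mathbb E_x\Big[\exp\big(\lambda_0 |\mathcal T(B_r)|/r^4\big) - 1\Big] \le e^{c\lambda_0/r^2} - 1 \le \frac{C}{r^2},
\]
for some constant $C$ depending on $\lambda_0$. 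Since $y \mapsto e^{\lambda_0 y/r^4} - 1$ is nonnegative and increasing, Markov's inequality then yields
\[
\big(e^{\lambda_0 t/r^4} - 1\big) \cdot \mathbb P_x(|\mathcal T(B_r)| > t) \le \mathbb E_x\Big[\exp\big(\lambda_0 |\mathcal T(B_r)|/r^4\big) - 1\Big] \le \frac{C}{r^2}.
\]

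The hypothesis $t \ge r^4$ enters only at the final step: it forces $e^{\lambda_0 t/r^4} \ge e^{\lambda_0}$, so that $e^{\lambda_0 t/r^4} - 1 \ge (1 - e^{-\lambda_0})\, e^{\lambda_0 t/r^4}$, and dividing through gives the claimed bound with $c = \lambda_0$. There is no serious obstacle in this derivation; all of the technical work is already contained in Proposition~\ref{prop.Tr}. The only conceptual point is to apply Markov's inequality to $e^{\lambda y} - 1$ rather than to $e^{\lambda y}$, which is what translates the $O(1/r^2)$ pre-factor in the log-MGF (reflecting the small probability that a BRW started in $B_r$ stays long enough to make $\mathcal T(B_r)$ sizeable at all) into the $1/r^2$ pre-factor in the tail.
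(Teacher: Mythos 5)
Your argument is correct and is genuinely simpler than the paper's own proof, which instead decomposes $|\mathcal T(B_r)|$ into the contribution of particles born before generation $r^2$ (handled by Lemmas~\ref{lem.BGW} and~\ref{lem.BGW2} via a dyadic splitting) and the contribution of the subtrees rooted at generation $r^2$ (handled by Proposition~\ref{prop.Tr}, Kolmogorov's estimate~\eqref{Kol}, and Lemma~\ref{lem.BGW}). Your observation that applying Markov's inequality to the nondecreasing, nonnegative function $y\mapsto e^{\lambda_0 y/r^4}-1$ directly extracts the $1/r^2$ pre-factor from the bound $e^{c\lambda_0/r^2}-1\lesssim 1/r^2$ of Proposition~\ref{prop.Tr}, and that $t\ge r^4$ is only needed to convert $e^{\lambda_0 t/r^4}-1$ into $\Theta(e^{\lambda_0 t/r^4})$, is a clean shortcut that bypasses the paper's decomposition entirely.

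One small correction to your intuition, which does not affect the mathematics: the event $\{\mathcal T(B_r)=\emptyset\}$ never occurs under $\mathbb P_x$ with $x\in B_r$, since by the definition~\eqref{def.Tr} the root $\emptyset$ always belongs to $\mathcal T(B_r)$ when $S_\emptyset=x\in B_r$, so $|\mathcal T(B_r)|\ge 1$ almost surely. The shifted test function therefore does not kill an atom at zero. The actual source of the $1/r^2$ factor is, as you say in your closing parenthetical, that $\mathbb E_x\big[e^{\lambda_0|\mathcal T(B_r)|/r^4}\big]-1$ is small because the event $\{|\mathcal T(B_r)|\gtrsim r^4\}$ requires the underlying BGW tree to survive of order $r^2$ generations, which by~\eqref{Kol} has probability of order $1/r^2$; that is exactly what Proposition~\ref{prop.Tr} encodes in the $c\lambda/r^2$ exponent of its bound, and your Markov step reads it back out correctly.
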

\begin{proof}
Given $u\in \mathcal T$, and $\Lambda\subset \Z^d$, we 
call $\mathcal T^u(\Lambda)$ the random variable with the same law 
as $\mathcal T(\Lambda)$ shifted in the subtree emanating from $u$. 
Recall also that $\mathcal T_n=\{u\in  \mathcal T: |u|\le n\}$, and assume without loss of generality that $r^2$ is an integer. 
One has 
\begin{equation}\label{Tr.rec}
\mathbb P_x(|\mathcal T(B_r)|>t)  \le \mathbb P_x\Big(\sum_{|u|=r^2} 
|\mathcal T^u(B_r)|\ge t/2\Big)+  
\mathbb P\big(|\mathcal T_{r^2}|>t/2\big).
\end{equation}
For the first term on the right-hand side we use first Proposition~\ref{prop.Tr} and Chebyshev's exponential inequality, 
which give for any $\lambda$ small enough, 
$$
\mathbb P_x\Big(\sum_{|u|=r^2} |\mathcal T^u(B_r)|\ge t/2\Big) \le e^{- \frac{\lambda t}{2r^4}} \cdot \mathbb E\Big[\exp\big(c\lambda \frac{Z_{r^2}}{r^2}\big)\ \Big|\ Z_{r^2}\neq 0\Big]\cdot \mathbb P(Z_{r^2}\neq 0) . $$ 
Then Lemma~\ref{lem.BGW} and \eqref{Kol} yield for some constant $C>0$, 
$$
\mathbb P_x\Big(\sum_{|u|=r^2} |\mathcal T^u(B_r)|\ge t/2\Big) 
\le \frac{C}{r^2} \exp(-\frac{\lambda t}{2r^4}).
$$
Concerning now the second term on the right-hand side of~\eqref{Tr.rec}, we use a similar idea. 
Let $I$ be the smallest integer, such that $r^2\le 2^I$. Using this time both Lemmas~~\ref{lem.BGW} and~\ref{lem.BGW2} shows that 
for some $\lambda$ small enough, and positive constants $c$, and $C$, 
\begin{align*}
\mathbb P\big(|\mathcal T_{r^2}|>t/2\big) & \le \sum_{i=0}^{I-1} \mathbb P\big(\sum_{k=2^i}^{2^{i+1}} Z_k >2^{i-I-1} t\big)\\
& \le \sum_{i=0}^{I-1} \exp\big(-\frac{\lambda 2^{i-I-1}t}{2^{2i}}\big)\cdot \mathbb E\Big[\exp\big(\frac{c\lambda Z_{2^i}}{2^i}\big)\ \Big|\ Z_{2^i}\neq 0\Big]\cdot \mathbb P(Z_{2^i}\neq 0) \\
& \le \frac{C}{r^2} \sum_{i=0}^{I-1}2^{I-i} \exp\big(-\frac{\lambda 2^{I-i}t}{8r^4}\big) \le \frac{C}{r^2} \exp(-\frac{\lambda t}{8r^4}), 
\end{align*}
using $t\ge r^4$, for the last inequality. This concludes the proof of the proposition. 
\end{proof}

We can now finish the proof of the upper bounds in Theorem~\ref{theo.main} 
in dimensions four and higher. 

Assume first that $d\ge 5$. Note that it suffices to prove the result for $r$ large enough. In particular we assume now that $r\ge r_0$, with $r_0$ given by Theorem~\ref{theo.outside5}. Define two sequences $\{\zeta_k^j\}_{k\ge 0}$, for $j=1,2$, by $\zeta_0^1 = \{\emptyset\}$ is the root of the tree, and for $k\ge 0$, 
$$
\zeta_k^2 := \bigcup_{u\in \zeta_k^1} \eta^u_{2r}, 
\quad \text{and}\quad \zeta_{k+1}^1 := \bigcup_{u\in \zeta_k^2} \eta^u_r.
$$ 
Let also for $k\ge 0$, 
$$L_k(B_r) := \sum_{u\in \zeta_k^1} |\mathcal T^u(B_{2r})|, $$ 
with the notation from the proof of the previous proposition. Then by definition, one has 
$$\ell_{\mathcal T}(B_r) \le \sum_{k\ge 0} L_k(B_r).$$
Therefore by Proposition~\ref{prop.hit.inside}, for any $\lambda>0$, 
$$
\mathbb P\big(\ell_{\mathcal T}(B_r)>t\big) \le 
\mathbb P\big(|\mathcal T(B_{2r})|>t/2\big) + 
\frac{C}{r^2}\exp(-\frac{\lambda t}{2r^4})\cdot \mathbb E\Big[\exp\big(\frac{\lambda}{r^4}\sum_{k\ge 1}  L_k(B_r)\big)\ \Big|\ |\eta_r|> 0 \Big]. $$ 
The first term on the right-hand side is handled by Proposition~\ref{prop.Tr+}. Hence only the last expectation is at stake, and it just 
amounts to show that it is bounded. By monotone convergence one has 
$$\mathbb E\Big[\exp\big(\frac{\lambda}{r^4}\sum_{k\ge 1}  L_k(B_r)\big)\ \Big|\ 
|\eta_r|> 0 \Big] = \lim_{n\to \infty} \mathbb E\Big[\exp\big(\frac{\lambda}{r^4}\sum_{k=1}^n  L_k(B_r)\big)\ \Big|\ |\eta_r|> 0 \Big]. $$ 
Note also that if $\mathcal G_n$ denotes the 
sigma-field generated by the BGW tree cut at vertices in $\zeta_n^1$, 
together with the positions of the BRW on this subtree, then by Proposition~\ref{prop.Tr}, almost surely for all $\lambda$ small enough, 
$$
\mathbb E\Big[\exp\big(\frac{\lambda L_n(B_r)}{r^4}\big)\ \Big|\ \mathcal G_n  \Big] \le \exp\big(\frac{c\lambda |\zeta_n^1|}{r^2}\big),$$
for some constant $c>0$. It follows that for any $n\ge 0$,  
$$
\mathbb E\Big[\exp\Big(\frac{\lambda}{r^4}\sum_{k=1}^n  L_k(B_r)  - \frac{c\lambda}{r^2}\sum_{k=1}^n |\zeta_k^1| \Big)\ \Big|\ |\eta_r|> 0 \Big]\le 1. $$ 
Hence by Cauchy-Schwarz, it just amounts to show that for $\lambda$ small enough, the sequence $\{u_n(\lambda)\}_{n\ge 1}$ defined by 
$$u_n(\lambda):=\mathbb E\Big[\exp\Big(\frac{c\lambda}{r^2}\sum_{k=1}^n |\zeta_k^1| \Big)\ \Big|\ |\eta_r|> 0 \Big], $$
is bounded. By combining~\eqref{inside3} and~\eqref{outside3}, we get that 
for $\lambda$ small enough, and some $\varepsilon>0$, one has for any $k\ge 2$, almost surely 
$$\mathbb E\Big[\exp\Big(\frac{\lambda}{r^2}|\zeta_k^1| \Big)\ \Big|\ \mathcal G_{k-1} \Big] \le \exp\big(\frac{(1-\varepsilon)\lambda}{r^2} |\zeta_{k-1}^1|\big),$$
and using also \eqref{inside1}, we get for some constant $c'>0$, 
$$ \mathbb E\Big[\exp\Big(\frac{\lambda}{r^2}|\zeta_1^1| \Big)\ \Big|\ 
|\eta_r|> 0 \Big] \le \exp(c'\lambda).$$
We deduce that for any $n\ge 1$, and $\lambda$ small,
that $\{u_n(\lambda)\}_{n\ge 1}$ is bounded through 
$$u_n(\lambda) \le \exp\big(c'\lambda 
\sum_{k=0}^n (1-\varepsilon)^k)\le \exp(c'\lambda/\varepsilon),$$
This concludes the proof of the upper bound in Theorem~\ref{theo.main}, 
in case $d\ge 5$.

In the case when $d=4$, the proof follows a similar pattern, but we
need to consider a truncated deposition process on $B_r$.
Instead of $\{\zeta_k^j\}_{k\ge 0}$, 
set $\tilde \zeta_0^1 = \{\emptyset\}$, and 
with $R = r2^I$, and $I:=\sqrt{t/r^4}$, we define for $k\ge 0$, 
$$
\tilde \zeta_k^2:= \bigcup_{u\in \tilde \zeta_k^1} \eta^u_{2r}, 
\quad \text{and}\quad 
\tilde \zeta_{k+1}^1: = \bigcup_{u\in \tilde \zeta_k^2} 
\eta^u_{r,R}.
$$ 
Then we define similarly for $k\ge 0$, 
$$
L_k(B_r) := \sum_{u\in \tilde \zeta_k^1} |\mathcal T^u(B_{2r})|,
$$
and Chebychev's inequality reads
\begin{equation}\label{d4-dist1}  
\begin{split}
\mathbb P\big(\ell_{\mathcal T}(B_r)>t\big) \le &
\mathbb P\big(|\mathcal T(B_{2r})|>\frac t2\big) +
 \mathbb P(|\eta_R|>0) \\
&\qquad + \frac{C}{r^2}e^{-\frac{\lambda t}{2Ir^4}} \cdot 
\mathbb E\Big[\exp\big(\frac{\lambda}{Ir^4}\sum_{k\ge 1}  L_k(B_r)\big)\ 
\Big|\ |\eta_r|> 0 \Big].  
\end{split}
\end{equation}
The second term is $\mathcal O(1/R^2)$ by Proposition~\ref{prop.hit.inside}, and the other terms are handled exactly as in higher dimension, using the estimates from Theorem~\ref{theo.outside4}, instead as from Theorem~\ref{theo.outside5}. \hfill $\square$


\section{Proof of Theorem~\ref{theo.main}: lower bounds in $d\ge 4$}
\label{sec-LB4}

We start with the case of dimension five and higher, 
and consider the subtle case of dimension four separately. 

\subsection{Dimension five and higher}
Assume $d\ge 5$, and recall that we may also assume here that $t\ge r^4$. 
The strategy we will use is to ask the BRW to reach $\partial B_r$ (at a cost of order $1/r^2$), 
and then make order $t/r^4$ excursions (or waves) between $\partial B_r$ and $\partial B_{r/2}$, at a cost of order $\exp(-\Theta(t/r^4))$.

More precisely, the proof relies on the following lemma, which holds in fact in any dimension (recall~\eqref{etar}). 
\begin{lemma}\label{lem.inf.etar} Assume $d\ge 1$. 
There exists a constant $c>0$, such that for any $r\ge 1$,
$$
\inf_{x\in B_{r/2}\cup \partial B_{2r}} \, \mathbb P_x( |\eta_r|\ge cr^2)  \geq c/r^2.
$$ 
\end{lemma}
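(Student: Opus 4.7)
The plan is to apply the conditional Paley--Zygmund inequality \eqref{PZ}, with $\varepsilon = 1/2$, to $X := |\eta_r|$. The desired conclusion $\mathbb{P}_x(|\eta_r| \ge c r^2) \ge c/r^2$ will follow once we establish three estimates, uniform in the allowed starting points $x$: a lower bound $\mathbb{E}_x[|\eta_r|] \gtrsim 1$, an upper bound $\mathbb{E}_x[|\eta_r|^2] \lesssim r^2$, and an upper bound $\mathbb{P}_x(|\eta_r| > 0) \lesssim 1/r^2$. Indeed, combining the first and the third gives $\mathbb{E}_x[|\eta_r| \mid |\eta_r| > 0] = \mathbb{E}_x[|\eta_r|]/\mathbb{P}_x(|\eta_r|>0) \gtrsim r^2$, so that for a suitably small constant $c>0$ the event $\{|\eta_r| \ge c r^2\}$ contains the event appearing in \eqref{PZ}, while combining the first and the second shows that the right-hand side of \eqref{PZ} is at least of order $1/r^2$.

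For the first moment, Lemma~\ref{lem.mom.etar} gives $\mathbb{E}_x[|\eta_r|] = \mathbf{P}_x(H_r < \infty)$, which equals $1$ when $x \in B_{r/2}$ (since a random walk started in a finite set exits it almost surely), and is bounded below by a positive constant when $x \in \partial B_{2r}$: in $d\ge 3$ this follows from the two-sided Green function estimate \eqref{Green2} because $\|x\|/r$ is the constant $2$, and in $d\le 2$ it holds trivially by recurrence. For the second moment, the bound $\mathbb{E}_x[|\eta_r|^2] \lesssim r^2$ is exactly the computation already carried out in the proof of Lemma~\ref{lem.hit.inside} when $x \in B_{r/2}$, and in the proof of Lemma~\ref{lem.hitting} when $x \in \partial B_{2r}$; the logarithmic factor that surfaces in dimension four is harmless because $\log(1+\|x\|/r)=\log 3$ is a constant on $\partial B_{2r}$.

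The third estimate, $\mathbb{P}_x(|\eta_r| > 0) \lesssim 1/r^2$, is precisely Proposition~\ref{prop.hit.inside} for $x \in B_{r/2}$ in every dimension, and Proposition~\ref{prop.hit.upper} for $x \in \partial B_{2r}$ in $d \ge 4$, which is the regime needed in this section. Thus no new estimate is required: the whole lemma is a direct application of Paley--Zygmund to bounds already derived. The only point one has to be careful about is verifying that the three ingredients have the correct $r$-dependence uniformly in $x$, which is automatic since both families of starting points lie at a macroscopic distance, comparable to $r$, from $\partial B_r$; this bookkeeping is therefore the main (and only modest) obstacle in writing out the proof.
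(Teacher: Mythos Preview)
Your argument is correct when $x\in B_{r/2}$ (all $d\ge 1$) and when $x\in\partial B_{2r}$ with $d\ge 4$, which is indeed the regime used later in this section. However, the lemma is stated for all $d\ge 1$, and your approach breaks for $x\in\partial B_{2r}$ in low dimension: in $d\le 2$ the walk is recurrent, so $\mathbf P_z(H_r<\infty)=1$ for every $z$, and the second-moment expansion gives $\mathbb E_x[|\eta_r|^2]=1+\sigma^2\mathbf E_x[H_r]=\infty$; in $d=3$ the Green's-function sum $\sum_{z\notin B_r} G(z-x)\|z\|^{-2}$ diverges as well. Moreover Proposition~\ref{prop.hit.upper}, which you invoke for the bound $\mathbb P_x(|\eta_r|>0)\lesssim 1/r^2$, is only available for $d\ge 4$. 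So as a proof of the lemma \emph{as stated}, there is a genuine gap.

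The paper handles this by applying Paley--Zygmund not to $|\eta_r|$ but to the truncated quantity
\[
|\overline\eta_r|\ :=\ \big|\{u\in\eta_r:\ r^2\le |u|\le 2r^2\}\big|.
\]
The truncation has two effects. First, the branch-point sum in the second moment now runs over only $O(r^2)$ generations, giving $\mathbb E_x[|\overline\eta_r|^2]\le 1+\sigma^2 r^2$ uniformly in $d\ge 1$ and in $x\in B_{r/2}\cup\partial B_{2r}$. Second, and this is the neat part, one gets $\mathbb P_x(|\overline\eta_r|>0)\le \mathbb P(Z_{r^2}\neq 0)\lesssim 1/r^2$ directly from Kolmogorov's estimate~\eqref{Kol}, bypassing both Propositions~\ref{prop.hit.inside} and~\ref{prop.hit.upper} entirely. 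The first-moment lower bound $\mathbb E_x[|\overline\eta_r|]=\mathbf P_x(r^2\le H_r\le 2r^2)\gtrsim 1$ is a CLT-type estimate valid for either family of starting points. Thus the paper's route is both dimension-free and more self-contained; your route is a legitimate shortcut once one restricts to $d\ge 4$, at the cost of citing the hitting-probability upper bounds.
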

\begin{proof} Let
$$\overline{\eta}_r := \{u\in \eta_r  :   r^2\le  |u|\le 2r^2\}. $$
Note that uniformly over $x\in B_{r/2} \cup \partial B_{2r}$, 
$$\mathbb E_x[|\overline{\eta}_r|] = \sum_{n=r^2}^{2r^2} \mathbb E[Z_n] 
\cdot \mathbf P_x(H_r = n ) = \mathbf P_x( r^2\le H_r\le 2r^2 ) \gtrsim 1, $$
and using a similar computation as in Subsection~\ref{sec.hitting}, 
\begin{equation*}
\mathbb E_x\big[|\overline{\eta}_r|^2\big] = 
\mathbb E_x[|\overline{\eta}_r|] + \sigma^2\,  \sum_{k=0}^{r^2-1} 
\mathbf E_x\big[\1\{H_r>k\}\cdot 
\mathbf P_{S_k}\big(r^2-k\le H_r\le 2r^2-k\big)^2\big] \le 1+\sigma^2 r^2. 
\end{equation*}
Now by~\eqref{Kol}, 
$$\mathbb P_x(|\overline{\eta}_r|>0 )  \le \mathbb P(Z_{r^2}\neq 0) \lesssim 1/r^2. $$
Therefore, 
$$\mathbb E_x\big[|\overline{\eta}_r| \mid |\overline{\eta}_r|>0\big] \gtrsim r^2, $$ 
and Paley-Zygmund's inequality~\eqref{PZ} gives, for some constant $c>0$, 
$$\mathbb P_x\left(|\overline{\eta}_r| > cr^2\right) \gtrsim 1/r^2.$$ 
Since $\overline{\eta}_r\subset \eta_r$, this proves the lemma. 
\end{proof}

Consequently, if we start with order $r^2$ particles on $\partial B_r$, then with probability of order $1$, there will be order $r^2$ particles reaching 
$\partial B_{r/2}$. Repeating this argument we see that with probability at least 
$\Theta(1/r^2) \cdot \exp(-\Theta(t/r^4))$, the BRW will
make at least $\Theta(t/r^4)$ waves between $\partial B_r$ and $\partial B_{r/2}$, with an implicit constant as large as wanted.  
The expected time spent on $B_r$ by each of these waves is of 
order $r^4$, simply because for any starting point on $\partial B_{r/2}$, the expected time spent on $B_r$ by the BRW killed on $\partial B_r$ is 
of order $r^2$. 
Since all the waves are independent of each other (at least conditionally on the positions of the frozen particles), we shall deduce  
that the total time spent on $B_r$ will exceed $t$.

On a formal level now, we define $\{\zeta_k^1\}_{k\ge 0}$, 
$\{\zeta_k^{1,1}\}_{k\ge 0}$ and $\{\zeta_k^2\}_{k\ge 0}$ inductively by 
$\zeta_0^{1,1} = \zeta_0^1 = \{\emptyset\}$ (the root of the tree), and then for $k\ge 0$, 
$$\zeta_k^2 = \bigcup_{u\in  \zeta_k^{1,1}} 
\eta^u_r, \qquad \zeta_{k+1}^1 = \bigcup_{u\in \zeta_k^2} 
\eta^u_{r/2}, $$
and for $\zeta_{k+1}^{1,1}$ we take any subset (chosen arbitrarily, for instance uniformly at random) of  $\zeta_{k+1}^1$ 
with $\lceil |\zeta_{k+1}^1|/2\rceil$ points. We also let $\zeta_k^{1,2}: = \zeta_k^1\setminus \zeta_k^{1,1}$, for $k\ge 0$. 
Next, set $N:=\lceil Ct/r^4\rceil$, with $C>0$ some large constant to be fixed later, and define further for $k\ge 1$, 
$$L_k(B_r):= \sum_{u\in \zeta_k^{1,2}} |\mathcal T^u(B_r)|,$$
with the notation from the proof of Proposition~\ref{prop.Tr+}. The reason why we partition the sets $\zeta_k^1$ into two parts, is that we want to keep, 
for each $k\ge 1$, some independence between $\zeta_k^2$ and $L_k(B_r)$, conditionally on $\zeta_k^1$. 
Now, note that 
\begin{equation}\label{ellinfty}
\ell_{\mathcal T}(B_r) \ge \sum_{k=1}^N L_k(B_r). 
\end{equation} 
For $k\ge 1$, define $\mathcal G_k$ as the sigma-field generated by tree cut at vertices in $\zeta_k^1$, together with the choice of $\zeta_k^{1,1}$ and the positions of the BRW on the vertices of this subtree. Let also $E_k:=\{|\zeta_k^{1,1}|\ge cr^2\}$, with $c$ chosen such that 
\begin{equation}\label{rho.def}
\rho:=\min\Big(\mathbb P(E_1), \inf_{k\ge 2} \mathbb P(E_k\mid E_{k-1})\Big)>0.
\end{equation} 
Note that the existence of $c$ is 
guaranteed by Lemma~\ref{lem.inf.etar}. Observe also that $E_k\in \mathcal G_k$, for any $k\ge 1$. 
Remember next that for some constant $c'>0$, one has for all $r\ge 1$, 
$$
\inf_{x\in \partial B_{r/2}} \mathbb E_x\big[|\mathcal T(B_r)|\big] = \inf_{x\in \partial B_{r/2}} \mathbf E_x[H_r]\ge c'r^2.$$ 
As a consequence, for any $k\ge 1$, almost surely, 
$$\mathbb E \big[L_k(B_r)\mid \mathcal G_k]\ge c'r^2|\zeta_k^{1,2}|. $$ 
On the other hand, 
$$\sup_{x\in B_r} \mathbb E_x\big[|\mathcal T(B_r)|\big] =\sup_{x\in B_r} \mathbf E_x[H_r] = \mathcal O(r^2),$$ 
which entails that for some constant $K>0$, for any $x\in B_r$, 
\begin{equation*}
\mathbb E_x\big[|\mathcal T(B_r)|^2\big]  
\le  \mathbb E_x \big[|\mathcal T(B_r)|\big] 
+ \sum_{k=0}^\infty \mathbb E_x\Big[\sum_{\substack{u\in  \mathcal T(B_r) \\ |u|=k}} \xi_u(\xi_u-1)\Big(\sup_{z\in B_r} 
\mathbb E_z\big[|\mathcal T(B_r)|\big]\Big)^2\Big]  
\le Kr^6. 
\end{equation*}
As a consequence, one has for any $k\ge 0$, 
$$\mathbb E\big[L_k(B_r)^2\mid \mathcal G_k\big] \le Kr^6 |\zeta_k^{1,2}|. $$ 
Using then Paley-Zygmund's inequality~\eqref{PZ1} we get 
\begin{equation}\label{PFk}
\mathbb P(F_k \mid \mathcal G_k)\1_{E_k}  \ge \frac{cc'^2}{4K}\1_{E_k}, \quad \text{with}\quad F_k:= \Big\{L_k(B_r)\ge \frac{cc'r^4}{2}\Big\}. 
\end{equation} 
Note now that on the event 
$$\tilde E_N := \bigcap_{k=1}^N E_k \cap F_k,$$
by~\eqref{ellinfty} one has $\ell_{\mathcal T}(B_r) \ge N cc'r^4/2$, which is larger than $t$, provided the constant $C$ in the definition of $N$ is chosen 
large enough. Moreover, for each $k\ge 0$, conditionally on $\mathcal G_k$, $E_{k+1}$ and $F_k$ are independent. Therefore by~\eqref{rho.def} and~\eqref{PFk}, one has also by induction 
$$\mathbb P(\tilde E_N) \ge \kappa^N,$$
with $\kappa=\rho cc'^2/4K$, concluding the proof of the lower bound in dimension five and higher. \hfill $\square$ 

\subsection{Dimension Four}
We assume in this section that $d=4$ and $t\ge r^4$. 
Since our scenario producing a lower bound is new, we first present
heuristics, followed by the formal proof. 

\paragraph{Heuristics.}
Our strategy for producing a local time $t$ in $B_r$ 
is very much different than the scenario presented in $d\ge 5$. 
Recall
two facts specific to dimension four, when the BRW starts from $\partial B_R$, with $R>r$: (i) first Theorem~\ref{theo.outside4} shows that $\eta_{r,2R}$ 
is typically of order
$r^2\cdot \log(R/r)$, which is much larger than the corresponding
number in $d\ge 5$; (ii) the probability of $\{\mathcal T(B_r) \neq 
\emptyset\}$ is smaller (by a factor $\log$) than the corresponding
probability in $d\ge 5$, and is of order $R^{-2}\cdot\log^{-1}(R/r)$.
With these two facts in mind, let us start with the heuristics and
set up notation.
Set $I = 2\lfloor C\sqrt{t/r^4}\rfloor $, with $C>0$ some large constant to be fixed later.   
Then for $i\le I$, let $R_i: = r 2^i$, and    
$$\mathcal S_i:= \{z\in \mathbb Z^d : R_{i-1} \le \|z\|\le R_i\}.
$$ 
Our first requirement will be for the BRW to reach distance $R_I$, 
and even more that $|\eta_{R_I}|$ be of order $R_I^2$. 
Lemma~\ref{lem.inf.etar} ensures that this has probability $\Theta(1/R_I^2)$, which is the expected cost. 
Next, by Lemma~\ref{lem.hitting} (or the second point (ii) recalled above), with probability $\Theta(1/I)$, one of the BRWs emanating from one of the vertices in $\eta_{R_I}$ will reach $\partial B_{r/2}$.
Furthermore, conditionally on this event, 
we know by Proposition~\ref{prop.Zhu.spine} 
that one spine reaches $\partial B_{r/2}$, and brings there of the order
of $r^2\cdot I$ walks, in virtue of the point (i) recalled above. 
Since from any of the vertices of the spine start independent BRWs, 
and since, as we will show, the spine typically spends a time 
of order $R_I^2$ in the shell $\mathcal S_I$, 
we deduce that at least one of the BRWs starting from the 
spine in this shell will reach as well $\partial B_{r/2}$, 
with probability $\Theta(1/I)$ again. 
Conditionally on this, one has now two spines 
crossing the shell $\mathcal S_{I-1}$. 
The probability that one of the BRWs starting from one of these two spines in $\mathcal S_{I-1}$ reaches $\partial B_{r/2}$ is thus of 
order twice $\Theta(\frac 1{I-1})$. Then by repeating this argument in all the shells $\mathcal S_I,\dots,\mathcal S_{I/2}$, 
one can make sure that $I/2$ spines reach $\partial B_{r/2}$, at a total cost of only 
$(1/R_I^2)\cdot \exp(-\Theta(I))$, which is still affordable. 
To conclude we know that the spines typically come with 
order $r^2(I/2+\dots+I)\ge r^2I^2/4$ walks on $\partial B_{r/2}$.
Since each of them leads afterwards to a mean local time order 
$r^2$ in $B_r$, this concludes the heuristics.  We show in 
Figure~\ref{fig:dessin-d4} the many spines originating from successive
shells: the green spine gives birth to orange critical trees producing
an orange spine which in turn gives birth to purple trees producing
a purple spine, and so on and so forth.

\begin{figure}[htpb]
\centering
\includegraphics[width=8cm,height=10cm]{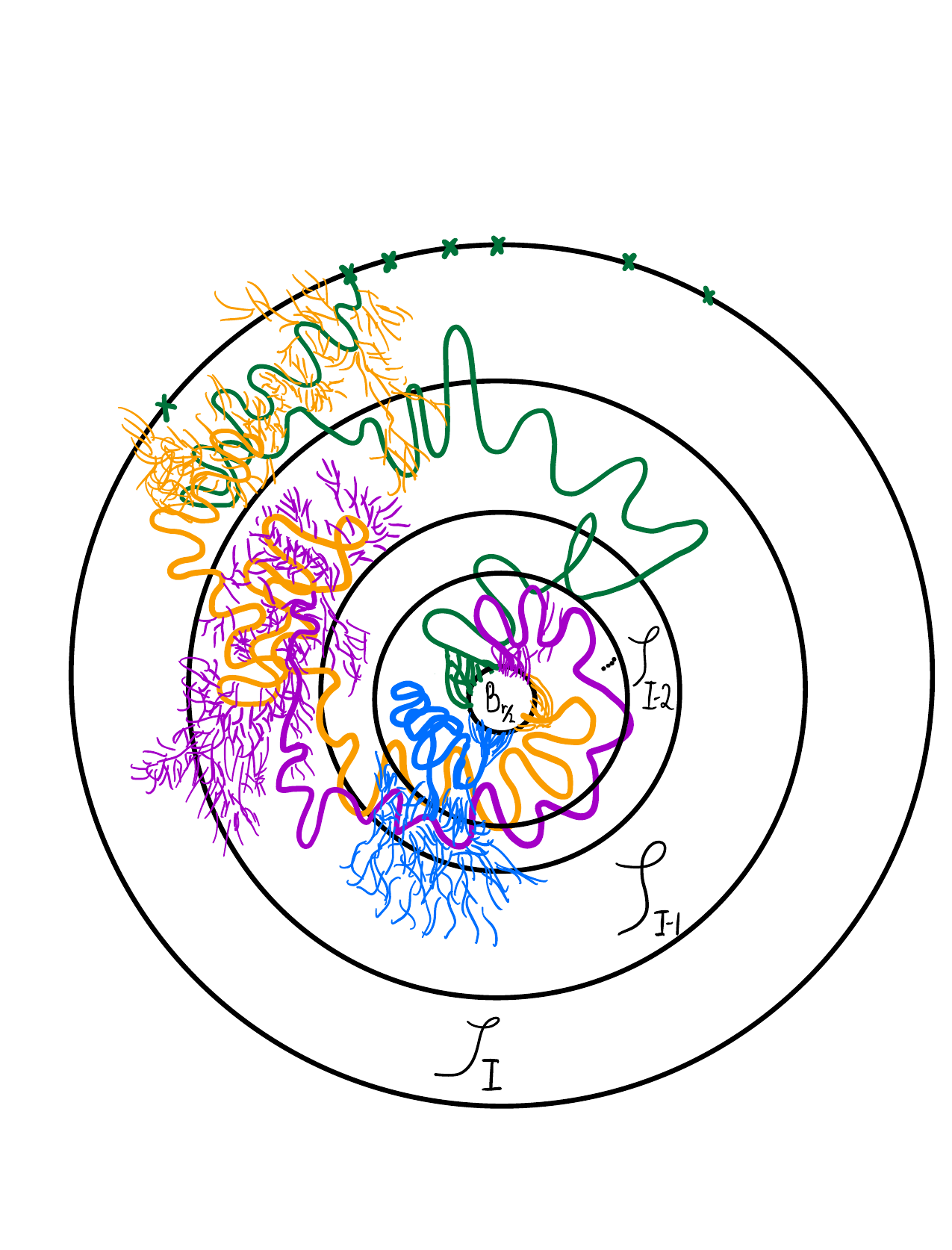}
\caption{The Many Spines.}\label{fig:dessin-d4}
\end{figure}

\paragraph{Proof.}
The formal proof follows very much the picture we just presented. 
Define the event $E_0:= \{|\eta_{R_I}|\ge cR_I^2\},$
with $c$ as in Lemma~\ref{lem.inf.etar}. Let
$$
E_1:= \big\{\sum_{u \in \eta_{R_I}}  
 |\eta^u_{r/2}|>0\big\}.
$$
Conditionally on $E_1$, we know by Proposition~\ref{prop.Zhu.spine} that there exists a path (or spine), 
which we denote by $\Gamma_1=(\Gamma_1(i),\ 0\le i \le |\Gamma_1|)$, 
emanating from one of the points in $\{S_u, u\in \eta_{R_I}\}$, 
and going up to $\partial B_{r/2}$. 
For $1\le j\le I$, and a path $\gamma$, we let  
$$
\tau_j = \tau_j(\gamma):= \inf\big\{ k\ge 0 : \gamma(k) \in \partial \mathcal S_j\big\}. $$
We then define $E_2$ as the event that one of the biased BRWs starting from the points in the path $\{\Gamma_1(i),\ 0\le i < \tau_{I-1}\}$, 
hits $\partial B_{r/2}$. Applying Proposition~\ref{prop.Zhu.spine} again, it means that 
on the event $E_1\cap E_2$ there exists a second spine $\Gamma_2$ emanating from one of the (neighbors of the) points in 
$\{\Gamma_1(i),\ 0\le i < \tau_{I-1}\}$, going up to $\partial B_{r/2}$. 
We can thus define $E_3$ as the event that one of the biased BRWs 
starting from the points in 
$\{\Gamma_1(i),\ \tau_{I-1}\le i < \tau_{I-2}\} 
\cup \{\Gamma_2(i),\ 0\le i < \tau_{I-2}\}$, 
hits $\partial B_{r/2}$. Continuing like this in all the 
shells $\mathcal S_I, \dots, \mathcal S_{I/2}$, we define inductively 
for each $i=1,\dots,I/2$, an event $E_i$, such that on $E_1\cap \dots\cap E_i$, there are $i$ spines $\Gamma_1,\dots,\Gamma_i$, starting respectively 
from a point in $\partial B_{R_I}, \partial \mathcal S_I, \dots, \partial \mathcal S_{I-i+2}$, and going up to $\partial B_{r/2}$. 
We claim that there exists a constant $\rho>0$, such that for each $i\le I/2$, 
\begin{equation}\label{hit.spine.claim}
\mathbb P(E_{i+1} \mid E_1\cap \dots \cap E_i) \ge \frac{\rho i}{I}. 
\end{equation}
Indeed, recall that by Corollary~\ref{cor.spine.Markov} the spines satisfy the strong Markov property. Therefore, for any $i$, 
$$\mathbb P(E_{i+1} \mid E_1\cap \dots \cap E_i) \ge 1 - \big(1-\inf_{x\in \partial B_{R_{I-i+1}}} r_i(x)\big)^i,$$
where for any $x\in \partial B_{R_{I-i+1}}$, we denote by $r_i(x)$ the probability that on a path $\Gamma$ starting from $x$, 
and sampled according to the measure $\mathbb P^x_{\partial B_{r/2}}$, one of the biased BRW starting from the points in $\{\Gamma(k),\ k\le \tau_{I-i}\}$, hits 
$\partial B_{r/2}$. 
Since for any $z\in \mathcal S_{I-i+1}$ the probability 
to hit $\partial B_{r/2}$ is of order $I^{-1}\cdot R_{I-i}^{-2}$ 
(recall that we assume $i\le I/2$), by Lemma~\ref{lem.hitting} (which holds as well for a biased BRW, see Remark~\ref{rem.lb.biased}), one has for some constant $c>0$, and any $\alpha>0$
$$r_i(x) \ge \Big\{1 - \big(1-\frac{c}{IR_{I-i}^2}\big)^{\alpha R_{I-i}^2}\Big\} \cdot \mathbb P_{\partial B_{r/2}}^x\big(\ell_\Gamma(\S_{I-i+1}) 
> \alpha R_{I-i}^2\big), 
$$
where for $j\ge 2$, we write $\ell_\gamma(\S_j)$ 
for the time spent on $\mathcal S_j$ by a path $\gamma$, before its hitting time of $B_{R_{j-1}}$. 
Thus, to conclude the proof of~\eqref{hit.spine.claim}, it suffices to show that for some $\alpha>0$, one has for any $j\ge 2$, and any 
$x\in \partial B_{R_j}$, 
\begin{equation}\label{elljGamma}
\mathbb P_{\partial B_{r/2}}^x\big(\ell_\Gamma(\S_j) > \alpha R_j^2\big) \ge \alpha. 
\end{equation}
Set now $\Lambda:=\partial B_{r/2}$ to simplify notation. One has by definition of $\mathbb P_\Lambda^x$, and using also \eqref{def.pLambda}, 
\begin{align*}
\mathbb P_\Lambda^x\big(\ell_\Gamma(\S_j)\le \alpha R_j^2\big) & =  \frac{\sum_{y\in \partial B_{R_{j-1}}} \left(\sum_{\gamma_1:x \to y}
\1\{\ell_\gamma(\S_j)\le \alpha R_j^2\} 
p_\Lambda(\gamma_1)\right)\cdot \left(\sum_{\gamma_2:y\to \Lambda} p_\Lambda(\gamma_2)\right)}
{\sum_{y\in \partial B_{R_{j-1}}} \left(\sum_{\gamma_1:x \to y} p_\Lambda(\gamma_1)\right)\cdot \left(\sum_{\gamma_2:y\to \Lambda} p_\Lambda(\gamma_2)\right)}\\
& = \frac{\sum_{y\in \partial B_{R_{j-1}}} \left(\sum_{\gamma_1:x \to y}
\1\{\ell_{\gamma_1}(\S_j)\le \alpha R_j^2\} p_\Lambda(\gamma_1)\right)\cdot 
\mathbb P_y(|\eta_{r/2}|>0)}
{\sum_{y\in \partial B_{R_{j-1}}} 
\left(\sum_{\gamma_1:x \to y} p_\Lambda(\gamma_1)\right)\cdot \mathbb P_y(|\eta_{r/2}|>0)}\\
& \le C \frac{\sum_{y\in \partial B_{R_{j-1}}} \sum_{\gamma_1:x \to y}
\1\{\ell_{\gamma_1}(\S_j)\le \alpha R_j^2\} p_\Lambda(\gamma_1)}
{\sum_{y\in \partial B_{R_{j-1}}}
\sum_{\gamma_1:x \to y} p_\Lambda(\gamma_1)}, 
\end{align*} 
for some constant $C>0$, using Lemma~\ref{lem.hitting} and Proposition~\ref{prop.hit.upper} for the last inequality. By \eqref{def.pLambda}, 
one has 
\begin{align*}
\sum_{y\in \partial B_{R_{j-1}}}\ \sum_{\substack{\gamma_1:x \to y \\ 
\ell_{\gamma_1}(\S_j)\le \alpha R_j^2}} p_\Lambda(\gamma_1)
 \le \sum_{y\in \partial B_{R_{j-1}}}  
\mathbf P_x\big(\ell_S(\S_j) \le \alpha R_j^2,\, S_{\tau_{j-1}}=y\big)=  \mathbf P_x\big(\ell_S(\S_j) \le \alpha R_j^2,\, \tau_{j-1}<\infty \big), 
\end{align*}
while using also Remark~\ref{rem.hit.adjoint}, we get that for some constant $c>0$, 
\begin{align*}
\sum_{y\in \partial B_{R_{j-1}}} \sum_{\gamma_1:x \to y} p_\Lambda(\gamma_1) 
\ge \sum_{y\in \partial B_{R_{j-1}}} \ \sum_{\substack{\gamma_1:x \to y \\ |\gamma_1|\le R_j^2}} p_\Lambda(\gamma_1) \ge c \mathbf P_x\big(\tau_{j-1}(S)\le R_j^2) \ge c^2. 
\end{align*}
Now for any $\varepsilon>0$, one can find $\alpha$ small enough, such that 
$$\mathbf P_x\big(\ell_S(\S_j) \le \alpha R_j^2,\, \tau_{j-1}<\infty \big)\le \mathbf P_x\big(\ell_S(\S_j) \le \alpha R_j^2\big)\le  \varepsilon.$$
Altogether this proves \eqref{elljGamma}, and thus also~\eqref{hit.spine.claim}. Using also Lemma~\ref{lem.inf.etar}, it follows that for some positive constants $c$ and $\kappa$, 
\begin{equation}\label{prob.Ei}
\mathbb P(E_0\cap \dots \cap E_{I/2}) \ge \exp(-\kappa I)\times\mathbb P(E_0) \ge \frac{c}{R_I^2} \exp(-\kappa I). 
\end{equation}
We observe finally that on the event in the probability above, 
the number of particles which hit $\partial B_{r/2}$ dominates the sum of $I/2$ independent random variables $X_1,\dots,X_{I/2}$ distributed as $|\eta_{r/2}|$, 
under the conditional law $\mathbb P_z(\cdot \mid |\eta_{r/2}|>0)$, 
for some $z\in \partial B_{R_{I/2}}$. 
However, for any such starting point $z$, one has using the computation made in the proof of Lemma~\ref{lem.hitting}, together 
with Proposition~\ref{prop.hit.upper},  
$$
\mathbb E_z\big[|\eta_{r/2}|\mid |\eta_{r/2}|> 0\big] \gtrsim I r^2, \quad\text{and}\quad 
\mathbb E_z\big[|\eta_{r/2}|^2\mid |\eta_{r/2}|>0\big] \lesssim r^4I^2.$$

Thus Paley--Zygmund's inequality \eqref{PZ1} (applied to the law of $X_1+\dots + X_{I/2}$) gives the existence of $\alpha>0$, such that
$$\mathbb P\big(X_1+\dots + X_{I/2} \ge \alpha I^2r^2\big) \ge \alpha. $$ 
In other words, we just have proved that 
$$\mathbb P\big(\sum_{u\in \eta_{R_I}} 
|\eta^u_{r/2}|\ge \alpha r^2 I^2\big)\ge \frac{c}{r^2}\exp(-\kappa' I), $$ 
for some positive constants $c$ and $\kappa'$. The proof is now almost finished. To conclude, note that 
$$\inf_{x\in B_{r/2}}\mathbb E_x\big[|\mathcal T(B_r)|\big] = 
\inf_{x\in B_{r/2}} \mathbf E_x[H_r] \gtrsim r^2,$$
and by definition, 
$$
\ell_{\T}(B_r) \ge \sum_{u \in \eta_{R_I} } 
\sum_{v \in \eta^u_{r/2} }  |\mathcal T^v(B_r)|. 
$$ 
Thus by an application of the weak law of large numbers, and by taking the constant $C$ in the definition of $I$ large enough, we deduce
$$
\mathbb P(\ell_{\T}(B_r)>t) \geq \frac{c}{r^2} \exp(- \kappa' I),$$
for some (possibly different) positive constants $c$ and $\kappa'$. This concludes the proof of the lower bound in Theorem~\ref{theo.main}, in dimension four. \hfill $\square$ 

\noindent{\bf Acknowledgements.} We would like to thank Ofer Zeitouni 
for many stimulating and enthusiastic discussions at 
an early stage of this work. The authors acknowledge support from the grant ANR-22-CE40-0012 (project Local).

\end{document}